\theoremstyle{plain}
\tikzset{join/.code=\tikzset{after node path={%
\ifx\tikzchainprevious\pgfutil@empty\else(\tikzchainprevious)%
edge[every join]#1(\tikzchaincurrent)\fi}}}
\tikzset{>=stealth',every on chain/.append style={join},
         every join/.style={->}}
\tikzset{
    >=stealth',
    punkt/.style={
           rectangle,
           rounded corners,
           draw=black, very thick,
           text width=6.5em,
           minimum height=2em,
           text centered},
    pil/.style={
           ->,
           thick,
           shorten <=2pt,
           shorten >=2pt,}
}
\newcommand{\bee}{\begin{enumerate}}
\newcommand{\eee}{\end{enumerate}}
\newcommand{\benn}{\begin{equation*}}
\newcommand{\eenn}{\end{equation*}}
\newcommand{\be}{\begin{equation}}
\newcommand{\ee}{\end{equation}}
\newcommand{\bean}{\begin{eqnarray}}
\newcommand{\eean}{\end{eqnarray}}
\newcommand{\bea}{\begin{eqnarray*}}
\newcommand{\eea}{\end{eqnarray*}}
\newcommand{\Z}{\mathbb{Z}}
\newcommand{\cF}{{\cal F}}
\newcommand{\op}[1]{\!\!\mathop{\rm ~#1}\nolimits}
\newcommand{\id}{\op{id}}
\newcommand{\cL}{{\cal L}}
\mathchardef\za="710B  
\mathchardef\zb="710C  
\mathchardef\zg="710D  
\mathchardef\zd="710E  
\mathchardef\zve="710F 
\mathchardef\zz="7110  
\mathchardef\zh="7111  
\mathchardef\zy="7112 
\mathchardef\zi="7113  
\mathchardef\zk="7114  
\mathchardef\zl="7115  
\mathchardef\zm="7116  
\mathchardef\zn="7117  
\mathchardef\zx="7118  
\mathchardef\zp="7119  
\mathchardef\zr="711A  
\mathchardef\zs="711B  
\mathchardef\zt="711C  
\mathchardef\zu="711D  
\mathchardef\zf="711E 
\mathchardef\zq="711F  
\mathchardef\zc="7120  
\mathchardef\zw="7121  
\mathchardef\ze="7122  
\mathchardef\zvy="7123  
\mathchardef\zvw="7124  
\mathchardef\zvr="7125 
\mathchardef\zvs="7126 
\mathchardef\zvf="7127  
\mathchardef\zG="7000  
\mathchardef\zD="7001  
\mathchardef\zY="7002  
\mathchardef\zL="7003  
\mathchardef\zX="7004  
\mathchardef\zP="7005  
\mathchardef\zS="7006  
\mathchardef\zU="7007  
\mathchardef\zF="7008  
\mathchardef\zW="700A  
 \newcommand{\cH}{{\cal H}}
 \newcommand{\cC}{{\cal C}}
 \newcommand{\cD}{{\cal D}}
 \newcommand{\cQ}{{\cal Q}}
 \newcommand{\cI}{{\cal I}}
 \newcommand{\cG}{{\cal G}}
 \newcommand{\cJ}{{\cal J}}
\newtheorem{rem}{Remark}
\newtheorem{theo}{Theorem}
\newtheorem{prop}{Proposition}
\newtheorem{lem}{Lemma}
\newtheorem{cor}{Corollary}
\newtheorem{defi}{Definition}
\newcommand{\Hom}{\mathrm{Hom}}
\begin{document}

\title{Comparison theorems for Kan, faintly universal and strongly universal derived functors}
\author{Alisa Govzmann, Damjan Pi\v{s}talo, and Norbert Poncin}
\maketitle

\begin{abstract} We distinguish between faint, weak, strong and strict localizations of categories at morphism families and show that this framework captures the different types of derived functors that are considered in the literature. More precisely, we show that Kan and faint derived functors coincide when we use the classical Kan homotopy category, and when we use the Quillen homotopy category, Kan and strong derived functors coincide. Our comparison results are based on the fact that the Kan homotopy category is a weak localization and that the Quillen homotopy category is a strict localization. \end{abstract}

\small{\vspace{2mm} \noindent {\bf MSC 2020}: 18E35, 18N40, 14A30 \medskip

\noindent{\bf Keywords}: Localization, model category, homotopy category, derived functor}

\tableofcontents

\thispagestyle{empty}

\section{Introduction}

There are a number of definitions of a localization of a category at a family of morphisms, of definitions of a model category, the homotopy category and the derived functor in the literature, and it is not easy to navigate this jungle of different concepts. In the present text we unravel this tangle.\medskip

In Section \ref{CatWeq} we define and liken four types of more or less tight localization of a category, which turn out to provide the proper framework for the study and comparison of various derived functors in model categories. We refer to these localizations at a distinguished class of morphisms as the faint, weak, strong and strict localizations.\medskip

To define derived functors on homotopy categories most sources use the strong localization of the model category at its weak equivalences given by the Quillen homotopy category (the homotopy category for short) \cite{DS}, \cite{Hir}, \cite{Ho99}; others work with the faint localization given by the Kan homotopy category (the classical homotopy category to avoid ambiguity) \cite{JL}, nLab. Actually the Kan and Quillen homotopy categories are an example of a weak (hence a faint) and a strict (hence a strong) localization, respectively. Although the strictness of the Quillen localization implies immediately that it is also a weak localization and therefore necessarily equivalent to the Kan localization, this property of the homotopy category is to the best of our knowledge not mentioned in the literature. We deal with the preceding aspects in Section \ref{HoCats}.\medskip

In Section \ref{DerFuns} we define derived functors as Kan extensions along localization functors (K derived functors) and as factorizations through faint and strong localizations of the source category at its weak equivalences (F and S derived functors). K derived functors can be defined along every functor, in particular along the localization functor of the Quillen homotopy category \cite{DS}, \cite{Hir}, \cite{Quill} and the localization functor of the Kan homotopy category, nLab. F derived functors will be defined for the faint localization functor of the latter \cite{JL}, nLab, and S derived functors for the strong localization functor of the former \cite{Ho99}. The reader will observe that the fact that these localization functors are actually weak and strict, respectively, does not play any role in the derived functors' existence and uniqueness results and he will notice that the definitions of K, F and S derived functors differ in particular by the strength of their `commutation' with the localization functor considered. We close Section \ref{DerFuns} by proving that in the case of the Kan homotopy category K and F derived functors coincide and that in the case of the Quillen homotopy category K and S derived functors are the same. While the weakness of the Kan localization has not been exploited so far, it is important for proving the previous comparison theorem. Although nothing is really new, neither of the comparison theorems seems to exist in the literature.\medskip

In the final section \ref{FutDir}, we briefly describe the context that led to the need to compare K, F, and S derived functors.\medskip

The proven comparison theorems should be of interest to any research problem that requires results from sources using different definitions of derived functors. Applications can be expected for instance in higher geometry and physics. Indeed, functors and their derivatives are of importance in environments where there is no good notion of point, e.g., in supergeometry and in algebraic geometry: higher supergeometry \cite{ Berezinian, Z2nManifolds, LocalForms,Integration}, homotopical algebraic geometry \cite{TV05, TV08} and its generalisation that goes under the name of homotopical algebraic geometry over differential operators, are completely based on the functor of points approach \cite{Schwarz, Linear, HAC}.\medskip

{\bf Warning}. In this text we identify objects that are connected by a unique or canonical isomorphism.

\section{Strict, strong, weak and faint localizations}\label{CatWeq}

If one looks at the details of the various definitions of derived functors of Quillen functors, one sees that the authors essentially use two different definitions of localization of a category at a family of morphisms. We will refer to the notion of localization that is used in \cite{DS}, \cite{Hir} and \cite{Ho99} as the {\it strong localization} and to the notion of localization that is used in \cite{JL} as a {\it faint localization}. However:

\begin{rem} There are two additional types of localization, a specific strong localization (called strict localization) and a specific faint localization (called weak localization), which may represent better localization concepts, although their distinctive specificity is not needed in the definitions of derived functors -- on the other hand it is of fundamental importance in the proofs of the comparison theorems of these definitions.
\end{rem}

\begin{defi}\label{Faint}
A {\bf faint localization} of a category $\tt C$ at a family $W$ of morphisms (called weak equivalences) is a category ${\tt C}[W^{-1}]$ together with a functor $L:{\tt C}\to {\tt C}[W^{-1}]$ that sends weak equivalences to isomorphisms, such that the pair $({\tt C}[W^{-1}]),L)$ is {\it faintly universal} in the sense that:

\begin{enumerate}
  \item[\emph{(L1)}] If $({\tt D},F)$ is another such pair, there exists a functor $\tilde{F}:{\tt C}[W^{-1}]\to{\tt D}$ such that the resulting triangle commutes (not on the nose but) up to natural isomorphism $\zh:F\stackrel{\cong}{\Rightarrow}\tilde{F}\circ L\,$.
      \begin{equation} \begin{tikzpicture}
 \matrix (m) [matrix of math nodes, row sep=3em, column sep=3em]
   {  {\tt C}  & {\tt C}[W^{-1}]  \\
       & {\tt D}  \\ };
 \path[->]
 (m-1-1) edge node[left] {\small{$F$}} (m-2-2)
 (m-1-1) edge node[above] {\small{$L$}} (m-1-2)
 (m-1-2) edge [->, dashed] node[auto] {\small{$\tilde{F}$}} (m-2-2);
 \draw[-latex] node[auto]{$\;\;\zh$} node[above,rotate=45]{$\;\;\;\;\stackrel{\cong}{\Rightarrow}$};
\end{tikzpicture}
\end{equation}
  \item [\emph{(L2)}] The pair $(\tilde{F},\zh)$ is unique up to (!) unique natural isomorphism, i.e., if $(\tilde{F}',\zh')$ is another such pair, there is a unique natural isomorphism $\ze:\tilde{F}\stackrel{\cong}{\Rightarrow} \tilde{F}'$ such that the obvious diagram commutes on the nose: $(\ze\star L)\circ\zh=\zh'\,,$ where $\star$ denotes whiskering.
\end{enumerate}
\end{defi}

\begin{defi}\label{Weak}
A {\bf weak localization} of a category $\tt C$ at a family $W$ of morphisms is a category ${\tt C}[W^{-1}]$ together with a functor $L:{\tt C}\to {\tt C}[W^{-1}]$ that sends weak equivalences to isomorphisms, such that the pair $({\tt C}[W^{-1}]),L)$ is {\it weakly universal} in the sense that:

\begin{enumerate}
  \item[\emph{(L1)}] If $({\tt D},F)$ is another such pair, there exists a functor $\tilde{F}:{\tt C}[W^{-1}]\to{\tt D}$ such that the resulting triangle commutes up to natural isomorphism $\zh:F\stackrel{\cong}{\Rightarrow}\tilde{F}\circ L\,$.
  \item[\emph{(L2')}] The functor $$-\circ L:{\tt Fun(C}[W^{-1}],{\tt D})\to {\tt Fun}{\tt (C,D)}$$ from functors out of ${\tt C}[W^{-1}]$ to functors out of $\tt C$ is fully faithful for every category $\tt D\,$.
\end{enumerate}
\end{defi}

\begin{defi}\label{Strong}
A {\bf strong localization} of a category $\tt C$ at a family $W$ of morphisms is a category ${\tt C}[[W^{-1}]]$ together with a functor $L:{\tt C}\to {\tt C}[[W^{-1}]]$ that sends weak equivalences to isomorphisms, such that the pair $({\tt C}[[W^{-1}]]),L)$ is {\it strongly universal} in the sense that:\medskip

\begin{enumerate}
\item[\emph{(L1')}]If $({\tt D},F)$ is another such pair, there exists a unique functor $\tilde{F}:{\tt C}[[W^{-1}]]\to{\tt D}$ such that the resulting triangle commutes on the nose: $F=\tilde{F}\circ L\,$.
\end{enumerate}
\end{defi}

\begin{defi}\label{Strict}
A {\bf strict localization} of a category $\tt C$ at a family $W$ of morphisms is a category ${\tt C}[[W^{-1}]]$ together with a functor $L:{\tt C}\to {\tt C}[[W^{-1}]]$ that sends weak equivalences to isomorphisms, such that the pair $({\tt C}[[W^{-1}]]),L)$ is {\it strictly universal} in the sense that:

\begin{enumerate}
  \item[\emph{(L1')}] If $({\tt D},F)$ is another such pair, there exists a unique functor $\tilde{F}:{\tt C}[[W^{-1}]]\to{\tt D}$ such that the resulting triangle commutes on the nose: $F=\tilde{F}\circ L\,$.
  \item[\emph{(L2')}] The functor $$-\circ L:{\tt Fun(C}[[W^{-1}]],{\tt D})\to {\tt Fun}{\tt (C,D)}$$ from functors out of ${\tt C}[[W^{-1}]]$ to functors out of $\tt C$ is fully faithful for every category $\tt D\,$.
\end{enumerate}
\end{defi}

As mentioned before, the concept of faint (resp., strong) localization is used in \cite{JL} (resp., \cite{DS}, \cite{Hir}, \cite{Ho99}). The concept of weak localization can be found for instance in \cite{KS}.\medskip

If $\tt CW$ denotes either ${\tt C}[W^{-1}]$ or ${\tt C}[[W^{-1}]]$ the fully faithfulness condition in Definitions \ref{Weak} and \ref{Strict} means that for every functors $F,G\in\tt Fun(CW,D)$ the map \be\label{FulFai}(-\circ L)_{F,G}:\op{Hom}_{\tt Fun(CW,D)}(F,G)\ni \zz\mapsto \zz\star L\in\op{Hom}_{\tt Fun(C,D)}(F\circ L,G\circ L)\ee is bijective.

\begin{prop}\label{SW}
Every strict localization is a weak localization.
\end{prop}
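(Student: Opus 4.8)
The plan is to unwind the definitions and check that the two defining clauses of a strict localization (L1') and (L2') together imply the two defining clauses of a weak localization (L1) and (L2'). Since (L2') appears verbatim in both Definition \ref{Strict} and Definition \ref{Weak}, the only real content is to deduce (L1) — existence of an extension $\tilde F$ together with a natural \emph{isomorphism} $\zh\colon F\stackrel{\cong}{\Rightarrow}\tilde F\circ L$ — from (L1'), which gives an extension with an honest equality $F=\tilde F\circ L$. This is immediate: given $({\tt D},F)$, let $\tilde F$ be the unique functor provided by (L1') with $\tilde F\circ L=F$, and take $\zh=\mathrm{id}_F$, viewed as a natural isomorphism $F\Rightarrow\tilde F\circ L$. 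So the verification of (L1) is essentially trivial.

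The one subtlety worth spelling out is that a weak localization, being a \emph{faint}-type localization, is required to satisfy (L2) — uniqueness of the pair $(\tilde F,\zh)$ up to unique natural isomorphism — as well. However, Definition \ref{Weak} replaces (L2) by the stronger-looking (L2'), so formally a weak localization is the data of $({\tt C}[W^{-1}],L)$ satisfying (L1) and (L2') only; one does not separately need to check (L2). (If one did wish to, (L2) would follow from (L2') by the standard argument: fully faithfulness of $-\circ L$ turns the equation $(\ze\star L)\circ\zh=\zh'$ into an equation $\ze\star L=\zh'\circ\zh^{-1}$ in $\op{Hom}_{\tt Fun(C,D)}(\tilde F\circ L,\tilde F'\circ L)$, which by bijectivity of \eqref{FulFai} has a unique preimage $\ze$, and $\ze$ is automatically a natural isomorphism because applying $-\circ L$ to $\ze$ and to its would-be inverse, then using faithfulness, shows $\ze$ is invertible.)

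I therefore expect there to be essentially no obstacle: the proposition is a direct consequence of the fact that an on-the-nose commuting triangle is in particular a triangle commuting up to the identity natural isomorphism, together with the observation that the fully-faithfulness clause (L2') is literally shared by the two definitions. The proof will be one or two sentences. The only thing to be careful about is to phrase it so that it is clear \emph{which} clauses are being matched — (L1') $\Rightarrow$ (L1) via $\zh=\mathrm{id}$, and (L2') $=$ (L2') — and, if desired, to note for completeness that (L2) is also satisfied, as sketched above.
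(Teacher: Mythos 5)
Your argument is correct and is exactly what the paper has in mind: the paper's proof is simply ``Obvious,'' and the intended unwinding is precisely that (L1') gives (L1) with $\zh=\mathrm{id}_F$ while (L2') is shared verbatim between Definitions \ref{Strict} and \ref{Weak}.
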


\begin{proof}
Obvious.
\end{proof}

\begin{prop}\label{WFSS}
Every weak (resp., strict) localization is a faint (resp., strong) localization.
\end{prop}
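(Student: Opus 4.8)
The plan is to prove the two implications separately, the second of which is essentially a tautology. For the implication \textbf{strict $\Rightarrow$ strong} there is nothing to do: the axiom (L1') that defines a strong localization is literally one of the two axioms, (L1') and (L2'), that define a strict localization, so a strict localization is a strong localization simply by forgetting (L2'). The real content is the implication \textbf{weak $\Rightarrow$ faint}, and I will concentrate on that.

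So let $L:{\tt C}\to{\tt C}[W^{-1}]$ be a weak localization, i.e.\ $L$ sends weak equivalences to isomorphisms and the pair $({\tt C}[W^{-1}],L)$ satisfies (L1) and (L2'). Axiom (L1) is shared verbatim with the notion of faint localization, so only (L2) has to be derived. Fix a pair $({\tt D},F)$ with $F$ sending weak equivalences to isomorphisms, and suppose $(\tilde F,\zh)$ and $(\tilde F',\zh')$ are two pairs produced by (L1), so that $\zh:F\stackrel{\cong}{\Rightarrow}\tilde F\circ L$ and $\zh':F\stackrel{\cong}{\Rightarrow}\tilde F'\circ L$ are natural isomorphisms. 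Then the vertical composite $\zh'\circ\zh^{-1}$ is a natural isomorphism from $\tilde F\circ L$ to $\tilde F'\circ L$. Since (L2') says that $-\circ L:{\tt Fun}({\tt C}[W^{-1}],{\tt D})\to{\tt Fun}({\tt C},{\tt D})$ is fully faithful, there is a \emph{unique} natural transformation $\ze:\tilde F\Rightarrow\tilde F'$ with $\ze\star L=\zh'\circ\zh^{-1}$; this $\ze$ will be the natural transformation required by (L2).

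It then remains to verify the three assertions packaged in (L2). First, compatibility is the one-line computation $(\ze\star L)\circ\zh=(\zh'\circ\zh^{-1})\circ\zh=\zh'$. Second, $\ze$ is not merely a natural transformation but a natural isomorphism: a fully faithful functor reflects isomorphisms, so applying this to $-\circ L$ --- viewed as an ordinary functor between the categories ${\tt Fun}({\tt C}[W^{-1}],{\tt D})$ and ${\tt Fun}({\tt C},{\tt D})$, whose isomorphisms are exactly the natural isomorphisms --- shows that $\ze$ is invertible because $\ze\star L=\zh'\circ\zh^{-1}$ is. Third, uniqueness: if $\ze'':\tilde F\Rightarrow\tilde F'$ also satisfies $(\ze''\star L)\circ\zh=\zh'$, then $\ze''\star L=\zh'\circ\zh^{-1}=\ze\star L$, hence $\ze''=\ze$ by the faithfulness half of (L2'). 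Thus (L2) holds and $L$ is a faint localization, which together with the first paragraph proves the proposition.

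I do not expect a genuine obstacle: the only points demanding care are the $2$-categorical bookkeeping --- the directions of the isomorphisms $\zh,\zh'$ and the interchange of whiskering with vertical composition of natural transformations --- and the appeal to the standard fact that fully faithful functors reflect isomorphisms, which is exactly what upgrades the natural transformation $\ze$ supplied by (L2') to the natural isomorphism demanded by (L2).
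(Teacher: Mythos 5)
Your proof is correct and takes essentially the same approach as the paper: both argue that the strict-implies-strong direction is a tautology and then, for weak-implies-faint, apply full faithfulness of $-\circ L$ to the natural isomorphism $\zh'\circ\zh^{-1}$ to obtain the required unique $\ze$. The only cosmetic difference is that you cite the standard lemma that fully faithful functors reflect isomorphisms, whereas the paper unpacks that lemma in place (constructing $\zvy$ from $\zh\circ\zh'^{-1}$ and checking $\zvy\circ\zy=\id_{\tilde F}$ and $\zy\circ\zvy=\id_{\tilde F'}$ via the interchange law).
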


\begin{proof}
The second statement is obvious. Let now $({\tt C}[W^{-1}],L)$ be a weak localization and let $({\tt D},F)\,,$ $(\tilde{F},\zh)$ and $(\tilde{F}',\zh')$ be as in Definition \ref{Faint}. We must show that there is a unique natural isomorphism $\ze:\tilde{F}\stackrel{\cong}{\Rightarrow}\tilde{F}'$ such that $\ze\star L=\zh'\circ\zh^{-1}\,.$ Since $\zh'\circ\zh^{-1}:\tilde{F}\circ L\stackrel{\cong}{\Rightarrow}\tilde{F}'\circ L$ and similarly for $\zh\circ\zh'^{-1}\,,$ it follows from the full faithfulness \eqref{FulFai} of $-\circ L$ that there exists a unique natural transformation $\zy:\tilde{F}\Rightarrow\tilde{F}'$ such that $\zy\star L=\zh'\circ\zh^{-1}$ and a unique natural transformation $\zvy:\tilde{F}'\Rightarrow\tilde{F}$ such that $\zvy\star L=\zh\circ\zh'^{-1}\,.$ Hence the bijection $-\circ L$ maps $\zvy\circ\zy:\tilde{F}\Rightarrow\tilde{F}$ to $$(\zvy\circ\zy)\star L=(\zvy\star L)\circ(\zy\star L)=\id_{\tilde{F}\circ L}=\id_{\tilde F}\star L\;,$$ so that $\zvy\circ\zy=\id_{\tilde{F}}\,.$ Similarly one gets that $\zy\circ\zvy=\id_{\tilde{F}'}\,,$ which completes the proof.
\end{proof}

Let $[{\tt C,D}]_W$ be the full subcategory of those functors of $\tt Fun(C,D)$ that send the morphisms in $W$ to isomorphisms in $\tt D\,.$ Observe that Condition (L1') means that any functor of $$[{\tt C,D}]_W$$ factors uniquely through $L\,,$ and that in view of \eqref{FulFai} Condition (L2') means that any natural transformation $$\zx:F\circ L\Rightarrow G\circ L$$ in $$\op{Hom}_{[{\tt C,D}]_W}(F\circ L, G\circ L)$$ factors uniquely through $L\,.$ Hence a strong localization needs not be strict. Noticing that Condition (L2) means that any natural isomorphism of the type $$\zh'\circ\zh^{-1}:\tilde{F}\circ L\stackrel{\cong}{\Rightarrow}\tilde{F}'\circ L$$ factors uniquely through $L$ as isomorphism, we see that a faint localization needs not be weak.

\begin{prop}\label{WFSSNSC} Let $\tt C$ be a category and $W\!$ a family of morphisms of $\,\tt C\,.$

\begin{enumerate}
\item A faint localization $({\tt C}[W^{-1}],L)$ is a weak localization if and only if for every $\tt D$ the functor \be\label{ECF}-\circ L:{\tt Fun(C}[W^{-1}],{\tt D})\to [{\tt C,D}]_W\ee is an equivalence of categories.
\item A strong localization $({\tt C}[[W^{-1}]],L)$ is a strict localization if and only if for every $\tt D$ the functor \be\label{ICF}-\circ L:{\tt Fun(C}[[W^{-1}]],{\tt D})\to [{\tt C,D}]_W\ee is an isomorphism of categories.
\end{enumerate}
\end{prop}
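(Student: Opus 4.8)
The plan is to match each clause of the four universal properties to a categorical property of the comparison functor $-\circ L$ from \eqref{FulFai}, and then to read both statements off the standard characterizations ``equivalence of categories $=$ fully faithful $+$ essentially surjective'' and ``isomorphism of categories $=$ fully faithful $+$ bijective on objects''. The starting observation is that $-\circ L$ corestricts to $[{\tt C,D}]_W$: since $L$ carries the morphisms of $W$ to isomorphisms and every functor preserves isomorphisms, $F\circ L$ belongs to $[{\tt C,D}]_W$ for each object $F$, and $\zz\star L$ is a morphism between such functors for each natural transformation $\zz\,$. As $[{\tt C,D}]_W$ is a \emph{full} subcategory of $\tt Fun(C,D)$ containing the image of $-\circ L$, the functor $-\circ L:\tt Fun(CW,D)\to Fun(C,D)$ is fully faithful if and only if the corestricted functor \eqref{ECF} (resp.\ \eqref{ICF}) is fully faithful; in other words, Condition (L2') is equivalent to the full faithfulness of \eqref{ECF} (resp.\ \eqref{ICF}) for every $\tt D\,$.

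For item 1, the point is that for a faint localization $({\tt C}[W^{-1}],L)$ the functor \eqref{ECF} is \emph{automatically} essentially surjective, for every $\tt D$: given $F\in[{\tt C,D}]_W$, the pair $({\tt D},F)$ is of the type considered in Definition \ref{Faint}, so (L1) produces a functor $\tilde F$ and a natural isomorphism $F\cong\tilde F\circ L=(-\circ L)(\tilde F)$, whence $F$ lies in the essential image. Therefore, if a faint localization is moreover weak it satisfies (L2'), so \eqref{ECF} is fully faithful, and being essentially surjective as well it is an equivalence; conversely, if \eqref{ECF} is an equivalence it is in particular fully faithful, hence (L2') holds and --- (L1) being already part of faintness --- the localization is weak.

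For item 2 the same template runs with ``essentially surjective'' replaced by ``bijective on objects'' and ``equivalence'' by ``isomorphism of categories''. Indeed, for a strong localization Condition (L1') asserts precisely that every $F\in[{\tt C,D}]_W$ equals $\tilde F\circ L$ for a \emph{unique} $\tilde F$, i.e.\ that the equation $(-\circ L)(\tilde F)=F$ has a unique solution for each object $F$ of $[{\tt C,D}]_W$, which is exactly bijectivity of \eqref{ICF} on objects. Hence a strong localization that is also strict satisfies (L2'), so \eqref{ICF} is fully faithful, and together with bijectivity on objects it is an isomorphism of categories; conversely an isomorphism of categories is fully faithful, so (L2') holds and the localization is strict. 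The one step that deserves a line of justification --- and is as close as this proof gets to an obstacle --- is the corestriction argument of the first paragraph, namely that full faithfulness is unaffected by passing to the full subcategory $[{\tt C,D}]_W$; everything else is the bookkeeping of pairing (L1) with essential surjectivity, (L1') with bijectivity on objects, and (L2') with full faithfulness, together with the two textbook facts about equivalences and isomorphisms of categories.
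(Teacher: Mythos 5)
Your proof is correct and takes essentially the same approach as the paper: in each item you identify (L1) or (L1') as essential surjectivity or bijectivity on objects, identify (L2') as full faithfulness, and invoke the standard characterization of an equivalence, respectively an isomorphism, of categories. The only (cosmetic) difference is in item 2, where you use the characterization ``isomorphism $=$ fully faithful $+$ bijective on objects'' rather than the paper's ``bijective on objects and on morphisms''; yours meshes a bit more directly with (L2'), since that condition literally says $-\circ L$ is fully faithful, and spares the implicit step of converting bijectivity on morphisms to full faithfulness.
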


\begin{proof}
Like every functor, the functor \eqref{ECF} yields an equivalence of categories if and only if it is fully faithful and essentially surjective. As we are dealing with a faint localization every object $F\in[{\tt C,D}]_W$ is naturally isomorphic to some $\tilde{F}\circ L\,,$ so that $-\circ L$ is essentially surjective. Hence it is an equivalence of categories if and only if it is fully faithful, i.e., if and only if it is fully faithful as $\tt Fun(C,D)$-valued functor, i.e., if and only if the localization considered is a weak localization.\medskip

The functor \eqref{ICF} yields an isomorphism of categories if and only if it is bijective on objects and on morphisms. Since the localization considered is strong, the functor $-\circ L$ is bijective on objects. Hence it is an isomorphism of categories if and only if the localization is strict.
\end{proof}

\begin{prop}\label{UEUI}
If a faint (resp., strong) localization exists it is unique up to equivalence of categories (resp., up to unique isomorphism of categories). Because of Proposition \ref{WFSS}, this is especially true for a weak (resp., strict) localization.
\end{prop}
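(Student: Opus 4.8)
The plan is to prove the two uniqueness statements — one for faint localizations, one for strong localizations — by exhibiting the comparison functors supplied by the respective universal properties and checking they are mutually inverse. Since Proposition \ref{WFSS} embeds weak into faint and strict into strong, the final sentence follows for free, so I will concentrate on the faint and strong cases.

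For the faint case: suppose $({\tt C}[W^{-1}],L)$ and $({\tt C}[W^{-1}]',L')$ are two faint localizations of $\tt C$ at $W$. Applying (L1) of the first to the pair $({\tt C}[W^{-1}]',L')$ yields a functor $\tilde{L}':{\tt C}[W^{-1}]\to{\tt C}[W^{-1}]'$ and a natural isomorphism $L'\cong\tilde{L}'\circ L$; symmetrically, (L1) of the second yields $\tilde{L}:{\tt C}[W^{-1}]'\to{\tt C}[W^{-1}]$ with $L\cong\tilde{L}\circ L$. Composing, $\tilde{L}\circ\tilde{L}':{\tt C}[W^{-1}]\to{\tt C}[W^{-1}]$ together with the composite natural isomorphism $L\cong(\tilde{L}\circ\tilde{L}')\circ L$ is a candidate for the factorization of the pair $({\tt C}[W^{-1}],L)$ through itself; but so is $(\id,\id_L)$. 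By the uniqueness clause (L2) applied to the localization $({\tt C}[W^{-1}],L)$ viewed against the pair $({\tt C}[W^{-1}],L)$ itself, there is a (unique) natural isomorphism $\tilde{L}\circ\tilde{L}'\cong\id_{{\tt C}[W^{-1}]}$. Symmetrically $\tilde{L}'\circ\tilde{L}\cong\id_{{\tt C}[W^{-1}]'}$, so $\tilde{L}'$ is an equivalence of categories and the two faint localizations are equivalent.

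For the strong case: with $({\tt C}[[W^{-1}]],L)$ and $({\tt C}[[W^{-1}]]',L')$ two strong localizations, (L1') produces unique functors $\tilde{L}':{\tt C}[[W^{-1}]]\to{\tt C}[[W^{-1}]]'$ with $L'=\tilde{L}'\circ L$ on the nose, and $\tilde{L}:{\tt C}[[W^{-1}]]'\to{\tt C}[[W^{-1}]]$ with $L=\tilde{L}\circ L$. Then $\tilde{L}\circ\tilde{L}'$ and $\id_{{\tt C}[[W^{-1}]]}$ are both functors satisfying $F=(-)\circ L$ for $F=L$; the uniqueness in (L1') — applied to the strong localization $({\tt C}[[W^{-1}]],L)$ against the pair $({\tt C}[[W^{-1}]],L)$ — forces $\tilde{L}\circ\tilde{L}'=\id_{{\tt C}[[W^{-1}]]}$, and symmetrically $\tilde{L}'\circ\tilde{L}=\id_{{\tt C}[[W^{-1}]]'}$. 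Hence $\tilde{L}'$ is an isomorphism of categories; its uniqueness as a functor compatible with the localization functors gives uniqueness of this isomorphism. Finally, by Proposition \ref{WFSS} a weak localization is in particular a faint one and a strict localization is in particular a strong one, so the same conclusions apply to them.

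The only subtlety — and the step I would be most careful about — is the self-application of the uniqueness clauses: one must legitimately feed the localization pair $({\tt C}[W^{-1}],L)$ (resp.\ $({\tt C}[[W^{-1}]],L)$) into its own universal property in the role of the test pair $({\tt D},F)$, noting that $L$ indeed sends $W$ to isomorphisms so it qualifies, and then observe that both $\tilde{L}\circ\tilde{L}'$ (with the assembled coherence isomorphism) and the identity (with the identity coherence) are valid factorizations, whence (L2) resp.\ (L1') identifies them. In the faint case one should also record that $\ze\star L$ being an isomorphism does not a priori force $\ze$ to be an isomorphism in an arbitrary category of functors, but here $\ze$ arises symmetrically with an inverse produced the same way, so the standard two-sided-inverse argument (exactly as in the proof of Proposition \ref{WFSS}) closes the gap; everything else is formal diagram chasing with whiskering.
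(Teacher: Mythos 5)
Your proof is correct and follows essentially the same route as the paper: use each universal property to obtain comparison functors, compose them, and apply the uniqueness clause of each localization to itself (against the identity factorization) to conclude that the composites are naturally isomorphic to (resp.\ equal to) the identity. The hedge in your final paragraph is unnecessary, though: axiom (L2) as stated already produces a natural \emph{isomorphism} $\ze$, not merely a natural transformation whose whiskering with $L$ is invertible, so there is no gap requiring the two-sided-inverse argument from Proposition \ref{WFSS}.
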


\begin{proof}
Let $({\tt C}[W^{-1}],L)$ and $({\tt C}[W^{-1}]',L')$ be two faint localizations of $\tt C$ at $W\,.$ Using the faint universality of the first pair with respect to the second, we get a functor $$\tilde{L}':{\tt C}[W^{-1}]\to{\tt C}[W^{-1}]'$$ and a natural isomorphism $\zh':L'\stackrel{\cong}{\Rightarrow}\tilde{L}'\circ L\;.$ Dually we obtain a functor $$\tilde{L}:{\tt C}[W^{-1}]'\to{\tt C}[W^{-1}]$$ and a natural isomorphism \be\label{LInd1}\zh:L\stackrel{\cong}{\Rightarrow}\tilde{L}\circ L'\;.\ee Both pairs $(\tilde{L}',\zh')$ and $(\tilde{L},\zh)$ are unique up to unique isomorphism. As the resulting functor $$\tilde{L}\circ\tilde{L}':{\tt C}[W^{-1}]\to {\tt C}[W^{-1}]$$ and natural isomorphism $$( \tilde{L}\star\zh')\circ\zh:L\stackrel{\cong}{\Rightarrow}\tilde{L}\circ\tilde{L}'\circ L$$ are unique up to unique natural isomorphism due to the faint universality of the first pair with respect to itself, and as the identity functor of ${\tt C}[W^{-1}]$ and the identity natural isomorphism of $L$ are a second pair of this type, there exists a unique natural isomorphism \be\label{LInd2}\tilde{L}\circ\tilde{L}'\stackrel{\cong}{\Rightarrow}\id_{{\tt C}[W^{-1}]}\ee such that the obvious diagram commutes. Dually we get a unique natural isomorphism \be\label{LInd3}\tilde{L}'\circ\tilde{L}\stackrel{\cong}{\Rightarrow}\id_{{\tt C}[W^{-1}]'}\;.\ee

In the case of the strong localization the proof is the same, but all natural isomorphisms are equalities and the inverse categorical isomorphisms $\tilde{L}'$ and $\tilde{L}$ are unique.
\end{proof}

\section{Weak and strict homotopy categories}\label{HoCats}

\subsection{Various replacements in a model category}\label{Rep}

We assume that the reader is familiar with {\it model categories}. Essentially a model category is a category which comes equipped with three classes of morphisms: weak equivalences, fibrations and cofibrations. A fundamental concept is homotopies between two maps between the same objects. Whitehead's theorem states that a weak equivalence between fibrant-cofibrant objects can be inverted up to homotopy and more precisely that a map between fibrant-cofibrant objects is a weak equivalence if and only if it is a homotopy equivalence.\medskip

Recall that a model category admits a cofibration - trivial fibration factorization $(a,b)$ ({\small Cof - TrivFib} factorization) and a trivial cofibration - fibration factorization $(a',b')$ ({\small TrivCof - Fib} factorization) \cite{JL}. Often this system of factorizations is required to be a functorial factorization system $(\za,\zb), (\za',\zb')$ \cite{Hir}.\medskip

{\bf Warning}. \emph{In this text we use exclusively model categories that admit a functorial factorization system}.\medskip

Another variant of the definition of a model category not only requires the existence of a functorial system, but fixes such a system and views it as part of the model structure; changing the system leaving everything else unchanged, leads to an isomorphic model category in all reasonable senses of `isomorphism of model categories' \cite{Ho99}.\medskip

Let $\tt M$ be a model category. We denote its initial and terminal objects by $0$ and $\ast$ respectively.\medskip

Let now $(\za,\zb), (\za',\zb')$ be any functorial factorization system. For every object $X\in\tt M\,,$ the first factorization factors the map $i_X:0\to X$ into a cofibration $\za(i_X)$ followed by a trivial fibration $q_X:=\zb(i_X)\,$: $$i_X:0\rightarrowtail QX\stackrel{\sim}{\twoheadrightarrow}X\;.$$ Regardless of the factorization $$i_X:0\rightarrowtail CX\stackrel{\sim}{\to}X$$ of $i_X:0\to X$ into a cofibration followed by a weak equivalence $c_X$ considered, we refer to $CX$ as {\it a} {\it cofibrant replacement} of $X\,.$ The object $QX$ we call a {\it cofibrant F-replacement} of $X$ (or just a cofibrant replacement if we do not want to stress that $q_X$ is a fibration). From the fact that the factorization $(\za,\zb)$ is functorial it follows that $Q$ is an endofunctor of $\tt M\,$. Moreover $q_X:QX\to X$ is functorial in $X\,$: $q$ is a natural transformation $q:Q\Rightarrow\op{id}_{\tt M}$ from the {\it cofibrant replacement functor} $Q$ to the identity functor $\op{id}_{\tt M}$ \cite{Ho99}. Instead of the cofibrant F-replacement functor $Q$ that is globally defined by the functorial factorization $(\za,\zb)\,,$ we will also use local / object-wise cofibrant replacements $CX$ or {\it local cofibrant F-replacements} $\tilde{C}X$ such that the map $c_X$ in the factorization $$i_X:0\rightarrowtail \tilde{C}X\stackrel{\sim}{\twoheadrightarrow}X$$ is $\op{id}_X$ if $X$ is already cofibrant \cite{Quill}. If for every $X$ we choose such a local cofibrant F-replacement and if $f:X\to Y\,,$ there is a lifting $\tilde{C}f:\tilde{C}X\to \tilde{C}Y,$ which will play an important role.

\begin{equation}\label{CTilde} \begin{tikzpicture}
 \matrix (m) [matrix of math nodes, row sep=3em, column sep=3em]
   {  \stackrel{}{0}  & & \tilde{C}Y  \\
      \tilde{C}X & \stackrel{}{X} & \stackrel{}{Y}  \\ };
 \path[->]
 (m-1-1) edge [>->] (m-1-3)
 (m-1-1) edge [>->] (m-2-1)
 (m-2-1) edge [->>] node[below] {\small{$\;\;{}_{\widetilde{}}\,\;c_X$}} (m-2-2)
 (m-1-3) edge [->>] node[auto] {\small{$\;{}_{\widetilde{}}\;\;c_Y$}}(m-2-3)
 (m-2-2) edge node[below] {\small{$f$}} (m-2-3)
 (m-2-1) edge [->, dashed] node[auto] {\small{$\tilde{C}f$}} (m-1-3);
\end{tikzpicture}
\end{equation}

The dual concepts of {\it fibrant replacement} $FX\,,$ of {\it fibrant $C$-replacement} $RX\,,$ {\it fibrant replacement functor} $R$ with natural transformation $r:\id_{\tt M}\Rightarrow R\,$, and of {\it local fibrant $C$-replacements} $f_X:X\stackrel{\sim}{\rightarrowtail}\tilde{F}X$ such that $f_X$ is identity if $X$ is a fibrant object are defined similarly using the functorial factorization $(\za',\zb')$ and the map $t_X:X\to \ast\,.$

\begin{equation}\label{FTilde} \begin{tikzpicture}
 \matrix (m) [matrix of math nodes, row sep=3em, column sep=3em]
   {  \stackrel{}{X}  & \stackrel{}{Y} & \tilde{F}Y  \\
      \tilde{F}X & & \stackrel{}{\ast}  \\ };
 \path[->]
 (m-1-1) edge node[auto] {\small{$f$}} (m-1-2)
 (m-1-2) edge [>->] node[auto] {\small{$\;\;{}_{\widetilde{}}\,\;f_Y$}}(m-1-3)
 (m-2-1) edge [->>] (m-2-3)
 (m-1-1) edge [>->] node[left] {\small{$\;\;{}_{\widetilde{}}\,\;f_X$}} (m-2-1)
 (m-1-3) edge [->>] (m-2-3)
 (m-2-1) edge [->, dashed] node[below] {\small{$\tilde{F}f$}} (m-1-3);
\end{tikzpicture}
\end{equation}

\subsection{Homotopy categories of a model category}

Let $\tt M$ be a model category. We denote its class of weak equivalences $f:X\stackrel{\sim}{\to} Y$ by $W$. Further, we write $f: X\stackrel{\cong}{\to} Y$ if $f$ is an isomorphism and we write $f\simeq^\ell g$ (resp., $f\simeq^r g$, $f\simeq g$) if $f,g:X\to Y$ are left homotopic (resp., right homotopic, homotopic). The homotopy class of a morphism $f:X\to Y$ will be denoted by $[f]\,.$ Recall also that the full subcategory ${\tt M}_{\op{fc}}\subset{\tt M}$ of fibrant-cofibrant objects of $\tt M\,,$ the full subcategory ${\tt M}_{\op{f}}\subset {\tt M}$ of fibrant objects and the full subcategory ${\tt M}_{\op{c}}\subset {\tt M}$ of cofibrant objects are all three categories with weak equivalences, whose weak equivalences $W$ are the weak equivalences $W$ of $\tt M$ between their objects. Further $\tt M_{\op{f}}$ (resp., $\tt M_{\op{c}}$) inherits fibrations (resp., cofibrations) and is the prototypical example of a fibration (resp., cofibration) category: fibration (resp., cofibration) categories have a notion weak equivalences and of fibrations (resp., cofibrations), but none of cofibrations (resp., fibrations). Hence ${\tt M}_{\op{fc}},$ ${\tt M}_{\op{f}}$ and ${\tt M}_{\op{c}}$ are {\it not} full model categories. Finally, we will in the following repeatedly use the canonical inclusion functor of ${\tt M}_{\op{c}}\subset{\tt M}$ and will denote it $i:{\tt M}_{\op{c}}\hookrightarrow{\tt M}\,$.

\subsubsection{Weak homotopy category}

\begin{defi}
The {\bf Kan homotopy category} ${\tt Ho}_{\op{K}}({\tt M})$ of a model category $\tt M$ is the category whose objects are the objects of ${\tt M}_{\op{fc}}$ and whose morphisms are the homotopy classes of morphisms of ${\tt M}_{\op{fc}}\,$\emph{:} $$\Hom_{{\tt Ho}_{\op{K}}({\tt M})}(X,Y):=\Hom_{\tt M}(X,Y)/\simeq\quad (X,Y\in{\tt M}_{\op{fc}})\;.$$
\end{defi}

\begin{theo}\label{KanWeak} The Kan homotopy category ${\tt Ho}_{\op{K}}({\tt M})$ is a weak localization ${\tt M}[W^{-1}]$ of $\,\tt M$ at its class $W$ of weak equivalences. The localization functor $\cL:{\tt M}\to {\tt Ho}_{\op{K}}({\tt M})$ is defined by $\cL X:=\tilde{F}\tilde{C}X$ on objects $X$ and on morphisms $f:X\to Y$ by $\cL f:=[\tilde{F}\tilde{C}f]\,.$ Here $\tilde{F}$ refers to a local fibrant C-replacement and $\tilde{C}$ to a local cofibrant F-replacement in the sense of Subsection \ref{Rep}.
\end{theo}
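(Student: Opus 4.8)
The plan is to verify, in turn, three things: that $\cL$ is a well-defined functor inverting $W$, that it has the weak universal property (L1), and that $-\circ\cL$ is fully faithful (L2'); the (un-named) uniqueness of ${\tt M}[W^{-1}]$ is then Proposition \ref{UEUI}.

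First I would check that $\cL$ is a functor. On ${\tt M}_{\op{fc}}$ the left, right and full homotopy relations agree and $\simeq$ is a congruence, so ${\tt Ho}_{\op{K}}({\tt M})$ is a genuine category; $\tilde CX$ is cofibrant and $\tilde F\tilde CX$ is fibrant--cofibrant, so $\cL X\in{\tt M}_{\op{fc}}$; and the liftings $\tilde Cf$ and $\tilde F(\tilde Cf)$ appearing in \eqref{CTilde} and \eqref{FTilde} are unique up to homotopy and carry homotopic maps to homotopic maps, so $\cL f=[\tilde F\tilde Cf]$ is independent of the choices. Since $\id$ is always a legitimate lift of $\id$ and a composite of legitimate lifts is a legitimate lift of the composite, $\cL$ preserves identities and composition. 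It inverts $W$ because the identities $c_Y\circ\tilde Cf=f\circ c_X$ and $\tilde F\tilde Cf\circ f_{\tilde CX}=f_{\tilde CY}\circ\tilde Cf$ hold on the nose, so for $f\in W$ the two-out-of-three property makes $\tilde F\tilde Cf$ a weak equivalence between fibrant--cofibrant objects, hence a homotopy equivalence by Whitehead's theorem, whence $\cL f$ is an isomorphism.

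For (L1) the key input is the classical lemma that \emph{any} functor $F\colon{\tt M}\to{\tt D}$ which inverts $W$ identifies homotopic maps: for a cylinder $X\sqcup X\to\mathrm{Cyl}(X)\xrightarrow{\sigma}X$ the end-inclusions $i_0,i_1$ lie in $W$ by two-out-of-three with $\sigma$, so $F(\sigma)F(i_0)=\id=F(\sigma)F(i_1)$ forces $F(i_0)=F(i_1)$, and a left homotopy $H$ gives $F(f)=F(H)F(i_0)=F(H)F(i_1)=F(g)$ (dually for right homotopies, and on ${\tt M}_{\op{fc}}$ every homotopy is one of these). Using this I would define $\tilde F\colon{\tt Ho}_{\op{K}}({\tt M})\to{\tt D}$ by $\tilde FZ:=FZ$ and $\tilde F[f]:=F(f)$, which is visibly a functor, and set $\eta_X:=F(f_{\tilde CX})\circ F(c_X)^{-1}\colon FX\xrightarrow{\cong}F(\tilde F\tilde CX)=\tilde F\cL X$, an isomorphism because $c_X,f_{\tilde CX}\in W$. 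Naturality of $\eta$ drops out of the two on-the-nose squares used in the previous step, so $\eta\colon F\stackrel{\cong}{\Rightarrow}\tilde F\circ\cL$ and (L1) holds.

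For (L2') the starting observation is that $\cL$ restricted to ${\tt M}_{\op{fc}}$ is \emph{exactly} the homotopy-quotient functor $Z\mapsto Z$, $f\mapsto[f]$: when source and target are already (co)fibrant all the structure maps $c_\bullet,f_\bullet$ are identities, which pins down $\tilde Cf=\tilde F(f)=f$. Hence $\cL$ is bijective on objects of ${\tt Ho}_{\op{K}}({\tt M})$ and full on its morphisms; this gives at once the faithfulness of $-\circ\cL$ and lets me extend any $\xi\colon G\cL\Rightarrow H\cL$ to $\zz\colon G\Rightarrow H$ by $\zz_Z:=\xi_Z$ on ${\tt M}_{\op{fc}}$, with naturality along $[f]=\cL f$ inherited from that of $\xi$ along $f$. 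The one remaining point, and the crux, is that $\zz\star\cL=\xi$ also at objects $X\notin{\tt M}_{\op{fc}}$, i.e. $\xi_{\cL X}=\xi_X$. I would obtain this from the identities $\cL(c_X)=\id_{\cL X}$ and $\cL(f_{\tilde CX})=\id_{\cL X}$: both maps are weak equivalences, so $\cL$ sends them to automorphisms of $\cL X=\tilde F\tilde CX$, and a computation with the normalization of the local replacements --- invoking the standard facts that a weak equivalence between a cofibrant and a fibrant object is a homotopy equivalence and that precomposition with a weak equivalence is a bijection on homotopy classes into a fibrant object --- identifies both with $\id$. Naturality of $\xi$ along $c_X\colon\tilde CX\to X$ and along $f_{\tilde CX}\colon\tilde CX\to\tilde F\tilde CX$ then forces $\xi_X=\xi_{\tilde CX}=\xi_{\cL X}$, completing (L2').

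The single real obstacle is this last step --- that a natural transformation out of $\tt M$ between $W$-inverting functors is already determined by its restriction to ${\tt M}_{\op{fc}}$, equivalently that $\cL$ kills the replacement structure maps $c_X$ and $f_{\tilde CX}$ in ${\tt Ho}_{\op{K}}({\tt M})$. This is exactly where the \emph{weakness} (not merely the faintness) of the Kan localization is used, and where the normalization built into the local fibrant and cofibrant replacements is indispensable; everything else is either formal or a direct appeal to Whitehead's theorem and the cylinder/path-object calculus.
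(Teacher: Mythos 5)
Your proposal is correct and follows essentially the same route as the paper: verify that $\cL$ is a well-defined $W$-inverting functor (via uniqueness of liftings up to homotopy and Whitehead), establish (L1) by the cylinder argument that $W$-inverting functors identify homotopic maps together with $\eta_X:=F(f_{\tilde CX})\circ F(c_X)^{-1}$, and establish (L2') by observing that $\cL$ restricted to ${\tt M}_{\op{fc}}$ is the homotopy-quotient and that $\cL(c_X)=\cL(f_{\tilde CX})=\id_{\cL X}$ force $\xi_{\cL X}=\xi_{\tilde CX}=\xi_X$. The only cosmetic difference is that the paper derives $\cL(c_X)=\id$ and $\cL(f_{\tilde CX})=\id$ from explicit lifting-diagram normalizations rather than from the general homotopy-equivalence facts you cite, but the mechanism is the same.
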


The proof is based on homotopy lemmas. We recall that two $\tt M$-morphisms $f,g:X\to Y$ between the same $\tt M$-objects are left (resp., right) homotopic and we write $f\simeq^{\ell}g$ (resp., $f\simeq^r g$) if $f\amalg g:X\amalg X\to Y$ (resp., $(f,g):X\to Y\times Y$) factors trough a cylinder object of $X$ (resp., a path object of $Y$). A cylinder object $\op{Cyl}(X)$ of $X$ is a factorization $$X\amalg X\rightarrowtail\op{Cyl}(X)\stackrel{\sim}{\to}X$$ of the fold map $\id_X\amalg\id_X:X\amalg X\to X$ into a cofibration $i$ followed by a weak equivalence $w$. The left homotopy factorization of $f\amalg g$ now means that there is a morphism $H:\op{Cyl}(X)\to Y$ such that $$f\amalg g=H\circ i\;.$$ If we denote the morphisms that come with the coproduct $X\amalg X$ by $\zf_1,\zf_2:X\rightrightarrows X\amalg X\,,$ the left homotopy factorization reads $$H\circ i_1:=H\circ i\circ\zf_1=f\quad\text{and}\quad H\circ i_2:=H\circ i\circ\zf_2=g\;.$$ Remember as well that left (resp., right) composition preserves left (resp., right) homotopies and that left (resp., right) homotopic maps are also right (resp., left) homotopic if the source (resp., target) object is cofibrant (resp., fibrant). Recall finally that if $X$ is cofibrant and $\zg:Y\stackrel{\sim}{\twoheadrightarrow}Z$ is a trivial fibration, then left composition by $\zg$ induces a 1:1 correspondence between left homotopy classes of morphisms: \be\label{1:1HC}\zg\circ -:\op{Hom}_{\tt M}(X,Y)/\simeq^{\ell}\;\to\; \op{Hom}_{\tt M}(X,Z)/\simeq^{\ell}\;.\ee The map $\zg\circ -$ is indeed well defined given the homotopy conservation property of composition mentioned above. Further, it is obviously surjective due to the lifting axiom of model categories. As for injectivity, let $f,g:X\to Y$ and assume that $\zg\circ f,\zg\circ g:X\to Z$ are left homotopic, i.e., that there is a morphism $H:\op{Cyl}(X)\to Z$ such that $\zg\circ(f\amalg g)=H\circ i\,.$ Since $\zg$ is a trivial fibration and $i$ a cofibration, the lifting axiom gives a morphism $\cH:\op{Cyl}(X)\to Y$ such that $f\amalg g=\cH\circ i\,.$ The dual result of \eqref{1:1HC} holds likewise.
\be
\begin{tikzcd}
X\amalg X\ar[r,"f\amalg\, g"]\ar[d,tail,swap,"i"]&Y\ar[d,two heads,"\zg"]\ar[d,swap,"\sim"]\\
\op{Cyl}(X)\ar[r,swap,"H"]\ar[ru,dashed,"\cH"]&Z
\end{tikzcd}
\ee

\begin{proof}
Since $0\rightarrowtail \tilde{C}X\stackrel{\sim}{\rightarrowtail}\tilde{F}\tilde{C}X\twoheadrightarrow\ast\,,$ the value $\cL X$ is fibrant and cofibrant and is therefore an object of ${\tt Ho}_{\op{K}}({\tt M})\,.$ As for $\cL f\,,$ notice that in view of \eqref{CTilde} and \eqref{FTilde} the liftings $\tilde{C}f$ and $\tilde{F}\tilde{C}f$ are any $\tt M$-morphisms that render the following squares commutative:

\begin{equation}\label{CFTilde} \begin{tikzpicture}
 \matrix (m) [matrix of math nodes, row sep=3em, column sep=3em]
   {  \stackrel{}{X}  & \tilde{C}X & \tilde{F}\tilde{C}X  \\
      \stackrel{}{Y} & \tilde{C}Y & \tilde{F}\tilde{C}Y  \\ };
 \path[->]
 (m-1-2) edge [->>] node[above] {\small{$\;\;{}_{\widetilde{}}\,\;c_X$}} (m-1-1)
 (m-1-2) edge [>->] node[auto] {\small{$\;\;{}_{\widetilde{}}\,\;f_{\tilde{C}X}$}}(m-1-3)
 (m-2-2) edge [->>] node[auto] {\small{$\;\;{}_{\widetilde{}}\,\;c_Y$}} (m-2-1)
 (m-2-2) edge [>->] node[below] {\small{$\;\;{}_{\widetilde{}}\,\;f_{\tilde{C}Y}$}}(m-2-3)
 (m-1-1) edge [->] node[auto] {$f$} (m-2-1)
 (m-1-2) edge [->] node[auto] {$\tilde{C}f$} (m-2-2)
 (m-1-3) edge [->] node[auto] {$\tilde{F}\tilde{C}f$} (m-2-3);
 \end{tikzpicture}
\end{equation}
If $\tilde{C}_1f$ and $\tilde{C}_2f$ are two different liftings \eqref{CTilde}, we have $$c_Y\circ\tilde{C}_1f=c_Y\circ\tilde{C}_2f\in\op{Hom}_{\tt M}(\tilde{C}X,Y)\;,$$ so that $\tilde{C}_1f\simeq^{\ell}\tilde{C}_2f\,.$ Similarly, if $\tilde{F}_1\tilde{C}_1f$ and $\tilde{F}_2\tilde{C}_2f$ are two different liftings \eqref{FTilde}, it follows from $$[\tilde{F}_1\tilde{C}_1f\circ f_{\tilde{C}X}]=[\tilde{F}_2\tilde{C}_2f\circ f_{\tilde{C}X}]\in\op{Hom}_{\tt M}(\tilde{C}X,\tilde{F}\tilde{C}Y)/\simeq^{r}$$ that $$[\tilde{F}_1\tilde{C}_1f]=[\tilde{F}_2\tilde{C}_2f]\in\op{Hom}_{\tt M}(\tilde{F}\tilde{C}X,\tilde{F}\tilde{C}Y)/\simeq\;=\;\op{Hom}_{{\tt Ho}_{\op{K}}({\tt M})}(\cL X,\cL Y)\;.$$ It can be straightforwardly checked that $\cL$ respects identities and compositions and is therefore a well-defined functor. Moreover, it is clear from \eqref{CFTilde} that if $f$ is a weak equivalence its lift $\tilde{F}\tilde{C}f$ is a weak equivalence between fibrant-cofibrant objects, hence a homotopy equivalence, which implies that $\cL f$ is an isomorphism.\medskip

To prove (L1) we must show that every functor $\cF\in[{\tt M},{\tt D}]_W$ can be written up to a natural isomorphism as the composite of $\cL$ and a functor $\tilde{\cF}\in{\tt Fun}({\tt Ho}_{\op{K}}({\tt M}),{\tt D})\,.$ We define $\tilde{\cF}$ on $$X\in{\tt M}_{\op{fc}}\quad\text{by}\quad \tilde{\cF} X:=\cF X\in\tt D$$ and on $$[f:X\to Y]\in\op{Hom}_{\tt M}(X,Y)/\simeq\;\;\;(X,Y\in{\tt M}_{\op{fc}})\quad\text{by}\quad \tilde{\cF}[f]:=\cF f\in\op{Hom}_{\tt D}(\tilde{\cF} X,\tilde{\cF} Y)\;.$$ The image $\tilde{\cF}[f]$ is well defined. Indeed, if $f,g:X\to Y$ are homotopic, there is a cylinder object $\op{Cyl}(X)$ or, more precisely, a factorization \be\label{WDI}w\circ i_1=\id_X\quad\text{and}\quad w\circ i_2=\id_X\,,\ee where $w$ is a weak equivalence and $i$ a cofibration (see above) and a morphism $H:\op{Cyl}(X)\to Y$ such that \be\label{WDII}H\circ i_1=f\;\text{and}\;H\circ i_2=g\;.\ee Applying $\cF\in[{\tt M,{\tt D}}]_W$ to the equalities in \eqref{WDI} we see that $\cF i_1=\cF i_2$ as $\cF w$ is an isomorphism, and applying it to the equalities in \eqref{WDII} we get that $\cF f=\cF g\,,$ so that $\tilde{\cF}$ is well defined on $[f]\,.$ Further $\tilde{\cF}$ respects identities and compositions since $\cF$ does and is therefore a functor $\tilde{\cF}\in\tt Fun(Ho_{\op{K}}(M),D)\,$. We now want to find a family indexed by $X\in{\tt M}$ of $\tt D$-isomorphisms $$\zh_X:\cF X\stackrel{\cong}{\to}\tilde{\cF}(\cL X)=\cF(\tilde{F}\tilde{C}X)\;,$$ such that for every $\tt M$-morphism $f:X\to Y$ we have $$\zh_Y\circ\cF f=\tilde{\cF}(\cL f)\circ \zh_X=\cF(\tilde{F}\tilde{C}f)\circ\zh_X\;.$$ Since if we apply $\cF$ to \eqref{CFTilde} we get the commutative diagram

\begin{equation}\label{CalFCFTilde} \begin{tikzpicture}
 \matrix (m) [matrix of math nodes, row sep=3em, column sep=3em]
   {  \cF X  & \cF(\tilde{C}X) & \cF(\tilde{F}\tilde{C}X)  \\
      \cF Y & \cF(\tilde{C}Y) & \cF(\tilde{F}\tilde{C}Y)  \\ };
 \path[->]
 (m-1-1) edge [->] node[above] {$(\cF c_X)^{-1}$} (m-1-2)
 (m-1-2) edge [->] node[auto] {$\cF(f_{\tilde{C}X})$}(m-1-3)
 (m-2-1) edge [->] node[below] {$(\cF c_Y)^{-1}$} (m-2-2)
 (m-2-2) edge [->] node[below] {$\cF(f_{\tilde{C}Y})$}(m-2-3)
 (m-1-1) edge [->] node[auto] {$\cF f$} (m-2-1)
 (m-1-2) edge [->] node[auto] {$\cF(\tilde{C}f)$} (m-2-2)
 (m-1-3) edge [->] node[auto] {$\cF(\tilde{F}\tilde{C}f)$} (m-2-3);
 \end{tikzpicture}
\end{equation}
it suffices to define $\zh_X$ as the isomorphism in the first line of \eqref{CalFCFTilde}.\medskip

It remains to check the fully faithfulness condition (L2') or equivalently \eqref{FulFai}, i.e., that for every $\cG,\cH\in{\tt Fun}({\tt Ho}_{\op{K}}({\tt M}),{\tt D})$ and every $\xi:\cG\circ\cL\Rightarrow\cH\circ\cL$ there is a unique $\zz:\cG\Rightarrow\cH$ such that $\zz\star\cL=\xi\,.$\medskip

Recall that $c_X:\tilde{C}X\stackrel{\sim}{\twoheadrightarrow} X$ (resp., $f_X:X\stackrel{\sim}{\rightarrowtail}\tilde{F}X$) is $\id_X$ if $X\in{\tt M}$ is cofibrant (resp., fibrant). In view of \eqref{CFTilde} we have therefore $\tilde{F}\tilde{C}f\simeq f$ if the source and target of $f$ are fibrant-cofibrant.\medskip

Hence if $\zz$ exists its components $\zz_X$ ($X\in{\tt M}_{\op{fc}}$) are necessarily given by $\zz_X=\zz_{\cL X}=\xi_X\,,$ so that $\zz$ is unique.\medskip

Conversely, if we set $\zz_X=\xi_X$ ($X\in{\tt M}_{\op{fc}}$) we get a family of $\tt D$-morphisms $\zz_X:\cG X\to \cH X\,.$ Since the transformation $\xi$ is natural, i.e., satisfies the obvious commutation condition for all $f\in\op{Hom}_{\tt M}(X,Y)$ ($X,Y\in{\tt M}$), the transformation $\zz$ is natural as well, i.e., satisfies this condition for all $[f]\in\op{Hom}_{{\tt Ho}_{\op{K}}({\tt M})}(X,Y)$ ($X,Y\in{\tt M}_{\op{fc}}$): $$\cH [f]\circ\zz_X=\cH(\cL f)\circ \xi_X=\xi_Y\circ\cG(\cL f)=\zz_Y\circ \cG[f]\,.$$ Further, by definition, we have $\zz_{\cL X}=\xi_{\cL X}$ ($X\in{\tt M}$) and get the needed result $\zz_{\cL X}=\xi_X$ if $X$ is fibrant and cofibrant. To conclude if $X$ is not necessarily fibrant-cofibrant we must prove that $\xi_{\cL X}=\xi_X$ for every $X\in{\tt M}\,.$ Using the naturality of $\xi$ for $f=f_{\tilde{C}X}\,,$ we get $$\cH(\cL f_{\tilde{C}X})\circ\xi_{\tilde{C}X}=\xi_{\cL X}\circ\cG(\cL f_{\tilde{C}X})$$ and using it for $f=c_X\,,$ we find $$\cH(\cL c_{X})\circ\xi_{\tilde{C}X}=\xi_{X}\circ\cG(\cL c_{X})\;,$$ where the images by $\cG\circ\cL$ and $\cH\circ\cL$ are isomorphisms. Therefore \be\label{NC1}\xi_{\cL X}=\cH(\cL f_{\tilde{C}X})\circ(\cH(\cL c_X))^{-1}\circ\xi_X\circ\cG(\cL c_X)\circ(\cG(\cL f_{\tilde{C}X}))^{-1}\;.\ee For $f:X\to Y$ equals $f_{\tilde{C}X}:\tilde{C}X\to \tilde{F}\tilde{C}X$ the lifting diagram \eqref{CFTilde} reads

$$\begin{tikzpicture}
 \matrix (m) [matrix of math nodes, row sep=3em, column sep=3em]
   {  \tilde{C}X  & \tilde{C}X & \tilde{F}\tilde{C}X  \\
      \tilde{F}\tilde{C}X & \tilde{F}\tilde{C}X & \tilde{F}\tilde{C}X  \\ };
 \path[->]
 (m-1-2) edge [->>] node[above] {\small{$\;\;{}_{\widetilde{}}\,\;\id_{\tilde{C}X}$}} (m-1-1)
 (m-1-2) edge [>->] node[auto] {\small{$\;\;{}_{\widetilde{}}\,\;f_{\tilde{C}X}$}}(m-1-3)
 (m-2-2) edge [->>] node[auto] {\small{$\;\;{}_{\widetilde{}}\,\;\id_{\tilde{F}\tilde{C}X}$}} (m-2-1)
 (m-2-2) edge [>->] node[below] {\small{$\;\;{}_{\widetilde{}}\,\;\id_{\tilde{F}\tilde{C}X}$}}(m-2-3)
 (m-1-1) edge [->] node[auto] {$f_{\tilde{C}X}$} (m-2-1)
 (m-1-2) edge [->] node[auto] {$\tilde{C}f_{\tilde{C}X}$} (m-2-2)
 (m-1-3) edge [->] node[auto] {$\tilde{F}\tilde{C}f_{\tilde{C}X}$} (m-2-3);
 \end{tikzpicture}
$$
so that \be\label{NC2}\cL f_{\tilde{C}X}=[\tilde{F}\tilde{C}f_{\tilde{C}X}]=[\id_{\tilde{F}\tilde{C}X}]=[\id_{\cL X}]\;.\ee Similarly, for $f:X\to Y$ equals $c_X:\tilde{C}X\to X$ the lifting diagram \eqref{CFTilde} reads

$$\begin{tikzpicture}
 \matrix (m) [matrix of math nodes, row sep=3em, column sep=3em]
   {  \tilde{C}X  & \tilde{C}X & \tilde{F}\tilde{C}X  \\
      \stackrel{}{X} & \tilde{C}X & \tilde{F}\tilde{C}X  \\ };
 \path[->]
 (m-1-2) edge [->>] node[above] {\small{$\;\;{}_{\widetilde{}}\,\;\id_{\tilde{C}X}$}} (m-1-1)
 (m-1-2) edge [>->] node[auto] {\small{$\;\;{}_{\widetilde{}}\,\;f_{\tilde{C}X}$}}(m-1-3)
 (m-2-2) edge [->>] node[auto] {\small{$\;\;{}_{\widetilde{}}\,\;c_X$}} (m-2-1)
 (m-2-2) edge [>->] node[below] {\small{$\;\;{}_{\widetilde{}}\,\;f_{\tilde{C}X}$}}(m-2-3)
 (m-1-1) edge [->] node[auto] {$c_X$} (m-2-1)
 (m-1-2) edge [->] node[auto] {$\tilde{C}c_X$} (m-2-2)
 (m-1-3) edge [->] node[auto] {$\tilde{F}\tilde{C}c_X$} (m-2-3);
 \end{tikzpicture}
$$
so that \be\label{NC3}\cL c_{X}=[\tilde{F}\tilde{C}c_X]=[\id_{\tilde{F}\tilde{C}X}]=[\id_{\cL X}]\;.\ee Combining \eqref{NC1}-\eqref{NC3}, we get $\xi_{\cL X}=\xi_X\,,$ which completes the proof.
\end{proof}

The next result is important for one of the comparison theorems of Subsection \ref{SSDerFunComp}.

\begin{cor}\label{BasicCor}
If $i:{\tt M}_{\op{c}}\hookrightarrow\tt M$ is the canonical inclusion functor, the pair $({\tt Ho}_{\op{K}}({\tt M}),\cL_{\tt M}\circ i)$ is a weak localization of $\,{\tt M}_{\op{c}}$ at $W\,.$ A similar result holds for ${\tt M}_{\op{f}}\,.$
\end{cor}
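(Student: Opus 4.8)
The plan is to rerun the proof of Theorem \ref{KanWeak}, restricted to ${\tt M}_{\op{c}}$, exploiting that every object of ${\tt M}_{\op{c}}$ is already cofibrant. Write $L:=\cL_{\tt M}\circ i\colon{\tt M}_{\op{c}}\to{\tt Ho}_{\op{K}}({\tt M})$. The first observation to record is that for $X\in{\tt M}_{\op{c}}$ the local cofibrant F-replacement degenerates, $\tilde{C}X=X$ and $c_X=\id_X$, so $LX=\tilde{F}X\in{\tt M}_{\op{fc}}$ and $Lf=[\tilde{F}f]$ for $f\colon X\to Y$; in particular $LX=X$ and $Lf=[f]$ when moreover $X,Y\in{\tt M}_{\op{fc}}$, since there also $\tilde{F}=\id$ and $\tilde{F}\tilde{C}f\simeq f$. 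Since $\cL_{\tt M}$ inverts all weak equivalences of $\tt M$, $L$ inverts the weak equivalences of ${\tt M}_{\op{c}}$. The second, recurring observation is that the auxiliary objects occurring in the proof of Theorem \ref{KanWeak} stay inside ${\tt M}_{\op{c}}$: $\tilde{F}X$ is cofibrant whenever $X$ is, because $0\rightarrowtail X\stackrel{\sim}{\rightarrowtail}\tilde{F}X$ is a composite of cofibrations, and a cylinder object $\op{Cyl}(X)$ of a cofibrant $X$ is cofibrant, because $X\amalg X$ is cofibrant and $X\amalg X\rightarrowtail\op{Cyl}(X)$ is a cofibration.

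For (L1), given $\cF\in[{\tt M}_{\op{c}},{\tt D}]_W$ I would set $\tilde{\cF}X:=\cF X$ and $\tilde{\cF}[f]:=\cF f$ on ${\tt M}_{\op{fc}}\subset{\tt M}_{\op{c}}$. Well-definedness on homotopy classes is then the same cylinder argument as in Theorem \ref{KanWeak}: on ${\tt M}_{\op{fc}}$ homotopy coincides with left homotopy, the cylinder object $\op{Cyl}(X)$ is cofibrant, so $\cF$ is defined on the whole diagram \eqref{WDI}--\eqref{WDII} and inverts the weak equivalence $\op{Cyl}(X)\stackrel{\sim}{\to}X$, whence $\cF f=\cF g$ when $f\simeq g$; functoriality of $\tilde{\cF}$ is inherited from $\cF$. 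The natural isomorphism is $\zh_X:=\cF(f_X)\colon\cF X\stackrel{\cong}{\to}\cF(\tilde{F}X)=\tilde{\cF}(LX)$, an isomorphism because $f_X\colon X\stackrel{\sim}{\rightarrowtail}\tilde{F}X$ is a weak equivalence between cofibrant objects; its naturality comes from applying $\cF$ to the defining square of $\tilde{F}f$ in \eqref{CFTilde} (with $\tilde{C}X=X$, $\tilde{C}Y=Y$).

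For (L2'), let $\cG,\cH\in{\tt Fun}({\tt Ho}_{\op{K}}({\tt M}),{\tt D})$ and $\xi\colon\cG\circ L\Rightarrow\cH\circ L$. Uniqueness: any $\zz$ with $\zz\star L=\xi$ must satisfy $\zz_X=\zz_{LX}=\xi_X$ for $X\in{\tt M}_{\op{fc}}$, and these exhaust the objects of ${\tt Ho}_{\op{K}}({\tt M})$. Existence: set $\zz_X:=\xi_X$ for $X\in{\tt M}_{\op{fc}}$; since every morphism of ${\tt Ho}_{\op{K}}({\tt M})$ is $[f]$ for a morphism $f$ of ${\tt M}_{\op{c}}$ with $Lf=[f]$, naturality of $\zz$ against $[f]$ is exactly naturality of $\xi$ against $f$. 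It then remains to verify $\zz_{LX}=\xi_X$, i.e. $\xi_{\tilde{F}X}=\xi_X$, for every $X\in{\tt M}_{\op{c}}$; substituting $c_X=\id_X$ into the derivation of \eqref{NC1} this reduces to the analogue of \eqref{NC2}, namely $Lf_X=[\id_{\tilde{F}X}]$, which one obtains by unwinding the lifting square \eqref{CFTilde} for the morphism $f_X$ and noting that $\id_{\tilde{F}X}$ is an admissible choice of $\tilde{F}\tilde{C}f_X$. Finally, the assertion for ${\tt M}_{\op{f}}$ is the formal dual: if $X$ is fibrant, then $c_X\colon\tilde{C}X\stackrel{\sim}{\twoheadrightarrow}X$ makes $\tilde{C}X$ fibrant, so $\cL_{\tt M}(iX)=\tilde{C}X$, and one repeats the argument with cylinder objects replaced by path objects.

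Nothing here is deep; the only points requiring care are the bookkeeping of which replacement functors degenerate to identities, the verification that the auxiliary objects $\op{Cyl}(X)$ and $\tilde{F}X$ remain cofibrant, and the computation $Lf_X=[\id_{\tilde{F}X}]$, which is precisely what makes the whiskering identity $\zz\star L=\xi$ go through. I expect this last computation to be the main (though still elementary) obstacle.
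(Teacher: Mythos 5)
Your proposal is correct and follows essentially the same route as the paper's own proof: both rerun the argument of Theorem~\ref{KanWeak} over ${\tt M}_{\op{c}}$, with the one new observation that $\op{Cyl}(X)$ (and, as you also note, $\tilde{F}X$) remains cofibrant, so all auxiliary morphisms stay in ${\tt M}_{\op{c}}$. You have simply made explicit what the paper summarizes as ``the remainder of the proof of Theorem~\ref{KanWeak} goes through without difficulty,'' in particular the degenerations $\tilde{C}X=X$, $c_X=\id_X$, the identification $\zh_X=\cF(f_X)$, and the key computation $L f_X=[\id_{\tilde{F}X}]$ that makes the (L2') whiskering argument close.
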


\begin{proof}
In the following we write $\cL$ instead of $\cL_{\tt M}\,.$ It is clear that $\cL\circ i\in\tt Fun(M_{\op{c}},Ho_{\op{K}}(M))$ sends weak equivalences to isomorphisms. We must show that for every functor $\cF\in[{\tt M}_{\op{c}},{\tt D}]_W$ there is a functor $\tilde{\cF}\in\tt Fun(Ho_{\op{K}}(M),D)$ and a natural isomorphism $\zh:\cF\stackrel{\cong}{\Rightarrow}\tilde{\cF}\circ\cL\circ i\,.$ We define $\tilde{\cF}$ as in the proof of Theorem \ref{KanWeak}, but before applying $\cF$ to the equalities in \eqref{WDI} and \eqref{WDII}, we have to check that the $\tt M$-morphisms $i_1, i_2: X\to\op{Cyl}(X)\,,$ $w:\op{Cyl}(X)\to X$ and $H:\op{Cyl}(X)\to Y$ have a cofibrant source and target, i.e., that $\op{Cyl}(X)$ is a cofibrant object. However, since cofibrations are closed under pushouts \cite{DS} and compositions, the coproduct of cofibrant objects is cofibrant and so is the cylinder of a cofibrant object. The remainder of the proof of Theorem \ref{KanWeak} goes through without difficulty.
\end{proof}

Notice that we denoted the localization functor by $\cL$ instead of $\cL_{\tilde{F}\tilde{C}}$ since it is essentially unique. Indeed, if $(\tilde{F}',\tilde{C}')$ is of the same type as $(\tilde{F},\tilde{C})$ and if we denote $\cL'$ the induced localization functor, the pairs $({\tt Ho_{\op{K}}(M)},\cL)$ and $({\tt Ho_{\op{K}}(M)},\cL')$ are both presentations of $({\tt M}[W^{-1}],L)\,.$ Using the unique isomorphisms \eqref{LInd2} and \eqref{LInd3} to identify the {\small LHS} and the {\small RHS} in \eqref{LInd2} and in \eqref{LInd3}, we conclude that $\tilde{\cL}:\tt Ho_{\op{K}}(M)\to Ho_{\op{K}}(M)$ is an automorphism of the homotopy category $\tt Ho_{\op{K}}(M)\,.$ As the pair made of the categorical automorphism $\tilde{\cL}$ and the natural isomorphism $\zh$ is unique, we can in view of \eqref{LInd1} identify $\cL$ and $\cL'\,.$\medskip

From what we said above it follows that the Kan homotopy category is characterized up to equivalence of categories by the {\it faint universal property} of Definition \ref{Faint}.

\subsubsection{Strong homotopy category}

\begin{defi} The {\bf Quillen homotopy category} or just the homotopy category ${\tt Ho(M)}$ of a model category $\tt M$ is the strong localization ${\tt M}[[W^{-1}]]$ of $\tt M$ at its class $W$ of weak equivalences.
\end{defi}

We know that $\tt Ho(M)$ does not have to exist, but is unique up to a unique isomorphism if it does.

\begin{theo}
The Quillen homotopy category $\,{\tt Ho}({\tt M})$ of a model category $\,\tt M$ does exist. The objects of $\,{\tt Ho}({\tt M})$ are the objects of $\,\tt M$ and its morphisms from $X$ to $Y$ are defined as $$\Hom_{{\tt Ho}({\tt M})}(X,Y):=\Hom_{\tt M}(\tilde{F}\tilde{C}X,\tilde{F}\tilde{C}Y)/\simeq\;\;,$$ where $\tilde{F}$ refers to a local fibrant C-replacement and $\tilde{C}$ to a local cofibrant F-replacement, or, equivalently, as $$\Hom_{{\tt Ho}({\tt M})}(X,Y):=\Hom_{\tt M}(RQX,RQY)/\simeq\;\;,$$ where $R$ and $Q$ are the fibrant and cofibrant replacement functors that are defined by a functorial factorization system. The localization functor $\zg:\tt M\to Ho(M)$ is defined on objects $X$ by $\zg X:=X$ and on morphisms $f:X\to Y$ by $\zg f:=[\tilde{F}\tilde{C}f]$, or, equivalently, by $\zg f:=[RQf]\,.$
\end{theo}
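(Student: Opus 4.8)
The plan is to build the Quillen homotopy category explicitly as a candidate and then verify the strict (hence a fortiori strong) universal property; since Theorem \ref{KanWeak} already establishes the weak universality of ${\tt Ho}_{\op{K}}({\tt M})$, the cleanest route is to relate the proposed ${\tt Ho}({\tt M})$ to ${\tt Ho}_{\op{K}}({\tt M})$. First I would define the category ${\tt Ho}({\tt M})$ as stated: objects are all objects of $\tt M$, and $\Hom_{{\tt Ho}({\tt M})}(X,Y):=\Hom_{\tt M}(\tilde F\tilde CX,\tilde F\tilde CY)/\!\simeq$, with composition induced from composition in $\tt M$ (well-defined because left and right composition preserve homotopy between fibrant-cofibrant objects, exactly as recalled before the proof of Theorem \ref{KanWeak}). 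I would check that $\zg$ as defined is a functor by the same bookkeeping as for $\cL$ in Theorem \ref{KanWeak}, and that it sends weak equivalences to isomorphisms via Whitehead's theorem. I would also record the equivalence of the two descriptions of the Hom-sets — one using local replacements $\tilde F\tilde C$, the other the functorial replacement functors $RQ$ — by noting that $RQX$ and $\tilde F\tilde CX$ are both fibrant-cofibrant and connected by a weak equivalence, hence a homotopy equivalence, giving a canonical bijection on homotopy classes; and likewise that $\zg f=[\tilde F\tilde Cf]=[RQf]$ under this identification.

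Next I would establish the key structural fact: the full subcategory of ${\tt Ho}({\tt M})$ on the fibrant-cofibrant objects is (isomorphic to, not merely equivalent to) ${\tt Ho}_{\op{K}}({\tt M})$, because for $X\in{\tt M}_{\op{fc}}$ one has $\tilde F\tilde CX=X$ (the local replacements are identities on objects already fibrant, resp.\ cofibrant), so the Hom-sets literally agree. Moreover the inclusion ${\tt Ho}_{\op{K}}({\tt M})\hookrightarrow{\tt Ho}({\tt M})$ is an equivalence of categories: it is fully faithful by construction and essentially surjective because every $X$ is isomorphic in ${\tt Ho}({\tt M})$ to $\tilde F\tilde CX$ via $\zg$ applied to the replacement maps, which become isomorphisms. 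Thus ${\tt Ho}({\tt M})$ is a faint — in fact, by Theorem \ref{KanWeak} and Proposition \ref{WFSS}, a weak — localization of $\tt M$ at $W$, since a weak localization composed with an equivalence on the target is again a weak localization (the defining condition (L2') on $-\circ L$ being an equivalence onto $[{\tt C,D}]_W$ is stable under post-composition with an equivalence ${\tt C}[W^{-1}]'\xrightarrow{\sim}{\tt C}[W^{-1}]$).

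It then remains to upgrade the weak universal property to the strict one, i.e.\ to promote, for each $\tt D$, the equivalence $-\circ\zg\colon{\tt Fun}({\tt Ho}({\tt M}),{\tt D})\to[{\tt M},{\tt D}]_W$ to an isomorphism of categories. By Proposition \ref{WFSSNSC}(2) it suffices to show this functor is bijective on objects and on morphisms. Surjectivity on objects is the content of the explicit construction: given $\cF\in[{\tt M},{\tt D}]_W$, define $\tilde\cF$ on objects by $\tilde\cF X:=\cF(\tilde F\tilde CX)$ and on a homotopy class $[g]$ between fibrant-cofibrant replacements by $\tilde\cF[g]:=\cF g$; this is well-defined on homotopy classes by the cylinder-object argument used for Theorem \ref{KanWeak}, it is a functor, and $\tilde\cF\circ\zg=\cF$ holds \emph{on the nose} — not merely up to natural isomorphism — precisely because $\tilde\cF(\zg X)=\cF(\tilde F\tilde CX)$ is defined to equal $\cF$'s value on the replacement, so no comparison isomorphism is needed. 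Injectivity on objects and bijectivity on morphisms then follow from the weak universal property already in hand: two functors out of ${\tt Ho}({\tt M})$ strictly lifting the same $\cF$ agree after $-\circ\zg$, and since $-\circ\zg$ is fully faithful (indeed an equivalence) they are connected by a unique natural isomorphism $\ze$ with $\ze\star\zg=\id$; one checks that fully faithfulness forces $\ze$ to be the identity, using that the components $\ze_X$ for fibrant-cofibrant $X$ are determined ($\ze_X=\ze_{\zg\tilde F\tilde CX}=\id$) and that the naturality square for the replacement maps then pins down $\ze_X$ for all $X$, exactly as in the final paragraph of the proof of Theorem \ref{KanWeak}. I expect the main obstacle to be precisely this last point — verifying that ``unique up to unique isomorphism'' collapses to ``unique on the nose,'' i.e.\ that the comparison isomorphism is forced to be the identity — which is where the weakness (not mere faintness) of the Kan localization is genuinely used, and where one must be careful that the on-the-nose factorization $\tilde\cF\circ\zg=\cF$ is available because of how $\tilde\cF$ was defined on objects, rather than only up to the natural isomorphism $\zh$ that appeared in Theorem \ref{KanWeak}.
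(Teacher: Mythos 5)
There is a concrete error in the step where you construct the strict lift $\tilde\cF$. You define $\tilde\cF X:=\cF(\tilde F\tilde CX)$ on objects and then claim that $\tilde\cF\circ\zg=\cF$ holds on the nose because ``$\tilde\cF(\zg X)=\cF(\tilde F\tilde CX)$ is defined to equal $\cF$'s value on the replacement.'' But $\zg X=X$ as an object of ${\tt Ho(M)}$, so the equality $\tilde\cF\circ\zg=\cF$ on objects requires $\cF(\tilde F\tilde CX)=\cF(X)$, which fails whenever $X$ is not already fibrant-cofibrant. With your definition the composite $\tilde\cF\circ\zg$ only agrees with $\cF$ up to the natural isomorphism $(\cF f_{\tilde CX})\circ(\cF c_X)^{-1}\colon\cF X\to\cF(\tilde F\tilde CX)$, which is precisely the comparison isomorphism you claim can be dispensed with. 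The repair is to set $\tilde\cF X:=\cF X$ and then, for $[g]\in\Hom_{\tt Ho(M)}(X,Y)$ with $g\colon\tilde F\tilde CX\to\tilde F\tilde CY$, define $\tilde\cF[g]:=\zy_Y^{-1}\circ\cF(g)\circ\zy_X$ where $\zy_X:=\cF(f_{\tilde CX})\circ(\cF c_X)^{-1}\colon\cF X\to\cF(\tilde F\tilde CX)$; well-definedness on homotopy classes is by your cylinder argument, functoriality is routine, and then $\tilde\cF(\zg f)=\zy_Y^{-1}\circ\cF(\tilde F\tilde Cf)\circ\zy_X=\cF f$ by the diagram \eqref{CalFCFTilde}, giving $\tilde\cF\circ\zg=\cF$ genuinely on the nose. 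Once the object-level definition is corrected, the uniqueness of the strict lift is in fact immediate and needs no appeal to the weak universal property: if $\tilde\cF_1\circ\zg=\tilde\cF_2\circ\zg$ then they agree on all objects (since $\zg$ is the identity on objects) and on all generating morphisms $\zg f$ and $(\zg w)^{-1}$, hence on all of ${\tt Ho(M)}$; so the elaborate argument with $\ze$ and the replacement naturality squares, while not wrong, is unnecessary here.

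Two smaller remarks. First, the theorem as stated only asserts that ${\tt Ho(M)}$ is a \emph{strong} localization; strictness is the content of the later Theorem \ref{StrLocWeaLoc}, so proving strictness here is harmless but more than is asked. Second, you should be aware that the paper's own proof of this theorem is a bare citation to \cite{Ho99} and \cite{Hir}; your attempt to derive it from the weak universality of ${\tt Ho}_{\op{K}}({\tt M})$ is a genuinely different, self-contained route, and the overall architecture (explicit construction, comparison with the Kan category on fibrant-cofibrant objects, verification of the universal property) is sound -- it is only the object-level definition of the lift that must be corrected.
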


\begin{proof}
See \cite{Ho99}, \cite{Hir}.
\end{proof}

Actually $\zg(f)$ is an isomorphism if {\it and only if} $f$ is a weak equivalence \cite{Ho99}, \cite{Hir}.\medskip

There is a second description \cite{GZ,Ho99} of $\tt Ho(M)\,$. It starts from the free category ${\tt F}({\tt M},W)$ on $\tt M$ and $W$, whose objects are the objects of $\tt M$ and whose morphisms from $X$ to $Y$ are the zigzags from $X$ to $Y$, i.e., are the finite strings of morphisms of $\tt M$ and formal reversals $w^{-1}:V\rightsquigarrow U$ of weak equivalences $w:U\to V$ in $W$ that start at $X$ and arrive at $Y\,.$ Composition in ${\tt F}({\tt M},W)$ is concatenation and the identity at $X\in\tt M$ is the empty string $1_X$ at $X\,.$ From this free category ${\tt F}({\tt M},W)$ one then gets the homotopy category $\tt Ho(M)$ by identifying:
\begin{enumerate}
  \item the identity string $1_X:X\to X$ with the identity map (string) $\id_X:X\to X$ at $X$ in $\tt M$ (identification of identities),
  \item for any composable $\tt M$-maps $f:X\to Y$ and $g:Y\to Z\,$, the concatenation string $f,g:X\to Y\to Z$ with the composite (string) $g\circ f:X\to Z$ in $\tt M$ (identification of composites),
  \item for any weak equivalence $w:X\to Y$ of $\tt M\,$, the concatenation string $w,w^{-1}:X\to Y\rightsquigarrow X$ with the identity $\id_X:X\to X\,,$
  \item for any weak equivalence $w:X\to Y$ of $\tt M\,$, the concatenation string $w^{-1},w:Y\rightsquigarrow X\to Y$ with the identity $\id_Y:Y\to Y\,.$
\end{enumerate}
The resulting quotient category is the homotopy category $\tt Ho(M)\,.$ We denote the class of a string $S$ by $[S]\,.$\medskip

The localization functor $\zg$ is defined by $\zg X=X$ and $\zg(f:X\to Y)=[f:X\to Y]\in\Hom_{\tt Ho(M)}(X,Y)\,.$ As $\tt Ho(M)$ is a category, morphisms, i.e., classes of strings, can be composed and, of course, the composite of two composable classes is the class of the concatenation; moreover, there is an identity class at $X\in\tt Ho(M)$ which is obviously the class $[\id_X]=[1_X]\,.$ It follows that $\zg$ respects identities and composition: $$\zg(\id_X:X\to X)=[1_X]=[1_{\zg X}]\;$$ and $$\zg(g\circ f)=[g\circ f]=[X\to Y\to Z]=[g]\circ [f]=\zg g\circ \zg f\;.$$ The functor $\zg:\tt M\to Ho(M)$ sends weak equivalences $w$ to isomorphisms, i.e., the class $\zg w=[w]$ is an isomorphism, i.e., it has an inverse class $[w^{-1}]\,$; indeed \be\label{InvHoW}[w^{-1}]\circ[w]=[w,w^{-1}]=[\id_X]=[1_X]\;,\ee and similarly the other way round.\medskip

It is easy to check that if $w:X\to Y$ and $\zn:Y\to Z$ are weak equivalences, we have $[\zn^{-1},w^{-1}]=[(\zn\circ w)^{-1}]\,,$ so that any morphism of $\tt Ho(M)$ is an alternation $[\to\; \rightsquigarrow\; \to\; \rightsquigarrow ...]$ or $[\rightsquigarrow\; \to\; \rightsquigarrow\; \to ...]\,$.\medskip

Finally, if $({\tt D},F)$ is any category $\tt D$ together with a functor $F:\tt M\to D$ that sends weak equivalences to isomorphisms, and if there exists a functor ${\tt Ho}(F):{\tt Ho(M)}\to {\tt D}$ such that ${\tt Ho}(F)\circ\zg = F\,,$ we have necessarily $${\tt Ho}(F)(X)=F(X)\quad\text{and}\quad{\tt Ho}(F)[f:X\to Y]=F(f:X\to Y)\;.$$ Further, since for a weak equivalence $w:X\to Y$ we get $${\tt Ho}(F)[w^{-1}]\circ {\tt Ho}(F)[w]={\tt Ho}(F)[w,w^{-1}]={\tt Ho}(F)[\id_X]=\id_{F(X)}\;,$$ we also have necessarily $${\tt Ho}(F)[w^{-1}]=(F(w))^{-1}\;.$$ Conversely, when setting ${\tt Ho}(F)(X)=F(X)\,,$ ${\tt Ho}(F)(f:X\to Y)=F(f:X\to Y)\,$ and ${\tt Ho}(F)(w^{-1}:Y\rightsquigarrow X)=(F(w))^{-1}\,,$ we obtain a functor ${\tt Ho}(F):{\tt F}({\tt M},W)\to\tt \tt D$ that descends to the quotient category $\tt Ho(M)\,.$ Further, pre-composing ${\tt Ho}(F):{\tt Ho(M)}\to \tt D$ with $\zg\,,$ we get $F$.

\begin{rem}\label{ZigSubMod} The zigzag construction of $({\tt Ho(M)},\zg_{\tt M})$ as the strong localization $({\tt M}[[W^{-1}]],$ $L_{\tt M})$ is also valid for subcategories $\,\tt S$ of a model category $\,\tt M$ with $W$ being the $\tt S$-morphisms that are weak equivalences as $\tt M$-morphisms \cite{Ho99}. As elsewhere in the literature, we will use in this paper the notation $({\tt Ho(S)},\zg_{\tt S})$ although the correct notation is $({\tt S}[[W^{-1}]], L_{\tt S})\,.$\end{rem}

\subsubsection{Comparison theorem}

\begin{theo}\label{StrLocWeaLoc} The Quillen homotopy category ${\tt Ho(M)}$ of a model category $\tt M$ is a strict localization ${\tt M}[[W^{-1}]]$ of $\,\tt M$ at $W\,$.\end{theo}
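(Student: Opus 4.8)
The plan is to lean entirely on the previous theorem, which already exhibits $({\tt Ho}({\tt M}),\zg)$ as a \emph{strong} localization of $\tt M$ at $W$; what remains is precisely the fully faithfulness clause (L2'), equivalently condition \eqref{FulFai}. By part~2 of Proposition~\ref{WFSSNSC} -- whose ``bijective on objects'' half is exactly the strongness we already have -- it suffices to show that for every category $\tt D$ the functor $-\circ\zg:{\tt Fun}({\tt Ho}({\tt M}),{\tt D})\to[{\tt M},{\tt D}]_W$ is bijective on morphisms. Concretely: given $\cG,\cH\in{\tt Fun}({\tt Ho}({\tt M}),{\tt D})$ and a natural transformation $\xi:\cG\circ\zg\Rightarrow\cH\circ\zg$, I must produce a unique $\zz:\cG\Rightarrow\cH$ with $\zz\star\zg=\xi$.

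Uniqueness is immediate. Since $\zg$ is the identity on objects, the whiskering relation forces, for every $X\in{\tt Ho}({\tt M})$, the equality $\zz_X=\zz_{\zg X}=(\zz\star\zg)_X=\xi_X$, so $\zz$ is completely determined.

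For existence I set $\zz_X:=\xi_X$ on objects and check naturality against an arbitrary morphism of ${\tt Ho}({\tt M})$, using the zigzag presentation recalled above. Every morphism of ${\tt Ho}({\tt M})$ is a finite composite of morphisms of the two elementary kinds $\zg f=[f]$, with $f$ an $\tt M$-morphism, and $[w^{-1}]$, with $w\in W$; as naturality squares paste along composition, it is enough to verify naturality of $\zz$ against each kind. Against $[f]=\zg f$ the naturality square of $\zz$ is literally that of $\xi$ at $f$, which holds by hypothesis. Against $[w^{-1}]:Y\rightsquigarrow X$ induced by $w:X\to Y$ in $W$, naturality of $\xi$ at $w$ reads $\cH[w]\circ\xi_X=\xi_Y\circ\cG[w]$; since $[w]$ is an isomorphism of ${\tt Ho}({\tt M})$ with inverse $[w^{-1}]$ (see \eqref{InvHoW}) and functors preserve inverses, we have $\cG[w^{-1}]=(\cG[w])^{-1}$ and $\cH[w^{-1}]=(\cH[w])^{-1}$, so left-multiplying by $\cH[w^{-1}]$ and right-multiplying by $\cG[w^{-1}]$ yields $\cH[w^{-1}]\circ\xi_Y=\xi_X\circ\cG[w^{-1}]$, i.e.\ the naturality square of $\zz$ at $[w^{-1}]$. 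Hence $\zz:\cG\Rightarrow\cH$ is a natural transformation, and $(\zz\star\zg)_X=\zz_X=\xi_X$ gives $\zz\star\zg=\xi$.

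There is essentially no real obstacle here; the only points needing a word of care are that naturality of $\zz$ may be tested on the generating morphisms $[f]$ and $[w^{-1}]$ alone, because every morphism of ${\tt Ho}({\tt M})$ is a concatenation of these, and that a functor sends the chosen formal inverse $[w^{-1}]$ to the genuine inverse of the image of $[w]$ -- both read off directly from the zigzag construction. Equivalently, one may conclude via Proposition~\ref{WFSSNSC}(2): the computation just carried out says that $-\circ\zg$ is full and faithful on ${\tt Fun}({\tt Ho}({\tt M}),{\tt D})$, hence, being bijective on objects by strongness, an isomorphism of categories onto $[{\tt M},{\tt D}]_W$ for every $\tt D$, which is the definition of strictness.
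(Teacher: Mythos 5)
Your proof is correct and follows essentially the same route as the paper. The paper packages your key step as Lemma~\ref{NatTraMHoM} (a family indexed by objects of $\tt M$ is natural for $F,G$ on $\tt Ho(M)$ iff it is natural for $F\circ\zg_{\tt M},G\circ\zg_{\tt M}$, proven by reducing to the generators $[f]$, $[w^{-1}]$ of the zigzag presentation and using that functors carry $[w^{-1}]$ to $(\cdot[w])^{-1}$); you carry out that same reduction inline, so the argument is substantively identical.
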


Propositions \ref{SW}, \ref{WFSS} and \ref{UEUI} show that the following comparison result holds:

\begin{cor}
The Quillen homotopy category ${\tt Ho(M)}$ of a model category $\tt M$ is a weak localization ${\tt M}[W^{-1}]$ of $\,\tt M$ at $W$ and is therefore equivalent to the Kan homotopy category ${\tt Ho}_{\op{K}}({\tt M})\,.$
\end{cor}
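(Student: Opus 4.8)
The plan is to build on the preceding theorem, which already exhibits $({\tt Ho(M)},\zg)$ as a strong localization, so that Condition (L1') is in hand; it therefore remains only to establish the fully faithfulness Condition (L2'), i.e., in view of \eqref{FulFai}, that for every $\tt D$ and all $\cG,\cH\in{\tt Fun(Ho(M),D)}$ the whiskering map $\zz\mapsto\zz\star\zg$ from $\Hom_{\tt Fun(Ho(M),D)}(\cG,\cH)$ to $\Hom_{\tt Fun(M,D)}(\cG\circ\zg,\cH\circ\zg)$ is a bijection. Equivalently, by Proposition \ref{WFSSNSC}(2), one checks that $-\circ\zg:{\tt Fun(Ho(M),D)}\to[{\tt M,D}]_W$ is an isomorphism of categories, bijectivity on objects being exactly the content of strong universality. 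The crucial structural fact to be used throughout is that $\zg$ is the identity on objects and that every object of $\tt Ho(M)$ is of the form $\zg X$ for a unique $X\in\tt M\,.$

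First I would dispatch injectivity of the whiskering map. Since $(\zz\star\zg)_X=\zz_{\zg X}=\zz_X$ for every $X\in{\tt M}$ and every object of $\tt Ho(M)$ arises this way, $\zz$ is entirely recovered from $\zz\star\zg$, which gives faithfulness. For surjectivity, given a natural transformation $\zx:\cG\circ\zg\Rightarrow\cH\circ\zg$ the only possible candidate is $\zz_X:=\zx_X$ (here $X$ ranges over the objects of $\tt Ho(M)$, which are the objects of $\tt M$), and then $\zz\star\zg=\zx$ is immediate; so everything hinges on showing that this object-wise family $\zz$ is natural with respect to \emph{every} morphism of $\tt Ho(M)\,,$ not merely the images $\zg f=[f]$ of $\tt M$-morphisms.

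The main step is precisely this naturality check, and here I would invoke the zigzag presentation of $\tt Ho(M)$ recalled above: every morphism of $\tt Ho(M)$ is a finite composite of morphisms of the two elementary types $[f]$ with $f\in{\tt M}$ and $[w^{-1}]$ with $w\in W\,,$ and since the collection of morphisms against which a given family is natural is closed under composition, it suffices to treat these two types. Naturality of $\zz$ against $[f]$ is verbatim the naturality of $\zx$ against $f$, which holds by hypothesis. For $[w^{-1}]$, one uses \eqref{InvHoW} to write $[w^{-1}]=(\zg w)^{-1}$ and the fact that functors preserve inverses to obtain $\cG[w^{-1}]=(\cG(\zg w))^{-1}$ and $\cH[w^{-1}]=(\cH(\zg w))^{-1}$; rearranging the naturality square $\cH(\zg w)\circ\zx_X=\zx_Y\circ\cG(\zg w)$ of $\zx$ against $w$ then yields exactly $\cH[w^{-1}]\circ\zz_Y=\zz_X\circ\cG[w^{-1}]\,.$ Hence $\zz:\cG\Rightarrow\cH$ is a natural transformation with $\zz\star\zg=\zx\,,$ and (L2') follows.

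I expect the only point requiring a little care to be the bookkeeping in the zigzag argument — making sure that naturality against the two generating types of morphisms genuinely propagates to arbitrary classes of zigzags and is untroubled by the four identifications defining $\tt Ho(M)\,;$ this is in fact automatic, because $\zz$ is defined purely object-wise and $\cG,\cH$ are honest functors on the quotient category, so any relation satisfied in $\tt Ho(M)$ is respected. In contrast to the Kan homotopy category, no homotopy-theoretic input (cylinder objects, the lifting axiom) enters at all: strictness of the Quillen localization is a purely formal consequence of $\zg$ being identity-on-objects together with the zigzag description.
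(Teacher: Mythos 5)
Your argument is correct and, in substance, is exactly the paper's route: the paper first proves that $({\tt Ho(M)},\zg_{\tt M})$ is a \emph{strict} localization (Theorem~\ref{StrLocWeaLoc}), whose only non-trivial ingredient is precisely your naturality check packaged as Lemma~\ref{NatTraMHoM} (identity-on-objects gives injectivity, and the zigzag description of ${\tt Ho(M)}$-morphisms reduces surjectivity to naturality against $[f]$ and $[w^{-1}]$, the latter handled by inverting the naturality square for $\zg w$); the Corollary then follows formally from Propositions~\ref{SW}, \ref{WFSS} and \ref{UEUI}. Your write-up differs only cosmetically: you inline the lemma rather than isolating it, and your explicit appeal to closure of naturality under composition is slightly tidier than the paper's treatment, which spells out only the one-step alternation $[w^{-1}]\circ[f]$ and leaves the induction over longer zigzags implicit. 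One small omission: you never explicitly invoke the uniqueness of faint/weak localizations (Proposition~\ref{UEUI}) to get the ``\ldots and is therefore equivalent to ${\tt Ho}_{\op{K}}({\tt M})$'' half of the statement, though this is clearly the intended formal conclusion once (L2$'$) is established.
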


We thus recover the well-known equivalence of $\tt Ho(M)$ and ${\tt Ho}_{\op{K}}({\tt M})\,.$ Conversely ${\tt Ho}_{\op{K}}({\tt M})$ is a weak localization ${\tt M}[W^{-1}]$ but it is not the strong localization ${\tt Ho(M)}={\tt M}[[W^{-1}]]$: it is a different category.

\begin{lem}\label{NatTraMHoM}
Let $F,G\in\tt Fun(Ho(M),D)\,.$ A family $\zy_{X}:F(X)\to G(X)$ of $\,\tt D$-maps indexed by the objects $X$ of $\,\tt M$ is a natural transformation $\zy:F\Rightarrow G$ if and only if it is a natural transformation $\zy:F\circ\zg_{\tt M}\Rightarrow G\circ\zg_{\tt M}\,.$
\end{lem}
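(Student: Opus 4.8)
The plan is to reduce everything to a check on a generating set of morphisms of $\tt Ho(M)\,$. First I would observe that the objects of $\tt Ho(M)$ are exactly the objects of $\tt M$ and that $\zg_{\tt M}$ is the identity on objects, so that a family $\zy_X:F(X)\to G(X)$ indexed by the objects of $\tt M$ is at the same time a candidate natural transformation $F\Rightarrow G$ and a candidate natural transformation $F\circ\zg_{\tt M}\Rightarrow G\circ\zg_{\tt M}\,$; only the class of morphisms along which the naturality squares are required to commute changes. The ``only if'' direction is then immediate: if $\zy:F\Rightarrow G$ is natural, then in particular its square commutes for every morphism $\zg_{\tt M}(f)=[f]$ with $f$ an $\tt M$-morphism, which is precisely the statement that $\zy:F\circ\zg_{\tt M}\Rightarrow G\circ\zg_{\tt M}$ is natural.

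For the converse I would use the zigzag presentation of $\tt Ho(M)$ recalled above: every morphism of $\tt Ho(M)$ is the class of a concatenation of $\tt M$-morphisms $f$ and of formal reversals $w^{-1}:V\rightsquigarrow U$ of weak equivalences $w:U\to V\,,$ and such a class is the composite in $\tt Ho(M)$ of the classes $[f]$ and $[w^{-1}]$ appearing in it. Since the naturality square of $\zy$ for a composite $\zb\circ\za$ is obtained by horizontally pasting the squares for $\za$ and for $\zb\,,$ the collection of morphisms for which the naturality square of $\zy$ holds is closed under composition. Hence it suffices to verify that square for the two generating families $[f]$ ($f$ an $\tt M$-morphism) and $[w^{-1}]$ ($w\in W$).

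The square for $[f]$ is exactly the hypothesis that $\zy:F\circ\zg_{\tt M}\Rightarrow G\circ\zg_{\tt M}$ is natural. For $[w^{-1}]$ with $w:X\to Y$ in $W\,,$ I would exploit that $[w^{-1}]$ is the inverse in $\tt Ho(M)$ of $\zg_{\tt M}(w)=[w]\,,$ so that the functors $F$ and $G$ send $[w]$ to isomorphisms and $[w^{-1}]$ to their inverses, $F[w^{-1}]=(F[w])^{-1}$ and $G[w^{-1}]=(G[w])^{-1}\,.$ Inverting the square $G[w]\circ\zy_X=\zy_Y\circ F[w]$ --- valid by the hypothesis applied to $f=w$ --- then gives $\zy_X\circ F[w^{-1}]=G[w^{-1}]\circ\zy_Y\,,$ which is the naturality square for $[w^{-1}]\,.$ Together with closure under composition this shows $\zy$ is natural with respect to every morphism of $\tt Ho(M)\,,$ completing the proof.

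There is no real obstacle in this argument; the only point requiring a modicum of care is the reduction step, i.e., spelling out that naturality squares paste along composition and that the classes $[f]$ and $[w^{-1}]$ generate $\tt Ho(M)$ under composition --- both of which are read off directly from the zigzag construction of $\tt Ho(M)$ described before the lemma.
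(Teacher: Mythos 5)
Your proof is correct and takes essentially the same approach as the paper's: the only-if direction is immediate, and the converse is reduced via the zigzag presentation to the naturality square for a generator $[w^{-1}]$, obtained by inverting the (hypothesized) square for $[w]$. The paper carries this out a little less explicitly, writing out the single alternation $[S]=[w^{-1}]\circ[f]$ rather than spelling out closure of naturality squares under pasting, but the underlying argument is identical.
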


\begin{proof}
The naturality condition for $\zy:F\circ\zg_{\tt M}\Rightarrow G\circ\zg_{\tt M}$ requires the obvious square to commute for all $\tt M$-maps $f:X\to Y\,,$ i.e., it reads \be\label{NCA}G[f]\circ\zy_X=\zy_Y\circ F[f]\;.\ee The naturality condition for $\zy:F\Rightarrow G$ requires the square to commute for all $\tt Ho(M)$-maps $[S]:X\to Y\,,$ i.e., it reads \be\label{NCB}G[S]\circ\zy_X=\zy_Y\circ F[S]\;,\ee where $[S]$ is an alternation of $\tt M$-maps $f$ and reversals $w^{-1}$ of weak equivalences $w\in W\,.$ Of course \eqref{NCB} implies \eqref{NCA}, but the opposite is also true. Indeed, assume \eqref{NCA} and consider the case $$[S]=[X\stackrel{f}{\to}Z\stackrel{w^{-1}}{\rightsquigarrow}Y]=[w^{-1}]\circ[f]\;.$$ In view of \eqref{InvHoW} we have $F[w^{-1}]=(F[w])^{-1}$ and similarly for $G\,,$ so that \eqref{NCA} gives \be\label{NCAW}\zy_Y\circ F[w^{-1}]=G[w^{-1}]\circ\zy_Z\;.\ee From \eqref{NCA} and \eqref{NCAW} it follows that $$G[S]\circ\zy_X=G[w^{-1}]\circ G[f]\circ\zy_X=G[w^{-1}]\circ\zy_Z\circ F[f]=\zy_Y\circ F[w^{-1}]\circ F[f]=\zy_Y\circ F[S]\;.$$
\end{proof}

\begin{proof}[Proof of Theorem \ref{StrLocWeaLoc}]
We have to prove that (L2') or equivalently \eqref{FulFai} holds. Hence let $F,G\in\tt Fun(Ho(M),D)$ and let $\zy:F\circ\zg_{\tt M}\Rightarrow G\circ\zg_{\tt M}\,$. If $\zz:F\Rightarrow G$ such that $\zz\star\zg_{\tt M}=\zy$ exists, its components are necessarily $\zz_X=\zy_X$ ($X\in{\tt M}$), so that $\zz$ is unique. Conversely, in view of Lemma \ref{NatTraMHoM} the family $\zz_X=\zy_X:F(X)\to G(X)$ of $\tt D$-maps indexed by $X\in\tt M$ is a natural transformation $\zz:F\Rightarrow G$ such that $\zz\star\zg_{\tt M}=\zy\,.$
\end{proof}

\section{Kan, faintly universal and strongly universal derived functors}\label{DerFuns}

\subsection{Left and right adjoint functors of the pullback by a functor}

If $P$ is a functor $P\in{\tt Fun(C,C')}$ and $\tt D$ is a category, the pre-composition $-\circ P$ is a functor $$P^*:{\tt Fun}({\tt C}',{\tt D})\to{\tt Fun(C,D)}\;,$$ whose value $P^*\zh'$ at a natural transformation $\zh'\in\op{Hom}_{\tt Fun(C',D)}(F',G')$ is the composition or whiskering $$\zh'\star P\in\op{Hom}_{\tt Fun(C,D)}(P^*(F'),P^*(G'))\;.$$ If $P^*$ has a left adjoint $P_{\,!}$ (resp., a right adjoint $P_*$), this adjoint is called {\it left Kan extension operation along} $P$ (resp., {\it right Kan extension operation along} $P$).\medskip

We will focus mainly on right extensions; left extensions are dual. The right adjoint $P_*$ exists if and only if $P^*$ is a left adjoint functor.\medskip

The universal morphism definition of a left adjoint functor $\cF:\tt F\to E$ constructs both the right adjoint $\cG$ and the counit $\ze:\cF\circ \cG\Rightarrow \id_{\tt E}$ of the adjunction: a functor $\cF:\tt F\to E$ is a left adjoint if and only if for any object $Y\in\tt E$ there is an object $\cG Y\in\tt F\,$ and a morphism $\ze_Y:\cF(\cG Y)\to Y$ such that $(\cG Y,\ze_Y)$ is universal, i.e., for any object $X\in\tt F$ and any morphism $\zz:\cF X\to Y$ there exists a unique morphism $f:X\to\cG Y$ such that $\ze_Y\circ\cF(f)=\zz\,.$ In this case, there is a unique way to extend $\cG$ to morphisms if one wants that $\ze$ becomes a natural transformation.\medskip

From this definition it follows that the right Kan extension operation $P_*$ along $P$ exists if and only if $P^*$ is a left adjoint if and only if for every $F\in\tt Fun(C,D)$ there is (a right extension) $P_*F\in\tt Fun(C',D)$ and a natural transformation $\ze_F: P^*(P_*F)\Rightarrow F\,,$ such that for every $F'\in\tt Fun(C',D)$ and every natural transformation $\zz:P^*F'\Rightarrow F$ there exists a unique natural transformation $\zy':F'\Rightarrow P_*F$ such that $\ze_F\circ P^*\zy'=\zz\,.$
\begin{equation} \begin{tikzpicture}
 \matrix (m) [matrix of math nodes, row sep=3em, column sep=3em]
   {  {\tt C}  & {\tt C}'  \\
       & {\tt D}  \\ };
 \path[->]
 (m-1-1) edge node[left] {\small{$F$}} (m-2-2)
 (m-1-1) edge node[above] {\small{$P$}} (m-1-2)
 (m-1-2) edge [->, dashed] node[auto] {\small{$P_*F$}} (m-2-2);
 \draw[-latex] node[auto]{$\;\;\;\;\;{\ze_F}$} node[above,rotate=45]{$\;\;\;\;\;\;\Leftarrow$};
\end{tikzpicture}.
\end{equation}
As usual, a universal pair $(P_*F,\ze_F)$ need not exist, but if it does, it is unique up to unique natural isomorphism. The universal pair $(P_*F,\ze_F)$ can exist locally, i.e., for specific objects $F\in\tt Fun(C,D)\,,$ without $P_*$ existing globally as right adjoint functor of $P^*\,.$ If $(P_*F,\ze_F)$ exists for a given $F$ we refer to $P_*F$ as the {\it right Kan extension of $F$ along $P\,.$}\medskip

For instance, let $\tt D$ be a category, let $\tt C$ be a small category and $\tt C'$ the terminal category $[\diamond]$ (with one object $\diamond$ and one morphism $\id_\diamond$). The unique functor $P\in\tt Fun(C,[\diamond])$ to the terminal category is the constant functor at $\diamond\,$, there is a canonical isomorphism $\,\tt Fun([\diamond],D)\cong D\,,$ and the pre-composition $P^*:{\tt D}\to{\tt Fun(C,D)}$ is the constant functor $$-^*:{\tt D}\to{\tt Fun(C,D)}\;,$$ which sends every object $d\in\tt D$ to the constant functor $d^*$ at $d$ and every $\tt D$-morphism $g:d\to d'$ to the constant natural transformation $g^*:d^*\Rightarrow d'^*$ with components $g^*_c=g$ ($c\in\tt C$). If $\tt D$ has all limits indexed by $\tt C\,,$ the limit $$\op{Lim}:\tt Fun(C,D)\to D$$ is a functor that is right adjoint to $-^*\,,$ i.e., $\op{Lim}$ is the right Kan extension operation along $P$. Indeed, the latter exactly means that for every $F\in\tt Fun(C,D)$ there is a functor $\op{Lim}F\in\tt Fun([\diamond],D)$ such that the diagram
\begin{equation} \begin{tikzpicture}
 \matrix (m) [matrix of math nodes, row sep=3em, column sep=3em]
   {  {\tt C}  & \text{$[\diamond]$}  \\
       & {\tt D}  \\ };
 \path[->]
 (m-1-1) edge node[left] {\small{$F$}} (m-2-2)
 (m-1-1) edge node[above] {\small{$P$}} (m-1-2)
 (m-1-2) edge [->, dashed] node[auto] {\small{$\op{Lim}F$}} (m-2-2);
 \draw[-latex] node[auto]{$\;\;\;\;\;{\ze}$} node[above,rotate=45]{$\;\;\;\;\;\;\Leftarrow$};
\end{tikzpicture}
\end{equation}
commutes up to a natural transformation $\ze:(\op{Lim}F)\circ P\Rightarrow F\,,$ and that the pair $(\op{Lim}F,\ze)$ is universal; in other words, the functor $\op{Lim}$ is the right Kan extension operation along P means precisely that for every $F\in\tt Fun(C,D)$ there is an object $\op{Lim}F\in\tt D$ and a family indexed by $c\in\tt C$ of $\tt D$-maps $\ze_c:\op{Lim}F\to F(c)$ such that for each $\tt C$-morphism $f:c\to c'$ we have $\ze_{c'}=F(f)\circ\ze_c\,,$ and that the pair $(\op{Lim}F, (\ze_c)_{c\in\tt C})$ is universal:
\begin{equation} \begin{tikzpicture}
 \matrix (m) [matrix of math nodes, row sep=1.5em, column sep=1.5em]
   {  &\text{$\op{Lim}F$}& \\ &&\\ \text{$F(c)$}
       & & \text{$F(c')$} \\ };
 \path[->]
 (m-1-2) edge node[above] {\small{$\ze_c\;\;$}} (m-3-1)
 (m-1-2) edge node[auto] {\small{$\ze_{c'}$}} (m-3-3)
 (m-3-1) edge node[auto] {\small{$F(f)$}} (m-3-3);
\end{tikzpicture}.
\end{equation}

\subsection{K and F derived functors on a category with a distinguished family of maps}\label{KFCatW}

Roughly speaking the derived functor of a functor from $\tt C$ to $\tt D$ is a functor from `the localization' of $\tt C$ to $\tt D$.\medskip

Let $\tt C$ be a category with a family $W$ of maps whose faint localization $L\in{\tt Fun(C,C}[W^{-1}])$ exists. If $F\in\tt Fun(C,D)\,,$ we have two possibilities to get a (left) {\it derived functor} $$\mathbf{L}F\in{\tt Fun(C}[W^{-1}],{\tt D})\;.$$ First we can use the right Kan extension of $F$ along $L$ and set $$\mathbf{L}^{\op{K}}F:=L_*F\in{\tt Fun(C}[W^{-1}],{\tt D})\;,$$ {\bf provided this extension exists}, i.e., the universal pair $(L_*F,\ze)\,,$ where $$\ze:L_*F\circ L\Rightarrow F\;,$$ exists. Of course if it does, it is unique (among universal pairs) up to unique natural isomorphism. Secondly, {\bf if $F$ sends maps in $W$ to isomorphisms}, there {\bf exists} in view of the faint universality of $({\tt C}[W^{-1}],L)$ a universal pair ($\tilde{F},\zi$), where $$\zi:\tilde{F}\circ L\stackrel{\cong}{\Rightarrow} F\;,$$ and we can set $$\mathbf{L}^{\op{F}}F:=\tilde{F}\in{\tt Fun(C}[W^{-1}],{\tt D})\;.$$ This pair is by definition unique (among similar pairs) up to unique natural isomorphism. The derived functors $\mathbf{L}^{\op{K}}F$ and $\mathbf{L}^{\op{F}}F$ are different, i.e., the pair $(\mathbf{L}^{\op{K}}F,\ze)$ is generally not a universal pair in the sense of the faint universal property and $(\mathbf{L}^{\op{F}}F,\zi)$ is usually not a right Kan extension, although this can be the case in specific situations (see Theorem \ref{FundamentalA}).\medskip

If $\tt D$ is also a category with a distinguished family of maps, $V$ say, whose faint localization $L_{\tt D}\in{\tt Fun(D,D}[V^{-1}])$ exists, one mostly considers {\it total derived functors} $$\mathbb{L}F\in{\tt Fun(C}[W^{-1}],{\tt D}[V^{-1}])$$ of functors $F\in\tt Fun(C,D)\,.$ Since $L_{\tt D}\circ F\in{\tt Fun(C,D}[V^{-1}])\,,$ it suffices to set $$\mathbb{L}^{\op{K}}F:=\mathbf{L}^{\op{K}}(L_{\tt D}\circ F)\;,$$ provided the {\small RHS} exists. If $F$ sends maps in $W$ to maps in $V\,,$ we can set as well $$\mathbb{L}^{\op{F}}F:=\mathbf{L}^{\op{F}}(L_{\tt D}\circ F)\;,$$ where the {\small RHS} does exist.\medskip

Right derived functors $\mathbf{R}F$ and total right derived functors $\mathbb{R}F$ are defined dually.

\begin{rem}\label{LRFDF1}
\emph{Left and right derived functors differ by the direction of the natural transformation. Since this transformation is an isomorphism for {\small F} derived functors, the left and right {\small F} derived functors coincide so far. Later we will work with {\small F} derived functors that we will call left derived and others that we call right derived (see Remark \ref{LRFDF2}).}
\end{rem}

\subsection{K, F and S derived functors in model categories}

In this subsection we consider only total derived functors of functors between model categories.\medskip

Hence let $F\in\tt Fun(M,N)$ be a functor between model categories $\tt M, N\,.$ We denote the class of weak equivalences of $\,\tt M$ (resp., $\tt N$) by $W$ (resp., $V$). The localization $({\tt M}[W^{-1}],L_{\tt M})$ (resp., $({\tt N}[V^{-1}],L_{\tt N})$) exists and admits the equivalent categories ${\tt Ho(M)}\approx {\tt Ho_{\op{K}}(M)}$ with their localization functors $\zg_{\tt M}$ and $\cL_{\tt M}$ (resp., ${\tt Ho(N)}\approx {\tt Ho_{\op{K}}(N)}$ with $\zg_{\tt N}$ and $\cL_{\tt N}$) as presentations.\medskip

In model categories there are a number of possible approaches to total derived functors. For each of these types of derived functor, we give a precise definition, emphasizing in particular which localization and localization property we are using, we state existence and uniqueness results under certain conditions, and we highlight the type of `commutation' relation that the type of derived functor considered satisfies.

\subsubsection{K derived functors in model categories}

In this section we {\it mainly} use the {\bf presentation $({\tt Ho(M)},\zg_{\tt M})$ of $({\tt M}[W^{-1}],L_{\tt M})$}.\medskip

Just as in general categories whose faint localization at a distinguished family of morphisms exists, we choose the

\begin{defi} Let $\tt M$ and $\tt N$ be model categories and let $F\in\tt Fun(M,N)\,.$ The {\bf {\small K} total derived functor} $\mathbb{L}^{\op{K}}F\in\tt Fun(Ho(M),Ho(N))$ is the right Kan extension $$\mathbb{L}^{\op{K}}F:=\mathbf{L}^{\op{K}}(\zg_{\tt N}\circ F)=(\zg_{\tt M})_*(\zg_{\tt N}\circ F)\;$$ of $\zg_{\tt N}\circ F$ along $\zg_{\tt M}$ provided this extension exists.
\end{defi}

The following existence and uniqueness result holds:

\begin{prop}\label{ExistKDerFun}
If $F\in\tt Fun(M,N)$ {\bf sends weak equivalences between cofibrant objects to weak equivalences}, the total left derived functor $$\mathbb{L}^{\op{K}}F=\mathbf{L}^{\op{K}}(\zg_{\tt N}\circ F)=(\zg_{\tt M})_*(\zg_{\tt N}\circ F)\in\tt Fun(Ho(M),Ho(N))$$ {\bf exists and is unique up to unique natural isomorphism}. More precisely (Subsection \ref{KFCatW}), the {\small K} derived functor of $F$ comes with a {\bf natural transformation} \be\label{U1}\ze:\mathbb{L}^{\op{K}}F\circ\zg_{\tt M}\Rightarrow \zg_{\tt N}\circ F\;,\ee the pair $(\mathbb{L}^{\op{K}}F,\ze)$ is universal and it is this universal pair that is unique.
\end{prop}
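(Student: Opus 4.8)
The plan is to construct the universal pair $(\mathbb{L}^{\op{K}}F,\ze)$ explicitly using the cofibrant replacement functor and Whitehead-type homotopy arguments, and then verify the universal property by hand; this is the classical Quillen/Dwyer--Spalinski construction of the left derived functor, reorganized to fit the present framework. First I would recall that, since $\zg_{\tt M}$ is the localization functor, a right Kan extension $(\zg_{\tt M})_*(\zg_{\tt N}\circ F)$ is a functor $\mathbb{L}^{\op{K}}F:{\tt Ho(M)}\to{\tt Ho(N)}$ together with $\ze:\mathbb{L}^{\op{K}}F\circ\zg_{\tt M}\Rightarrow\zg_{\tt N}\circ F$ which is universal among such pairs. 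The candidate is the functor induced on homotopy categories by the composite $\zg_{\tt N}\circ F\circ Q$, where $Q$ is the (functorial) cofibrant replacement functor with its natural transformation $q:Q\Rightarrow\id_{\tt M}$. The hypothesis that $F$ sends weak equivalences between cofibrant objects to weak equivalences is exactly what is needed for $\zg_{\tt N}\circ F\circ Q$ to send all weak equivalences of $\tt M$ to isomorphisms in ${\tt Ho(N)}$: if $w:X\to Y$ is a weak equivalence then $Qw:QX\to QY$ is a weak equivalence between cofibrant objects (by the two-out-of-three property and functoriality of $Q$), hence $F(Qw)$ is a weak equivalence in $\tt N$, hence $\zg_{\tt N}F(Qw)$ is an isomorphism. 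By the strong (or even just faint) universal property of $({\tt Ho(M)},\zg_{\tt M})$ — strictly speaking its strict universality from Theorem \ref{StrLocWeaLoc}, but the strong property suffices here — this descends to a functor $\mathbb{L}^{\op{K}}F:{\tt Ho(M)}\to{\tt Ho(N)}$ with $\mathbb{L}^{\op{K}}F\circ\zg_{\tt M}=\zg_{\tt N}\circ F\circ Q$ on the nose.

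Next I would define $\ze$. Since $q_X:QX\to X$ is a weak equivalence for every $X$, the morphism $\zg_{\tt N}(F(q_X)):\zg_{\tt N}F(QX)\to\zg_{\tt N}F(X)$ makes sense, and naturality of $q$ together with functoriality of everything in sight gives a natural transformation $\ze:\mathbb{L}^{\op{K}}F\circ\zg_{\tt M}=\zg_{\tt N}\circ F\circ Q\Rightarrow\zg_{\tt N}\circ F$ with components $\ze_X=\zg_{\tt N}(F(q_X))$. The crucial observation, which I would prove using the homotopy lemmas recalled before the proof of Theorem \ref{KanWeak} (in particular that a left homotopy between maps with cofibrant source is carried to an equality by any functor inverting weak equivalences, via the cylinder factorization), is that $\ze_X$ is already an isomorphism in ${\tt Ho(N)}$ when $X$ is cofibrant — more precisely, for cofibrant $X$ the map $F(q_X)$ need not be a weak equivalence, but the relevant point is that the induced map on the homotopy category is invertible because $q_X$ admits a section up to left homotopy when $X$ is cofibrant (lift $\id_X$ against the trivial fibration $q_X$). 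This gives the building block for checking universality.

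Then I would verify the universal property of $(\mathbb{L}^{\op{K}}F,\ze)$: given any $G'\in{\tt Fun(Ho(M),Ho(N))}$ and any natural transformation $\zz:G'\circ\zg_{\tt M}\Rightarrow\zg_{\tt N}\circ F$, I must produce a unique $\zy':G'\Rightarrow\mathbb{L}^{\op{K}}F$ with $\ze\circ(\zy'\star\zg_{\tt M})=\zz$. For uniqueness: evaluating $\ze\circ(\zy'\star\zg_{\tt M})=\zz$ at a cofibrant object $X$ and using that $\ze_X$ is invertible there forces $\zy'_{\zg_{\tt M}X}=\ze_X^{-1}\circ\zz_X$ on cofibrant $X$, and since every object of ${\tt Ho(M)}$ is isomorphic to (the image of) a cofibrant one via $q$, this determines $\zy'$ on all objects; naturality of $\zy'$ then pins it down completely. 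For existence: define $\zy'$ on a general object $Z$ by transporting along $\zg_{\tt M}(q_Z)$ — concretely $\zy'_Z := \mathbb{L}^{\op{K}}F(\zg_{\tt M}q_Z)\circ\ze_{QZ}^{-1}\circ\zz_{QZ}\circ G'(\zg_{\tt M}q_Z)^{-1}$ — and check naturality and the compatibility equation using the homotopy lemmas again. The main obstacle, and the step I expect to require the most care, is precisely this well-definedness and naturality check: one must show the formula for $\zy'_Z$ is independent of the choices implicit in the homotopy classes and that it assembles into a genuine natural transformation, which is where the left-homotopy invariance of functors killing weak equivalences and the 2-out-of-3 bookkeeping do all the work. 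Finally, uniqueness of the whole universal pair up to unique natural isomorphism is then automatic from the general nonsense about universal pairs recalled in Subsection \ref{KFCatW}.
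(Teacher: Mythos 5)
Your proposal follows essentially the same route as the paper: descend $\zg_{\tt N}\circ F\circ Q$ to $\op{Ho}(F\circ Q)$ by strong universality of $({\tt Ho(M)},\zg_{\tt M})$, set $\ze_X=\zg_{\tt N}(F(q_X))$, and verify the universal property by building the comparison transformation from the naturality square of the given $\zz$ at $q_X$. One small correction: you assert that for cofibrant $X$ the map $F(q_X)$ ``need not be a weak equivalence'' and route instead through a section of $q_X$; in fact $QX$ is always cofibrant, so for cofibrant $X$ the morphism $q_X:QX\to X$ is a weak equivalence between cofibrant objects and $F(q_X)$ \emph{is} a weak equivalence by hypothesis, which is the direct reason $\ze_X$ is invertible there (and lifting $\id_X$ against the trivial fibration $q_X$ gives a genuine section, not merely one up to left homotopy). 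Also, your formula $\zy'_Z=\mathbb{L}^{\op{K}}F(\zg_{\tt M}q_Z)\circ\ze_{QZ}^{-1}\circ\zz_{QZ}\circ G'(\zg_{\tt M}q_Z)^{-1}$ works but is more complicated than necessary: the naturality square of $\zz$ at $q_Z$ already forces $\zy'_Z=\zz_{QZ}\circ G'(\zg_{\tt M}q_Z)^{-1}$, which is the paper's formula (and is equal to yours because $\zg_{\tt M}(Qq_Z)=\zg_{\tt M}(q_{QZ})$ after cancelling the isomorphism $\zg_{\tt M}(q_Z)$); the paper also invokes Lemma \ref{NatTraMHoM} to reduce naturality of $\zy'$ over $\tt Ho(M)$ to naturality over $\tt M$, a step your sketch leaves implicit.
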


We will prove Proposition \ref{ExistKDerFun} in Subsection \ref{SSDerFunComp}.\medskip

The dual result holds for the total right derived functor $\mathbb{R}^{\op{K}}F:\tt Ho(M)\to Ho(N)$. In particular, if $F:{\tt M\to N}$ is a left Quillen functor, it respects trivial cofibrations, hence sends trivial cofibrations between cofibrant objects to weak equivalences, and in view of Brown's lemma sends all weak equivalences between cofibrant objects to weak equivalences, so that $\mathbb{L}^{\op{K}}F$ exists.

\begin{prop}
If $F:{\tt M\to N}:G$ is a Quillen adjunction, both functors $\mathbb{L}^{\op{K}}F:{\tt Ho(M)\rightleftarrows Ho(N)}:\mathbb{R}^{\op{K}}G$ exist and are adjoint functors.
\end{prop}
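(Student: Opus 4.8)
The statement to prove is that for a Quillen adjunction $F:{\tt M\to N}:G$, the derived functors $\mathbb{L}^{\op{K}}F$ and $\mathbb{R}^{\op{K}}G$ both exist and form an adjunction $\mathbb{L}^{\op{K}}F:{\tt Ho(M)\rightleftarrows Ho(N)}:\mathbb{R}^{\op{K}}G$. The existence of both functors is immediate from the discussion just above: a left Quillen functor sends weak equivalences between cofibrant objects to weak equivalences (Brown's lemma), so Proposition \ref{ExistKDerFun} applies to $F$, and dually a right Quillen functor sends weak equivalences between fibrant objects to weak equivalences, so the dual of Proposition \ref{ExistKDerFun} yields $\mathbb{R}^{\op{K}}G$. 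The real content is the adjunction. The plan is to reduce everything to the fibrant-cofibrant level, where the adjoint pair $(F,G)$ descends to an honest adjunction after passing to homotopy classes, and then to identify this with the claimed adjunction between the Kan homotopy categories, using the comparison results (Theorem \ref{KanWeak} and Corollary \ref{BasicCor}).

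\textbf{Key steps.} First I would pin down $\mathbb{L}^{\op{K}}F$ and $\mathbb{R}^{\op{K}}G$ concretely. By Proposition \ref{ExistKDerFun} and its proof (Subsection \ref{SSDerFunComp}), the K derived functor of $F$ is computed by precomposing with cofibrant replacement: on objects $\mathbb{L}^{\op{K}}F(X) \cong F(QX)$ and on morphisms via $\zg_{\tt N}(F(Qf))$, where $Q$ is the cofibrant replacement functor; dually $\mathbb{R}^{\op{K}}G(Y)\cong G(RY)$. Second, I would recall the classical fact that the restriction $F:{\tt M}_{\op c}\to{\tt N}$ and $G:{\tt N}_{\op f}\to{\tt M}$ are compatible with the homotopy relations: $F$ preserves left homotopies between maps with cofibrant source (since it preserves cylinder objects of cofibrant objects, as cofibrations and pushouts and coproducts are preserved), and $G$ dually preserves right homotopies between maps with fibrant target. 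Hence $F$ and $G$ induce functors on the respective full subcategories of ${\tt Ho}_{\op K}$, and the adjunction isomorphism $\Hom_{\tt N}(FA,B)\cong\Hom_{\tt M}(A,GB)$ for $A$ cofibrant and $B$ fibrant descends to a natural bijection on homotopy classes
\begin{equation}\label{HoAdjFC}
\Hom_{{\tt Ho}_{\op K}({\tt N})}(FA,B)\;\cong\;\Hom_{{\tt Ho}_{\op K}({\tt M})}(A,GB)\qquad(A\in{\tt M}_{\op{fc}},\ B\in{\tt N}_{\op{fc}}).
\end{equation}
Third, I would transport \eqref{HoAdjFC} from ${\tt Ho}_{\op K}$ to ${\tt Ho}$ using the equivalence ${\tt Ho(M)}\approx{\tt Ho}_{\op K}({\tt M})$ (and likewise for ${\tt N}$) provided by the Comparison Theorem \ref{StrLocWeaLoc} together with Proposition \ref{UEUI}: under this equivalence a general object $X$ of ${\tt Ho(M)}$ corresponds to $\tilde F\tilde CX\in{\tt M}_{\op{fc}}$, which is exactly the value at which $\mathbb{L}^{\op K}F$ and $\mathbb{R}^{\op K}G$ are computed. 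Chaining the bijections
\[
\Hom_{{\tt Ho(N)}}(\mathbb{L}^{\op K}F(X),Y)\cong\Hom_{{\tt Ho}_{\op K}({\tt N})}(F(\tilde C X),\tilde F\tilde C Y)\cong\Hom_{{\tt Ho}_{\op K}({\tt M})}(\tilde C X,G(\tilde F\tilde C Y))\cong\Hom_{{\tt Ho(M)}}(X,\mathbb{R}^{\op K}G(Y))
\]
and checking naturality in $X$ and $Y$ (which follows since each individual bijection is natural, the middle one by naturality of the Quillen adjunction unit/counit on homotopy classes and the outer ones by naturality of the localization equivalences) completes the argument. An alternative, cleaner route is to construct the unit and counit directly: the derived unit $\eta^{\mathbf L}:\id\Rightarrow\mathbb{R}^{\op K}G\circ\mathbb{L}^{\op K}F$ at $X$ is $\zg_{\tt M}$ applied to the composite $X\stackrel{\sim}{\leftarrow}\tilde C X\to GF(\tilde C X)\to G(\tilde F F\tilde C X)$, and dually for the counit, and then verify the triangle identities in ${\tt Ho(M)}$ and ${\tt Ho(N)}$; the triangle identities hold because the corresponding composites in $\tt M$ and $\tt N$ are weak equivalences (again by Brown's lemma applied to the Quillen pair), hence become identities after localization.

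\textbf{The main obstacle.} The genuinely delicate point is \emph{naturality}, not the pointwise bijection. The adjunction isomorphism $\Hom_{\tt N}(FA,B)\cong\Hom_{\tt M}(A,GB)$ is natural in $A$ and $B$ as a statement in $\tt M$ and $\tt N$; I must check that this naturality survives (i) passage to homotopy classes — which requires that the adjunction bijection carries homotopic maps to homotopic maps, and this in turn uses that $F$ sends a cylinder of a cofibrant object to a cylinder (not just to \emph{some} cylinder-like object) and dually that $G$ sends a path object of a fibrant object to a path object, together with the adjointness of these constructions — and (ii) the identification of ${\tt Ho}$-morphisms out of a general object $X$ with ${\tt Ho}_{\op K}$-morphisms out of $\tilde C X$, which depends on the functoriality statements \eqref{CTilde}--\eqref{FTilde} and on the fact, established in the proof of Theorem \ref{KanWeak}, that the localization functor is well-defined on homotopy classes and insensitive (up to canonical isomorphism) to the choice of replacements. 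Once these coherence checks are in place, the triangle identities are then a routine verification that the relevant zigzags in $\tt M$ and $\tt N$ collapse to identities in the homotopy categories.
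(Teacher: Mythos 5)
The paper states this proposition without proof; it is the classical result on derived Quillen adjunctions (cf.\ Hovey or Dwyer--Spali\'nski), and your argument follows that standard route: compute $\mathbb{L}^{\op K}F$ and $\mathbb{R}^{\op K}G$ via replacements, descend the adjunction bijection to homotopy classes, and transport along the localization equivalences. That is the right plan and the existence part is correct. There is, however, a concrete gap in your displayed chain of isomorphisms. You write $\Hom_{{\tt Ho}_{\op K}({\tt N})}(F(\tilde C X),\tilde F\tilde C Y)$ and $\Hom_{{\tt Ho}_{\op K}({\tt M})}(\tilde C X,G(\tilde F\tilde C Y))$, but $F(\tilde C X)$ is only cofibrant (it need not be fibrant), $\tilde C X$ is only cofibrant, and $G(\tilde F\tilde C Y)$ is only fibrant. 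Since ${\tt Ho}_{\op K}$ as defined in this paper has \emph{only} the fibrant--cofibrant objects as objects, none of these hom-sets is defined as written, so the middle two links of the chain do not parse.

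The fix is standard and you already have the ingredients: either insert the missing replacements, proving the adjunction bijection on homotopy classes
\[
\Hom_{\tt N}\bigl(F(\tilde C X),\tilde F\tilde C Y\bigr)/\!\simeq\ \cong\ \Hom_{\tt M}\bigl(\tilde C X,G(\tilde F\tilde C Y)\bigr)/\!\simeq
\]
directly (this is well defined because the source is cofibrant and the target is fibrant on both sides, so left and right homotopy agree and the $F$--$G$ bijection respects them by your cylinder/path-object observation), and then identify each side with the relevant ${\tt Ho}$-hom using that $\Hom_{\tt Ho}(A,B)\cong\Hom(A,B)/\!\simeq$ for $A$ cofibrant and $B$ fibrant, together with the identifications $\mathbb L^{\op K}F(X)=F(\tilde C X)$ and $\mathbb R^{\op K}G(Y)=G(\tilde F Y)$ in ${\tt Ho}$ from Theorem \ref{Fundamental0} and its dual. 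Your alternative route via the derived unit/counit also needs more care than you allot it: the triangle identities are not a matter of some composite "becoming an identity after localization because it is a weak equivalence" --- a weak equivalence becomes an \emph{isomorphism} in ${\tt Ho}$, not the identity --- one must actually chase the undearived triangle identity through the replacement maps. The hom-set route you lead with is the cleaner one; once the above bookkeeping is repaired it is correct.
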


Recall that a Quillen adjunction (`morphism of model categories') $F:{\tt M\to N}:G$ is a Quillen equivalence (`equivalence of model categories') if and only if the $(X,Y)$-natural adjunction bijections $$\Hom_{\tt N}(FX,Y)\ni f\cong f_{\flat}\in\Hom_{\tt M}(X,GY)$$ respect weak equivalences if $X$ is cofibrant and $Y$ is fibrant, i.e., $f$ is a weak equivalence if and only if $f_\flat$ is a weak equivalence.

\begin{prop}If $F:{\tt M\to N}:G$ is a Quillen equivalence, then $\mathbb{L}^{\op{K}}F:{\tt Ho(M)\rightleftarrows Ho(N)}:\mathbb{R}^{\op{K}}G$ is an equivalence of categories.\end{prop}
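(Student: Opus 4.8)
The statement to prove is that if $F:{\tt M\to N}:G$ is a Quillen equivalence, then the adjunction $\mathbb{L}^{\op{K}}F:{\tt Ho(M)\rightleftarrows Ho(N)}:\mathbb{R}^{\op{K}}G$ is an equivalence of categories. Since we already know (from the preceding proposition) that $\mathbb{L}^{\op{K}}F$ and $\mathbb{R}^{\op{K}}G$ form an adjoint pair, it suffices to show that the unit $\zh:\id_{\tt Ho(M)}\Rightarrow \mathbb{R}^{\op{K}}G\circ\mathbb{L}^{\op{K}}F$ and the counit $\ze:\mathbb{L}^{\op{K}}F\circ\mathbb{R}^{\op{K}}G\Rightarrow\id_{\tt Ho(N)}$ are natural isomorphisms. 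The strategy is to check this objectwise, and to exploit the freedom of presentation: since ${\tt Ho(M)}$ and ${\tt Ho_{\op{K}}(M)}$ are identified, it is enough to evaluate the unit on a fibrant-cofibrant object $X\in{\tt M}_{\op{fc}}$ (and the counit on $Y\in{\tt N}_{\op{fc}}$), because every object of ${\tt Ho(M)}$ is isomorphic to one of this form.

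\textbf{Key steps.} First I would unwind the values of $\mathbb{L}^{\op{K}}F$ and $\mathbb{R}^{\op{K}}G$ on fibrant-cofibrant objects: using the construction of the K derived functors via cofibrant/fibrant replacement, on a cofibrant $X$ one has $\mathbb{L}^{\op{K}}F(X)\cong \zg_{\tt N}(FX)$, and dually on a fibrant $Y$ one has $\mathbb{R}^{\op{K}}G(Y)\cong\zg_{\tt M}(GY)$; more precisely the natural transformation $\ze$ of \eqref{U1} restricted to cofibrant objects is an isomorphism, and dually for $G$. Next I would identify the derived unit at a fibrant-cofibrant $X$ with the image in ${\tt Ho(M)}$ of the composite $X\xrightarrow{\zh_X^{\,\mathrm{adj}}}G F X\to G\tilde F(FX)$, where $\zh^{\mathrm{adj}}$ is the unit of the original $(F,G)$-adjunction and the second map is $G$ applied to a fibrant replacement $r_{FX}:FX\stackrel{\sim}{\rightarrowtail}\tilde F(FX)$; here $X$ is already cofibrant so no cofibrant replacement intervenes on the source. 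The decisive point is then the characterization of Quillen equivalences recalled just before the statement: for $X$ cofibrant and $Z$ fibrant, a map $f:FX\to Z$ is a weak equivalence iff its adjunct $f_\flat:X\to GZ$ is. Applying this with $Z=\tilde F(FX)$ and $f=r_{FX}$ (a weak equivalence by construction), we conclude that the adjunct, which is exactly the composite displayed above, is a weak equivalence in ${\tt M}$; hence its image under $\zg_{\tt M}$ is an isomorphism in ${\tt Ho(M)}$, i.e. the derived unit is an isomorphism at $X$. A symmetric argument, using that $GY$ is fibrant and a cofibrant replacement $QGY\stackrel{\sim}{\twoheadrightarrow}GY$, shows the derived counit is an isomorphism at each fibrant-cofibrant $Y$. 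By naturality and the fact that every object is isomorphic in the homotopy category to a fibrant-cofibrant one, $\zh$ and $\ze$ are isomorphisms everywhere, so $(\mathbb{L}^{\op{K}}F,\mathbb{R}^{\op{K}}G)$ is an adjoint equivalence, in particular an equivalence of categories.

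\textbf{Main obstacle.} The routine part is the bookkeeping that identifies the derived unit and counit with the concrete composites of adjunction units and (co)fibrant-replacement maps; this requires tracking the universal property \eqref{U1} of the Kan extension together with the adjunction triangle identities, and is where most of the verification lies. The genuinely substantive input is the single application of the Quillen-equivalence criterion (the ``$f$ is a weak equivalence iff $f_\flat$ is'' clause) to turn a weak equivalence on the ${\tt N}$ side into one on the ${\tt M}$ side; everything else is formal. I expect the main care to be needed in checking that the various natural transformations in play (the counit $\ze$ of the Kan extension, which is an isomorphism only on cofibrant objects, and the analogous dual one for $G$) compose correctly so that the reduction to ${\tt M}_{\op{fc}}$ is legitimate — in particular that no hidden (co)fibrant replacement spoils the identification of $\mathbb{L}^{\op{K}}F(X)$ with $\zg_{\tt N}(FX)$ when $X$ is already fibrant-cofibrant.
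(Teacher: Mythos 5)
The paper states this proposition without supplying a proof (it is a standard result, found e.g.\ in Hovey's book, and the paper treats it as known), so there is no in-paper argument to compare against. Your proposal is the standard and correct proof: reduce to checking the derived unit and counit on fibrant-cofibrant objects, identify the derived unit at a cofibrant $X$ with $\zg_{\tt M}$ of the adjunct of a fibrant replacement $r_{FX}:FX\stackrel{\sim}{\rightarrowtail}\tilde F(FX)$ (using that $FX$ is again cofibrant since $F$ is left Quillen, so no cofibrant replacement intrudes), invoke the Quillen-equivalence clause to conclude the adjunct is a weak equivalence, and propagate by naturality. The one place you flagged as needing care — verifying that the derived unit really is the composite $X\to GFX\to G\tilde F(FX)$, via the universal property of the Kan extension and the triangle identities — is indeed where the bookkeeping lives, but it goes through exactly as you indicate using the paper's Theorem \ref{Fundamental0} (which gives $\mathbb{L}^{\op{K}}F\circ\zg_{\tt M}=\zg_{\tt N}\circ F\circ\tilde C$ on the nose) and its dual for $\mathbb{R}^{\op{K}}G$. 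No gap.
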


A result similar to Proposition \ref{ExistKDerFun} holds for the {\bf presentation $({\tt Ho_{\op{K}}(M)},\cL_{\tt M})$ of $({\tt M}[W^{-1}],L_{\tt M})$}. If $i:{\tt M}_{\op{c}}\hookrightarrow\tt M$ is the canonical inclusion functor, we have:

\begin{prop}\label{ExistKDerFunK}
If $F\in\tt Fun(M,N)$ {\bf sends weak equivalences between cofibrant objects to weak equivalences}, the total left derived functor $$\mathbb{L}^{\op{K}}F=\mathbf{L}^{\op{K}}(\cL_{\tt N}\circ F\circ i)=(\cL_{\tt M}\circ i)_\star(\cL_{\tt N}\circ F\circ i)\in\tt Fun(Ho_{\op{K}}(M),Ho_{\op{K}}(N))$$ {\bf exists and is unique up to unique natural isomorphism}. More precisely (Subsection \ref{KFCatW}), the {\small K} derived functor of $F$ comes with a {\bf natural transformation} \be\label{U1}\ze:\mathbb{L}^{\op{K}}F\circ\cL_{\tt M}\circ i\Rightarrow \cL_{\tt N}\circ F\circ i\;,\ee the pair $(\mathbb{L}^{\op{K}}F,\ze)$ is universal and it is this universal pair that is unique.
\end{prop}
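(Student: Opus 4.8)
The plan is to reduce Proposition~\ref{ExistKDerFunK} to Proposition~\ref{ExistKDerFun}, which we already have (modulo its deferred proof), by exploiting the fact that both $({\tt Ho}(\tt M),\zg_{\tt M})$ and $({\tt Ho}_{\op{K}}(\tt M),\cL_{\tt M})$ are presentations of the same object ${\tt M}[W^{-1}]$ and, crucially, that by Corollary~\ref{BasicCor} the pair $({\tt Ho}_{\op{K}}(\tt M),\cL_{\tt M}\circ i)$ is a weak (hence faint) localization of ${\tt M}_{\op{c}}$ at $W$. The point is that the right Kan extension of $\cL_{\tt N}\circ F\circ i$ along $\cL_{\tt M}\circ i$ is, by the general formalism of Subsection~\ref{KFCatW}, a purely universal-property notion, and universal objects transport along equivalences of categories. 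So I would first fix once and for all the equivalence ${\tt Ho}(\tt M)\approx{\tt Ho}_{\op{K}}(\tt M)$ coming from Proposition~\ref{UEUI} (both are faint localizations of $\tt M$ at $W$), together with the compatible equivalence on the $\tt N$ side, and then argue that a universal pair $((\cL_{\tt M}\circ i)_\star(\cL_{\tt N}\circ F\circ i),\ze)$ corresponds under these equivalences to a universal pair for the right Kan extension of $\zg_{\tt N}\circ F$ along $\zg_{\tt M}$, whose existence and uniqueness is exactly Proposition~\ref{ExistKDerFun}.

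Concretely, the key steps in order: (1) record that since $F$ sends weak equivalences between cofibrant objects to weak equivalences, the composite $\cL_{\tt N}\circ F\circ i\colon{\tt M}_{\op{c}}\to{\tt Ho}_{\op{K}}(\tt N)$ is a well-defined object of the functor category over which we take the Kan extension, and that the same hypothesis feeds Proposition~\ref{ExistKDerFun}; (2) invoke Corollary~\ref{BasicCor} so that $\cL_{\tt M}\circ i$ plays the role of a faint localization functor out of ${\tt M}_{\op{c}}$, which is the setting of Subsection~\ref{KFCatW} in which $\mathbf{L}^{\op{K}}$ is defined; (3) observe that precomposition $(-)\circ(\cL_{\tt M}\circ i)$ and precomposition $(-)\circ\zg_{\tt M}$ (after restricting the source along $i$) are intertwined by the equivalence $-\circ\tilde L\colon{\tt Fun}({\tt Ho}(\tt M),{\tt Ho}(\tt N))\to{\tt Fun}({\tt Ho}_{\op{K}}(\tt M),{\tt Ho}_{\op{K}}(\tt N))$ induced by the two localization equivalences, up to natural isomorphism; (4) conclude that the left adjoint (= right Kan extension operation) on one side exists if and only if it does on the other, and that the universal pair is carried to a universal pair, preserving the up-to-unique-natural-isomorphism uniqueness since equivalences are fully faithful. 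Step~(4) also produces the natural transformation $\ze\colon\mathbb{L}^{\op{K}}F\circ\cL_{\tt M}\circ i\Rightarrow\cL_{\tt N}\circ F\circ i$ as the image of the $\ze$ of Proposition~\ref{ExistKDerFun} under the equivalence.

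The main obstacle I anticipate is the bookkeeping in step~(3): I must be careful that the equivalence ${\tt Ho}(\tt M)\approx{\tt Ho}_{\op{K}}(\tt M)$ is compatible with the two localization functors in the precise sense that $\tilde L\circ\zg_{\tt M}\cong\cL_{\tt M}$ as functors $\tt M\to{\tt Ho}_{\op{K}}(\tt M)$, and then restrict this natural isomorphism along $i\colon{\tt M}_{\op{c}}\hookrightarrow\tt M$ to get $\tilde L\circ\zg_{\tt M}\circ i\cong\cL_{\tt M}\circ i$; this uses exactly the faint universality data from Proposition~\ref{UEUI} (the natural isomorphism \eqref{LInd1}) rather than a strict equality, so the Kan-extension transport argument must be phrased in the ``commutes up to natural isomorphism'' formalism throughout, never assuming strict commutation. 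A secondary, more routine point is checking that on the $\tt N$ side the same coherent choice of equivalence makes $\cL_{\tt N}\circ F\circ i$ correspond to $\zg_{\tt N}\circ F$ restricted along $i$ up to natural isomorphism; once both intertwinings are in place, the existence, uniqueness and universality all follow formally, and I would simply say ``the remainder is a routine translation along the equivalences, mutatis mutandis as in Proposition~\ref{ExistKDerFun}.'' The dual statement for right derived functors and for ${\tt M}_{\op{f}}$ is entirely analogous and I would mention it in one sentence.
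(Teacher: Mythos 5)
Your proposal contains a genuine gap that I don't think you can route around without essentially redoing the paper's argument from scratch.

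The key difficulty is a \emph{domain mismatch} in the two Kan extensions you want to intertwine. Proposition \ref{ExistKDerFun} is about the right Kan extension $(\zg_{\tt M})_*(\zg_{\tt N}\circ F)$, i.e.\ along $\zg_{\tt M}:{\tt M}\to{\tt Ho(M)}$, whose domain is all of $\tt M$. Proposition \ref{ExistKDerFunK} is about the right Kan extension $(\cL_{\tt M}\circ i)_*(\cL_{\tt N}\circ F\circ i)$, i.e.\ along $\cL_{\tt M}\circ i:{\tt M}_{\op{c}}\to{\tt Ho}_{\op{K}}({\tt M})$, whose domain is the full subcategory ${\tt M}_{\op{c}}$. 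Your transport-along-equivalences argument can indeed absorb the difference between ${\tt Ho(M)}$ and ${\tt Ho}_{\op{K}}({\tt M})$ (and between ${\tt Ho(N)}$ and ${\tt Ho}_{\op{K}}({\tt N})$): post-composition with an equivalence and an isomorphism of the localization functors do carry a universal pair to a universal pair, for Kan extensions along \emph{the same} source functor up to that equivalence. What it cannot absorb is the passage from ${\tt M}$ to ${\tt M}_{\op{c}}$. Your step~(3), ``$-\circ\zg_{\tt M}$ after restricting the source along $i$,'' silently replaces the Kan extension along $\zg_{\tt M}$ by the Kan extension along $\zg_{\tt M}\circ i$, and that is \emph{not} what Proposition \ref{ExistKDerFun} gives you. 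Kan extensions are not in general stable under restriction of the indexing category, so the claim that the two agree is a nontrivial additional statement that neither you nor the paper establishes directly, and cannot be waved away as ``routine translation.''

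The paper's actual route (Theorem \ref{FundamentalA} in Subsection \ref{SSDerFunComp}) stays entirely on the ${\tt M}_{\op{c}}$ side and avoids this pitfall altogether. By Corollary \ref{BasicCor}, the pair $({\tt Ho}_{\op{K}}({\tt M}),\cL_{\tt M}\circ i)$ is not merely a faint but a \emph{weak} localization of ${\tt M}_{\op{c}}$, hence $-\circ(\cL_{\tt M}\circ i)$ is fully faithful. Proposition \ref{EUFDerFun} already gives the F derived functor $\mathbb{L}^{\op{F}}F$ with its natural isomorphism $\imath:\mathbb{L}^{\op{F}}F\circ\cL_{\tt M}\circ i\stackrel{\cong}{\Rightarrow}\cL_{\tt N}\circ F\circ i$. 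The fully faithfulness then shows immediately that, for any $G$ and any $\xi:G\circ\cL_{\tt M}\circ i\Rightarrow\cL_{\tt N}\circ F\circ i$, the composite $\imath^{-1}\circ\xi$ factors uniquely as $\zz\star(\cL_{\tt M}\circ i)$, so $(\mathbb{L}^{\op{F}}F,\imath)$ is a universal (i.e.\ right-Kan-extension) pair. This is short, self-contained, uses exactly the weak-localization refinement of Corollary \ref{BasicCor} that you invoked only as ``background,'' and never needs to compare Kan extensions over the two different domains ${\tt M}$ and ${\tt M}_{\op{c}}$. I would recommend you abandon the transport idea and argue along those lines instead.
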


We will also prove Proposition \ref{ExistKDerFunK} in Subsection \ref{SSDerFunComp}.

\subsubsection{F derived functors in model categories}

We will use the {\bf presentation $({\tt Ho_{\op{K}}(M)},\cL_{\tt M})$ of the localization $({\tt M}[W^{-1}],L_{\tt M})$}.\medskip

Again, just as in general categories with distinguished families of morphisms, we can define the total derived functor $$\mathbb{L}F\in\tt Fun(Ho_{\op{K}}(M),Ho_{\op{K}}(N))$$ using the faint universal property of $\tt Ho_{\op{K}}(M)\,.$ More precisely, let $i:{\tt M}_{\op{c}}\hookrightarrow \tt M$ be as usual the canonical inclusion functor of the full subcategory of cofibrant objects. If $F\in\tt Fun(M,N)$ sends weak equivalences between cofibrant objects to weak equivalences, the composite of $F\circ i\in\tt Fun(M_{\op{c}},N)$ and $\cL_{\tt N}\in\tt Fun(N,Ho_{\op{K}}(N))\,$ sends weak equivalences to isomorphisms. Hence it factors up to a natural isomorphism through ${\tt M_{\op{c}}}[W^{-1}]$ thus leading to a functor $$\mathbf{L}^{\op{F}}(\cL_{\tt N}\circ F\circ i)\in{\tt Fun(M_{\op{c}}}[W^{-1}],\tt Ho_{\op{K}}(N))\;.$$ Let \be\label{NatIso0}\imath:\mathbf{L}^{\op{F}}(\cL_{\tt N}\circ F\circ i)\circ L_{\tt M_{\op{c}}}\stackrel{\cong}{\Rightarrow}\cL_{\tt N}\circ F\circ i\;\ee be this natural isomorphism. From Subsection \ref{KFCatW} we know that the pair $(\mathbf{L}^{\op{F}}(\cL_{\tt N}\circ F\circ i),\imath)$ is unique up to unique natural isomorphism, of course, provided the localization $({\tt M_{\op{c}}}[W^{-1}],L_{\tt M_{\op{c}}})$ exists.\medskip

However, in view of Corollary \ref{BasicCor} the {\bf pair $({\tt Ho_{\op{K}}(M)},\cL_{\tt M}\circ\, i)$ is a presentation of the localization $({\tt M_{\op{c}}}[W^{-1}],L_{\tt M_{\op{c}}})$} of the cofibration category $\tt M_{\op{c}}$ \cite{JL}. A similar result holds for $j:{\tt M_{\op{f}}\hookrightarrow M}\,.$ Many authors denote these presentations by $(\tt Ho_{\op{K}}(M_{\op{c}}),\cL_{M_{\op{c}}})$ and $(\tt Ho_{\op{K}}(M_{\op{f}}),\cL_{M_{\op{f}}})\,,$ respectively.\medskip

The next proposition follows from what we just said.

\begin{prop}\label{EUFDerFun} If $F\in\tt Fun(M,N)$ {\bf sends weak equivalences between cofibrant objects to weak equivalences}, the {\bf {\small F} total derived functor} $$\mathbb{L}^{\op{F}} F:=\mathbf{L}^{\op{F}}(\cL_{\tt N}\circ F\circ i)\in\tt Fun(Ho_{\op{K}}(M),Ho_{\op{K}}(N))$$ together with the {\bf natural isomorphism} \be\label{U2}\imath:\mathbb{L}^{\op{F}} F\circ \cL_{\tt M}\circ i\stackrel{\cong}{\Rightarrow}\cL_{\tt N}\circ F\circ i\;\ee are {\bf unique up to unique natural isomorphism}.
\end{prop}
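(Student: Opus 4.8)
The statement is essentially a direct consequence of the constructions already assembled in Subsection~\ref{KFCatW} together with Corollary~\ref{BasicCor}; the work is bookkeeping rather than substance. The plan is to unwind the definition of $\mathbb{L}^{\op{F}}F$ and identify $(\mathbb{L}^{\op{F}}F,\imath)$ with the faint-universal factorization of the functor $\cL_{\tt N}\circ F\circ i\in[{\tt M}_{\op{c}},{\tt Ho}_{\op{K}}({\tt N})]_W$ along the faint localization functor of ${\tt M}_{\op{c}}$ at $W$. Once this identification is made, the uniqueness assertion is exactly condition (L2) of Definition~\ref{Faint}, which we are entitled to use because, by Corollary~\ref{BasicCor}, the pair $({\tt Ho}_{\op{K}}({\tt M}),\cL_{\tt M}\circ i)$ is a (weak, hence faint) localization of ${\tt M}_{\op{c}}$ at $W$.

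\textbf{Key steps.} First I would check that the hypothesis on $F$ makes the composite $\cL_{\tt N}\circ F\circ i$ an object of $[{\tt M}_{\op{c}},{\tt Ho}_{\op{K}}({\tt N})]_W$: if $w$ is a weak equivalence between cofibrant objects, then $F(w)$ is a weak equivalence of ${\tt N}$ by assumption, hence $\cL_{\tt N}(F(w))$ is an isomorphism in ${\tt Ho}_{\op{K}}({\tt N})$ because $\cL_{\tt N}$ is a localization functor (Theorem~\ref{KanWeak}). Second, I would invoke Corollary~\ref{BasicCor}: the pair $({\tt Ho}_{\op{K}}({\tt M}),\cL_{\tt M}\circ i)$ \emph{is} the faint localization $({\tt M}_{\op{c}}[W^{-1}],L_{{\tt M}_{\op{c}}})$ of ${\tt M}_{\op{c}}$ at $W$, so applying the faintly universal property of Definition~\ref{Faint} to the pair $({\tt Ho}_{\op{K}}({\tt N}),\cL_{\tt N}\circ F\circ i)$ yields a functor $\tilde{F}\in\tt Fun(Ho_{\op{K}}(M),Ho_{\op{K}}(N))$ together with a natural isomorphism $\imath:\tilde{F}\circ\cL_{\tt M}\circ i\stackrel{\cong}{\Rightarrow}\cL_{\tt N}\circ F\circ i$; by definition this $\tilde{F}$ is precisely $\mathbf{L}^{\op{F}}(\cL_{\tt N}\circ F\circ i)=\mathbb{L}^{\op{F}}F$, and $\imath$ is the natural isomorphism \eqref{U2} (this is the same $\imath$ as in \eqref{NatIso0} after transporting along the identification of presentations). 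Third and finally, condition (L2) of Definition~\ref{Faint}, applied again to this situation, says exactly that the pair $(\mathbb{L}^{\op{F}}F,\imath)$ is unique up to unique natural isomorphism: if $(\tilde{F}',\imath')$ is another such pair, there is a unique natural isomorphism $\ze:\mathbb{L}^{\op{F}}F\stackrel{\cong}{\Rightarrow}\tilde{F}'$ with $(\ze\star(\cL_{\tt M}\circ i))\circ\imath=\imath'$. That is the whole claim.

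\textbf{Main obstacle.} There is no genuine mathematical difficulty here; the only point requiring care is the consistent translation between the abstract localization ${\tt M}_{\op{c}}[W^{-1}]$ used in Subsection~\ref{KFCatW} and its concrete presentation $({\tt Ho}_{\op{K}}({\tt M}),\cL_{\tt M}\circ i)$ supplied by Corollary~\ref{BasicCor}. One must be sure that the universal pair $(\mathbf{L}^{\op{F}}(\cL_{\tt N}\circ F\circ i),\imath)$ of \eqref{NatIso0} transports to a universal pair along $\cL_{\tt M}\circ i$ with the \emph{same} uniqueness property, which is legitimate because, per the Warning in the introduction, presentations of a localization are identified via their canonical comparison equivalences and the uniqueness clause (L2) is preserved under such identification (cf.\ the argument following Corollary~\ref{BasicCor}). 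Beyond this, the proof is a one-line appeal to Corollary~\ref{BasicCor} and Definition~\ref{Faint}, so I would keep it short.
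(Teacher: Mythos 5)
Your proposal is correct and matches the paper's argument: the paper also reduces Proposition~\ref{EUFDerFun} to the faint universality of $({\tt Ho}_{\op{K}}({\tt M}),\cL_{\tt M}\circ i)$ as a localization of ${\tt M}_{\op{c}}$ at $W$, citing Corollary~\ref{BasicCor} and the general discussion of $\mathbf{L}^{\op{F}}$ in Subsection~\ref{KFCatW}, and labels the proposition as following ``from what we just said.'' Your explicit attention to why the hypothesis puts $\cL_{\tt N}\circ F\circ i$ into $[{\tt M}_{\op{c}},{\tt Ho}_{\op{K}}({\tt N})]_W$ and to the transport of the universal pair along the canonical identification of presentations spells out precisely the points the paper leaves implicit.
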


Of course, if $F:{\tt M\rightleftarrows N}:G$ is a Quillen adjunction, then $F$ respects trivial cofibrations, hence satisfies the previous condition, so that $\mathbb{L}^{\op{F}}F:\tt Ho_{\op{K}}(M)\to Ho_{\op{K}}(N)$ exists. Dually, the total right derived functor $\mathbb{R}^{\op{F}}G:\tt Ho_{\op{K}}(N)\to Ho_{\op{K}}(M)$ exists:

\begin{prop} If $F:{\tt M\rightleftarrows N}:G$ is a Quillen adjunction, the functor $\mathbb{L}^{\op{F}}F$ is left adjoint to the functor $\mathbb{R}^{\op{F}}G\,.$ The derived functors are an equivalence of categories if the Quillen adjunction considered is a Quillen equivalence.\end{prop}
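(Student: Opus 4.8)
The functors $\mathbb{L}^{\op{F}}F$ and $\mathbb{R}^{\op{F}}G$ exist by Proposition~\ref{EUFDerFun} and its dual. The plan is to produce, for fibrant--cofibrant objects $X\in{\tt M}$ and $Y\in{\tt N}$, a bijection $\Hom_{{\tt Ho}_{\op{K}}({\tt N})}(\mathbb{L}^{\op{F}}F(X),Y)\cong\Hom_{{\tt Ho}_{\op{K}}({\tt M})}(X,\mathbb{R}^{\op{F}}G(Y))$, natural in $X$ and $Y$, assembled out of the given Quillen adjunction, and then to upgrade it to an adjoint equivalence under the Quillen equivalence hypothesis. First I would record how the derived functors act on objects: by the construction of Proposition~\ref{EUFDerFun} together with Corollary~\ref{BasicCor}, one has $\mathbb{L}^{\op{F}}F(X)=\cL_{\tt N}(FX)$ and $\mathbb{R}^{\op{F}}G(Y)=\cL_{\tt M}(GY)$ on fibrant--cofibrant objects. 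Since $F$ is left Quillen, $FX$ is cofibrant, so $\cL_{\tt N}(FX)=\tilde F\tilde C(FX)=\tilde F(FX)$ is merely a fibrant replacement of $FX$ with structure map the trivial cofibration $f_{FX}\colon FX\stackrel{\sim}{\rightarrowtail}\cL_{\tt N}(FX)$; dually $GY$ is fibrant and $\mathbb{R}^{\op{F}}G(Y)=\tilde F\tilde C(GY)$ is a fibrant--cofibrant replacement of $GY$, fitting in a zigzag $GY\stackrel{\sim}{\twoheadleftarrow}\tilde C(GY)\stackrel{\sim}{\rightarrowtail}\cL_{\tt M}(GY)$ of weak equivalences.

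Next I would assemble the adjunction bijection as a composite of three bijections. First, $\Hom_{{\tt Ho}_{\op{K}}({\tt N})}(\mathbb{L}^{\op{F}}F(X),Y)=\Hom_{\tt N}(\cL_{\tt N}(FX),Y)/\!\simeq$, and precomposition with $f_{FX}$ is a bijection onto $\Hom_{\tt N}(FX,Y)/\!\simeq$ by the dual of \eqref{1:1HC} ($Y$ is fibrant, and since $FX$ and $\cL_{\tt N}(FX)$ are cofibrant and $Y$ fibrant, right homotopy agrees with homotopy on the relevant hom-sets). Second, the Quillen adjunction bijection $\Hom_{\tt N}(FX,Y)\cong\Hom_{\tt M}(X,GY)$ descends to homotopy classes because $X$ is cofibrant and $Y$ is fibrant: by Brown's lemma a left Quillen functor sends a cylinder object of a cofibrant object to a cylinder object, and dually a right Quillen functor sends a good path object of a fibrant object to a path object, so $F$ preserves left homotopies out of cofibrant objects and $G$ preserves right homotopies into fibrant objects, compatibly with unit and counit (cf.\ \cite{DS}, \cite{Ho99}). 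Third, lifting the trivial fibration $\tilde C(GY)\stackrel{\sim}{\twoheadrightarrow}GY$ against the trivial cofibration $\tilde C(GY)\stackrel{\sim}{\rightarrowtail}\cL_{\tt M}(GY)$ and using two-out-of-three, one obtains a weak equivalence $s\colon\cL_{\tt M}(GY)\stackrel{\sim}{\to}GY$ from a fibrant--cofibrant object to a fibrant one; postcomposition with $s$ is then a bijection $\Hom_{\tt M}(X,\cL_{\tt M}(GY))/\!\simeq\ \cong\ \Hom_{\tt M}(X,GY)/\!\simeq$, as one sees by factoring $s$ as a trivial cofibration followed by a trivial fibration and applying Whitehead's theorem to the first factor (a weak equivalence between fibrant--cofibrant objects, hence a homotopy equivalence) and \eqref{1:1HC} to the second. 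Composing the three bijections, and checking---routinely, from the description of $\mathbb{L}^{\op{F}}F$ and $\mathbb{R}^{\op{F}}G$ on morphisms via lifts---that the result is natural in $X$ and $Y$, exhibits $\mathbb{L}^{\op{F}}F$ as left adjoint to $\mathbb{R}^{\op{F}}G$.

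For the last assertion I would use that an adjunction is an equivalence of categories exactly when its unit and counit are natural isomorphisms. The counit at $Y$ is the image of $\id_{\mathbb{R}^{\op{F}}G(Y)}$ under the bijection just built; tracing it backwards through the three steps, it is represented by a composite of canonical weak equivalences---hence of isomorphisms in ${\tt Ho}_{\op{K}}({\tt N})$---together with the morphism $s^{\sharp}\colon F(\cL_{\tt M}(GY))\to Y$ adjoint to the weak equivalence $s$ above. Since $\cL_{\tt M}(GY)$ is cofibrant and $GY$ is fibrant, the Quillen equivalence hypothesis---which says exactly that the adjunction bijection detects weak equivalences in this situation---forces $s^{\sharp}$ to be a weak equivalence, so the counit is an isomorphism; the unit is handled dually, with the trivial cofibration $f_{FX}$ in the role of $s$. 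Hence $\mathbb{L}^{\op{F}}F$ and $\mathbb{R}^{\op{F}}G$ form an equivalence of categories. I expect the second step of the middle paragraph---homotopy-invariance of the Quillen adjunction bijection between cofibrant and fibrant objects---to be the main obstacle, as it carries the genuine model-categorical content (preservation of cylinder and path objects, via Brown's lemma) and its weak-equivalence sharpening is precisely what drives the equivalence statement; a secondary nuisance is keeping track of the canonical weak equivalences while tracing the unit and counit.
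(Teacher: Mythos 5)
The paper states this proposition without proof, treating it as a standard fact from the cited references (\cite{JL}, \cite{DS}, \cite{Ho99}). Your argument is essentially the classical proof and is, as far as I can tell, correct: you compute $\mathbb{L}^{\op{F}}F$ and $\mathbb{R}^{\op{F}}G$ on fibrant--cofibrant objects using the explicit construction of $\mathbf{L}^{\op{F}}$ from the proof of Theorem \ref{KanWeak} and Corollary \ref{BasicCor}, assemble the derived adjunction bijection as a three-step composite, and trace unit and counit to deduce the equivalence statement. A few small points worth making explicit if you were to write this up fully. First, in the third step you need to record that the intermediate object $Z$ in the factorization $s=p\circ j$ is itself fibrant--cofibrant ($Z$ is cofibrant as $0\rightarrowtail\cL_{\tt M}(GY)\rightarrowtail Z$ is a composite of cofibrations, and fibrant since $Z\twoheadrightarrow GY\twoheadrightarrow\ast$), so that Whitehead's theorem applies to $j$; you gesture at this but it is the hinge. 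Second, and this is the point you correctly flag as the real content, the descent of the Quillen adjunction bijection to homotopy classes requires both directions: the cylinder-preservation argument gives $f_\flat\simeq^\ell g_\flat\Rightarrow f\simeq^\ell g$, while the path-object argument gives $f\simeq^r g\Rightarrow f_\flat\simeq^r g_\flat$; it is only because left and right homotopy coincide (cofibrant source, fibrant target) that these two one-sided implications close up into a bijection of honest homotopy classes. Third, the naturality check, while routine, does require using that $\mathbb{L}^{\op{F}}F[g]=[\tilde{F}(Fg)]$ on fibrant--cofibrant objects (where the inner $\tilde{C}$ collapses since $FX$ is cofibrant), compatibly with the lifts in diagram \eqref{CFTilde}; this is where the replacement machinery of Subsection \ref{Rep} has to be fed in explicitly.
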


\begin{rem}\label{LRFDF2}
\emph{Notice that the left {\small F} derived functor $\mathbb{L}^{\op{F}}F$ is defined by means of the inclusion functor $i:{\tt M}_{\op{c}}\hookrightarrow\tt M\,,$ whereas the right {\small F} derived functor $\mathbb{R}^{\op{F}}G$ uses the inclusion functor $j:{\tt M}_{\op{f}}\hookrightarrow\tt M\,.$ Therefore the distinction between left and right is important here (see Remark \ref{LRFDF1}).}
\end{rem}

\subsubsection{S derived functors in model categories}\label{SSUDerFun}

In model categories we have still another option for defining total derived functors. Indeed, as mentioned before, the {\bf pair $({\tt Ho(M)},\zg_{\tt M})$} associated to any model category $\tt M$ is not only a presentation of the faint localization $({\tt M}[W_{\tt M}^{-1}],L_{\tt M})$ of $\tt M$ at its weak equivalences $W_{\tt M}\,,$ it {\bf is also the strong localization} $({\tt M}[[W_{\tt M}^{-1}]],L_{\tt M})\,.$ In view of Remark \ref{ZigSubMod}, the strong localization $({\tt S}[[W_{\tt S}^{-1}]],L_{\tt S})$ of a subcategory $\tt S$ of a model category $\tt M$ at the class $W_{\tt S}$ of $\tt S$-morphisms that belong to $W_{\tt M}$, is given by the zigzag construction used for the strong localization $({\tt Ho(M)},\zg_{\tt M})$ of $\tt M$ and it is denoted a bit abusively by $({\tt Ho(S)},\zg_{\tt S})\,.$ Thus we can fall back for instance on the strong localization $({\tt Ho(M_{\op{c}})},\zg_{\tt M_{\op{c}}})$ of $\tt M_{\op{c}}$ at the class $W_{\tt M_{\op{c}}}$ of morphisms in $W_{\tt M}$ that act between cofibrant objects. In the following we will denote both classes $W_{\tt M_{\op{c}}}$ and $W_{\tt M}$ by $W\,.$\medskip

Hence, if $F\in\tt Fun(M,N)$ sends weak equivalences between cofibrant objects to weak equivalences, then $\zg_{\tt N}\circ F\circ i\in \tt Fun(M_{\op{c}},Ho(N))$ sends weak equivalences to isomorphisms and therefore factors {\it uniquely} and {\it on the nose} through $\tt Ho(M_{\op{c}})\,,$ i.e., there exists a unique functor $$\op{Ho}(F):=\op{Ho}(\zg_{\tt N}\circ F\circ i)\in\tt Fun(Ho(M_{\op{c}}),Ho(N))\;,$$ such that \be\label{NatIso1}\op{Ho}(F)\circ\zg_{\tt M_{\op{c}}}=\zg_{\tt N}\circ F\circ i\;.\ee

There is an equivalence of categories $\tt Ho(M_{\op{c}})\approx Ho(M)\,.$ The functor from $\tt Ho(M_{\op{c}})$ to $\tt Ho(M)$ is the unique factorization $\op{Ho}(i):=\op{Ho}(\zg_{\tt M}\circ i)$ through $\tt Ho(M_{\op{c}})$ of $\zg_{\tt M}\circ i\in\tt Fun(M_{\op{c}},Ho(M))\,.$ It is well-known that the quasi-inverse of an equivalence of categories is unique up to isomorphism (the quasi-inverse of an adjoint equivalence of categories is unique up to unique isomorphism).

\begin{defi} If $F\in\tt Fun(M,N)$ {\bf sends weak equivalences between cofibrant objects to weak equivalences} and if $\,\cI\in\tt Fun(Ho(M), Ho(M_{\op{c}}))$ is a {\bf quasi-inverse} of $\,\op{Ho}(i)\,,$  we define the {\bf {\small S} total derived functor} of $F$ by $$\mathbb{L}^{\op{S}}_\cI F:=\op{Ho}(F)\circ \cI\in\tt Fun(Ho(M),Ho(N))\;.$$
\end{defi}

If $\cJ$ is another quasi-inverse, there is a natural isomorphism $\mathfrak{i}:\cI\stackrel{\cong}{\Rightarrow}\cJ$ and $\zk:=\op{Ho(F)}\,\star\,\mathfrak{i}\,$ is a natural isomorphism $\zk:\mathbb{L}^{\op{S}}_\cI F\stackrel{\cong}{\Rightarrow}\mathbb{L}^{\op{S}}_\cJ F\,.$ Hence:

\begin{prop} If $F\in\tt Fun(M,N)$ sends weak equivalences between cofibrant objects to weak equivalences, the total derived functor $$\mathbb{L}^{\op{S}} F\in\tt Fun(Ho(M),Ho(N))\;$$ {\bf exists and is essentially unique}, i.e., whatever quasi-inverse of $\op{Ho}(i)$ we choose to compute the derived functor we get a representative in the same isomorphism class. 
\end{prop}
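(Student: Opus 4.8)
The plan is to reduce the statement to two easy observations: existence of at least one quasi-inverse, and the fact that any two choices yield naturally isomorphic derived functors. The first is immediate from the preceding discussion: since $\op{Ho}(i):\tt Ho(M_{\op{c}})\to Ho(M)$ is an equivalence of categories, it admits a quasi-inverse $\cI\in\tt Fun(Ho(M),Ho(M_{\op{c}}))$, and then $\mathbb{L}^{\op{S}}_\cI F:=\op{Ho}(F)\circ\cI$ is a well-defined object of $\tt Fun(Ho(M),Ho(N))$. Here $\op{Ho}(F)$ exists and is well-defined by \eqref{NatIso1}, using that $\zg_{\tt N}\circ F\circ i$ sends weak equivalences to isomorphisms (this is exactly the hypothesis on $F$, together with the fact that $\tt Ho(M_{\op{c}})$ is the strong localization of $\tt M_{\op{c}}$, cf.\ Remark \ref{ZigSubMod}).

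Next I would establish essential uniqueness. Let $\cI$ and $\cJ$ be two quasi-inverses of $\op{Ho}(i)$. By the cited standard fact that the quasi-inverse of an equivalence of categories is unique up to natural isomorphism, there is a natural isomorphism $\mathfrak{i}:\cI\stackrel{\cong}{\Rightarrow}\cJ$. Whiskering on the left with $\op{Ho}(F)$ gives a natural isomorphism
\be
\zk:=\op{Ho}(F)\star\mathfrak{i}:\;\op{Ho}(F)\circ\cI\;\stackrel{\cong}{\Rightarrow}\;\op{Ho}(F)\circ\cJ\,,
\ee
that is, $\zk:\mathbb{L}^{\op{S}}_\cI F\stackrel{\cong}{\Rightarrow}\mathbb{L}^{\op{S}}_\cJ F$. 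Since whiskering a natural isomorphism with a functor again yields a natural isomorphism, this $\zk$ is indeed invertible, so the two derived functors lie in the same isomorphism class of $\tt Fun(Ho(M),Ho(N))$. This is precisely the assertion that $\mathbb{L}^{\op{S}}F$ is essentially unique, and it is also all that the statement claims, so the proof is complete. (One could remark in passing that if one works with an \emph{adjoint} equivalence $\op{Ho}(i)$, the quasi-inverse is unique up to \emph{unique} isomorphism, and then $\zk$ is likewise the unique such natural isomorphism; but this refinement is not needed here.)

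I do not anticipate any real obstacle: the argument is formal, relying only on the basic $2$-categorical calculus of whiskering and the uniqueness of quasi-inverses. The only point deserving a line of care is verifying that $\op{Ho}(F)$ is genuinely available as input, i.e.\ that the hypothesis ``$F$ sends weak equivalences between cofibrant objects to weak equivalences'' does make $\zg_{\tt N}\circ F\circ i$ invert the weak equivalences of $\tt M_{\op{c}}$ — but this is exactly how $W$ was set up for $\tt M_{\op{c}}$, and the strong (zigzag) localization $\tt Ho(M_{\op{c}})$ then produces $\op{Ho}(F)$ uniquely via \eqref{NatIso1}. Everything else is bookkeeping, and the displayed formula $\zk=\op{Ho}(F)\star\mathfrak{i}$ already exhibits the required isomorphism.
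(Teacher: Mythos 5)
Your proof is correct and follows essentially the same argument as the paper: the paper also defines $\zk:=\op{Ho}(F)\star\mathfrak{i}$ from a natural isomorphism $\mathfrak{i}:\cI\stackrel{\cong}{\Rightarrow}\cJ$ between quasi-inverses of $\op{Ho}(i)$, and notes existence of $\op{Ho}(F)$ via the strong localization of $\tt M_{\op{c}}$. Nothing is missing.
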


\begin{rem}
\emph{Let us emphasize very clearly that $\mathbb{L}^{\op{S}} F$ is defined up to a natural isomorphism: $\mathbb{L}^{\op{S}}F$ stands for any of the $\mathbb{L}^{\op{S}}_\cI F\,,$ where $\cI$ is a quasi-inverse of $\op{Ho}(i)\,.$}
\end{rem}

For instance, every cofibrant F-replacement functor $Q:\tt M\to M_{\op{c}}$ induces a quasi-inverse of $\op{Ho}(i)\,.$ Indeed, as we interpret $Q$ here as a functor to $\tt M_{\op{c}}$ and not to $\tt M$ as before in this text, the natural weak equivalence $q:Q\stackrel{\sim}{\Rightarrow}\id_{\tt M}$ mentioned in Subsection \ref{Rep} can be written \be\label{NatIso2}q:i\circ Q\stackrel{\sim}{\Rightarrow}\id_{\tt M}\ee and it restricts to a natural weak equivalence $q\star i:Q\circ i\stackrel{\sim}{\Rightarrow}\id_{\tt M_{\op{c}}}.$ It is now easy to see that the unique factorization $\op{Ho}(Q):=\op{Ho}(\zg_{\tt M_{\op{c}}}\circ Q):\tt Ho(M)\to Ho(M_{\op{c}})$ is a quasi-inverse of $\op{Ho}(i)\,.$ Hence:

\begin{prop}\label{WCWQ} If $F\in\tt Fun(M,N)$ sends weak equivalences between cofibrant objects to weak equivalences and if $Q\in\tt Fun(M,M_{\op{c}})$ is a cofibrant F-replacement functor, the total derived functor of $F$ is given by \be\label{NatIso3}\mathbb{L}^{\op{S}}F\stackrel{\cong}{\Rightarrow}\mathbb{L}^{\op{S}}_Q F=\op{Ho}(F)\circ\op{Ho}(Q)\;.\ee Moreover, we have the {\bf equality} \be\label{U3}\mathbb{L}^{\op{S}}_QF\circ \zg_{\tt M}=\zg_{\tt N}\circ F\circ Q\;.\ee\end{prop}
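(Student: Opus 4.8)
The plan is to verify the two displayed assertions \eqref{NatIso3} and \eqref{U3} separately, relying on the fact established just above the statement that $\op{Ho}(Q) := \op{Ho}(\zg_{\tt M_{\op{c}}}\circ Q)$ is a quasi-inverse of $\op{Ho}(i)$. First I would record that \eqref{NatIso3} is essentially a restatement of the preceding remark combined with the definition of $\mathbb{L}^{\op{S}}F$: since $\op{Ho}(Q)$ is one admissible choice of quasi-inverse $\cI$, we have $\mathbb{L}^{\op{S}}_Q F = \op{Ho}(F)\circ\op{Ho}(Q)$ by definition of $\mathbb{L}^{\op{S}}_\cI F$, and the natural isomorphism $\mathbb{L}^{\op{S}}F\stackrel{\cong}{\Rightarrow}\mathbb{L}^{\op{S}}_Q F$ is exactly the one produced by the paragraph following the definition of the S derived functor (any two quasi-inverses of $\op{Ho}(i)$ differ by a natural isomorphism $\mathfrak{i}$, which whiskers to $\zk = \op{Ho}(F)\star\mathfrak{i}$). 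So the first half is a bookkeeping step with nothing to prove beyond unwinding definitions.

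The substantive content is the equality \eqref{U3}. Here I would compute the composite $\mathbb{L}^{\op{S}}_Q F\circ\zg_{\tt M}$ by substituting $\mathbb{L}^{\op{S}}_Q F = \op{Ho}(F)\circ\op{Ho}(Q)$ and then using the two strict (on-the-nose) factorization identities available: equation \eqref{NatIso1}, which reads $\op{Ho}(F)\circ\zg_{\tt M_{\op{c}}} = \zg_{\tt N}\circ F\circ i$, and the analogous defining identity $\op{Ho}(Q)\circ\zg_{\tt M} = \zg_{\tt M_{\op{c}}}\circ Q$ for the factorization of $\zg_{\tt M_{\op{c}}}\circ Q$ through the strong localization $\tt Ho(M)$. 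Chaining these gives
\[
\op{Ho}(F)\circ\op{Ho}(Q)\circ\zg_{\tt M}
=\op{Ho}(F)\circ\zg_{\tt M_{\op{c}}}\circ Q
=\zg_{\tt N}\circ F\circ i\circ Q
=\zg_{\tt N}\circ F\circ Q,
\]
the last step using that $i\circ Q = Q$ once $Q$ is regarded as a functor valued in $\tt M$ (equivalently, $i$ is the inclusion $\tt M_{\op{c}}\hookrightarrow\tt M$ and the composite $i\circ Q$ is just $Q$ viewed in $\tt M$, as in the discussion around \eqref{NatIso2}). This is an honest equality of functors, not merely a natural isomorphism, precisely because both $\op{Ho}(F)$ and $\op{Ho}(Q)$ are the \emph{strict} factorizations guaranteed by the strong (indeed strict) universal property of the Quillen homotopy category.

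The only point requiring a little care — and the step I would treat as the main (mild) obstacle — is making sure the two strict factorization identities are being used with compatible conventions: $\op{Ho}(Q)$ must be the factorization of $\zg_{\tt M_{\op{c}}}\circ Q$, where $Q\in\tt Fun(M,M_{\op{c}})$ is viewed as landing in $\tt M_{\op{c}}$ (so that $\zg_{\tt M_{\op{c}}}$ can be applied), whereas in the identity $i\circ Q = Q$ the symbol $Q$ denotes the same functor post-composed with $i$, i.e. viewed in $\tt M$. The paper has already flagged this change of viewpoint just before \eqref{NatIso2} ("as we interpret $Q$ here as a functor to $\tt M_{\op{c}}$ and not to $\tt M$ as before"), so I would simply invoke that remark explicitly at the relevant step. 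Everything else is formal composition of arrows, and the proof is complete once the displayed chain of equalities is written out.
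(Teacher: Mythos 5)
Your proposal is correct and follows essentially the same route as the paper: the paper derives \eqref{U3} by citing the strict factorization identity \eqref{NatIso4} (your $\op{Ho}(Q)\circ\zg_{\tt M}=\zg_{\tt M_{\op{c}}}\circ Q$) and then chaining it with \eqref{NatIso3} and \eqref{NatIso1}, exactly as you do, while \eqref{NatIso3} is, as you note, an unwinding of the definition of $\mathbb{L}^{\op{S}}_\cI F$ together with the observation preceding the proposition that $\op{Ho}(Q)$ is a quasi-inverse of $\op{Ho}(i)$. Your explicit handling of the $i\circ Q=Q$ viewpoint change is a useful clarification but does not depart from the paper's argument.
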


Indeed, the unique factorization $\op{Ho}(Q)$ satisfies \be\label{NatIso4}\op{Ho}(Q)\circ\zg_{\tt M}=\zg_{\tt M_{\op{c}}}\circ Q\;\ee and the previous equality follows from \eqref{NatIso3}, \eqref{NatIso4} and \eqref{NatIso1}.\medskip

Let us mention that since the {\small RHS} of \eqref{U3} can be written $\zg_{\tt N}\circ F\circ i\circ Q\,,$ we get from \eqref{NatIso3}, \eqref{U3} and \eqref{NatIso2} a natural transformation \be\label{NU2}\mathbb{L}^{\op{S}}F\circ \zg_{\tt M}\Rightarrow\zg_{\tt N}\circ F\;.\ee A similar observation can be deduced in the case of $\op{F}$ derived functors from the whiskering of the natural isomorphism $\imath$ in \eqref{U2} with $Q\,$: $$\imath\star Q:\mathbb{L}^{\op{F}} F\circ \cL_{\tt M}\circ i\circ Q\stackrel{\cong}{\Rightarrow}\cL_{\tt N}\circ F\circ i\circ Q\;.$$ Indeed, in view of \eqref{NatIso2} we have a natural isomorphism $$\mathbb{L}^{\op{F}} F\circ \cL_{\tt M}\circ i\circ Q\stackrel{\cong}{\Rightarrow}\mathbb{L}^{\op{F}} F\circ \cL_{\tt M}\;$$ and a natural transformation $$\cL_{\tt N}\circ F\circ i\circ Q\Rightarrow \cL_{\tt N}\circ F\;,$$ and so get a natural transformation \be\label{NU3}\mathbb{L}^{\op{F}} F\circ \cL_{\tt M}\Rightarrow\cL_{\tt N}\circ F\;.\ee {\it The interesting equations are \eqref{U2} and \eqref{U3} which are stronger than the $\op{S}$ and $\op{F}$ counterparts \eqref{NU2} and \eqref{NU3} of the $\op{K}$ equation \eqref{U1}.}\medskip

{\it In order to compute total derived functors one usually pre-composes the original functor with a cofibrant F-replacement functor $Q\,$.}\medskip

Indeed, the functor $\zg_{\tt N}\circ F\circ Q\in \tt Fun(M,Ho(N))$ sends weak equivalences to isomorphisms, so that there exists a unique functor \be\label{ExistKDF1}\op{Ho}(F\circ Q):=\op{Ho}(\zg_{\tt N}\circ F\circ Q)\in\tt Fun(Ho(M),Ho(N))\;,\ee such that \be\label{ExistKDF2}\op{Ho}(F\circ Q)\circ\zg_{\tt M}=\zg_{\tt N}\circ F\circ Q\;.\ee In view of Equation \eqref{U3} in Proposition \ref{WCWQ} we get:

\begin{prop}\label{CompDerFunP1} Under the assumptions of Proposition \ref{WCWQ}, the total derived functor of $F$ is given by \be\label{CompDerFunE1}\mathbb{L}^{\op{S}}F\stackrel{\cong}{\Rightarrow}\mathbb{L}_Q^{\op{S}}F=\op{Ho}(F)\circ\op{Ho}(Q)=\op{Ho}(F\circ Q)\;.\ee \end{prop}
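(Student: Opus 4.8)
The plan is to establish the chain of equalities and isomorphisms in \eqref{CompDerFunE1} by assembling three facts that are already available: the characterization \eqref{U3} of $\mathbb{L}_Q^{\op{S}}F$ from Proposition \ref{WCWQ}, the defining equation \eqref{ExistKDF2} of $\op{Ho}(F\circ Q)$, and the strictness (in particular the uniqueness part) of the Quillen localization proved in Theorem \ref{StrLocWeaLoc}. The middle equality $\op{Ho}(F)\circ\op{Ho}(Q)=\mathbb{L}_Q^{\op{S}}F$ is just the definition of $\mathbb{L}^{\op{S}}_QF$, and the leftmost relation $\mathbb{L}^{\op{S}}F\stackrel{\cong}{\Rightarrow}\mathbb{L}_Q^{\op{S}}F$ is \eqref{NatIso3}; so the only genuine content is the rightmost equality $\op{Ho}(F)\circ\op{Ho}(Q)=\op{Ho}(F\circ Q)$.

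To prove that equality I would argue by the universal (uniqueness) property of the strong localization $({\tt Ho(M)},\zg_{\tt M})$. Both $\op{Ho}(F)\circ\op{Ho}(Q)$ and $\op{Ho}(F\circ Q)$ are functors in $\tt Fun(Ho(M),Ho(N))$. By Proposition \ref{WCWQ}, $\mathbb{L}^{\op{S}}_QF=\op{Ho}(F)\circ\op{Ho}(Q)$ satisfies $\bigl(\op{Ho}(F)\circ\op{Ho}(Q)\bigr)\circ\zg_{\tt M}=\zg_{\tt N}\circ F\circ Q$, which is exactly \eqref{U3}; and by \eqref{ExistKDF2}, $\op{Ho}(F\circ Q)\circ\zg_{\tt M}=\zg_{\tt N}\circ F\circ Q$ as well. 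Thus both functors, when pre-composed with $\zg_{\tt M}$, yield the same functor $\zg_{\tt N}\circ F\circ Q\in\tt Fun(M,Ho(N))$, which sends weak equivalences to isomorphisms and hence lies in $[{\tt M},{\tt Ho(N)}]_W$. Since $({\tt Ho(M)},\zg_{\tt M})$ is the strong localization ${\tt M}[[W^{-1}]]$ (Theorem \ref{StrLocWeaLoc} gives even more, that it is strict), Condition (L1') forces the factorization of $\zg_{\tt N}\circ F\circ Q$ through $\zg_{\tt M}$ to be unique on the nose; therefore $\op{Ho}(F)\circ\op{Ho}(Q)=\op{Ho}(F\circ Q)$.

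Putting the pieces together: $\mathbb{L}^{\op{S}}F\stackrel{\cong}{\Rightarrow}\mathbb{L}^{\op{S}}_QF$ by \eqref{NatIso3}, $\mathbb{L}^{\op{S}}_QF=\op{Ho}(F)\circ\op{Ho}(Q)$ by definition of the $\cI=\op{Ho}(Q)$ representative, and $\op{Ho}(F)\circ\op{Ho}(Q)=\op{Ho}(F\circ Q)$ by the argument above; this is precisely \eqref{CompDerFunE1}. I do not expect any real obstacle here: the statement is essentially a bookkeeping corollary of Proposition \ref{WCWQ} and the strongness of $({\tt Ho(M)},\zg_{\tt M})$. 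The only point that requires a moment's care is to make sure one invokes the \emph{uniqueness} clause of (L1') — i.e., that it is the strong (not merely faint or weak) universal property that is doing the work — since it is the on-the-nose equality \eqref{U3}, rather than a mere natural isomorphism, that lets us conclude the two factorizations literally coincide rather than being only isomorphic.
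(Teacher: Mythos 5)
Your proof is correct and follows essentially the same route as the paper: the paper derives \eqref{CompDerFunE1} directly from the definition \eqref{ExistKDF1}--\eqref{ExistKDF2} of $\op{Ho}(F\circ Q)$ and Equation \eqref{U3}, using the uniqueness part of the strong universal property (L1') to identify the two factorizations of $\zg_{\tt N}\circ F\circ Q$ through $\zg_{\tt M}$. Your remark that it is the on-the-nose uniqueness of (L1') (not mere faint universality) that lets the factorizations coincide literally rather than up to isomorphism is exactly the point the paper is implicitly exploiting.
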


\begin{rem} The results on S {\it derived functors of Quillen functors} are the same as in the case of {\small K} and {\small F} derived functors.\end{rem}

\subsubsection{Comparison theorems}\label{SSDerFunComp}

Above we used the faint localization $({\tt Ho}_{\op{K}}({\tt M}),\cL_{\tt M})$ of $\tt M$ at $W$ and considered K and F derived functors, and we used the strong localization $({\tt Ho}({\tt M}),\zg_{\tt M})$ of $\tt M$ at $W$ and looked at K and S derived functors.\medskip

We will show that in both cases the two derived functors under consideration are equal, and begin with the following refinement of Proposition \ref{ExistKDerFunK}.

\begin{theo}\label{FundamentalA}
If $F\in\tt Fun(M,N)$ sends weak equivalences between cofibrant objects to weak equivalences, the Kan extension derived functor $$\mathbb{L}^{\op{K}}F=\mathbf{L}^{\op{K}}(\cL_{\tt N}\circ F\circ i)=(\cL_{\tt M}\circ i)_*(\cL_{\tt N}\circ F\circ i)\in\tt Fun(Ho_{\op{K}}(M),Ho_{\op{K}}(N))$$ exists and is given by $$\mathbb{L}^{\op{F}}F=\mathbf{L}^{\op{F}}(\cL_{\tt N}\circ F\circ i)\;,$$ where $i:{\tt M}_{\op{c}}\hookrightarrow \tt M$ is the canonical inclusion.
\end{theo}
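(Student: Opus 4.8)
The plan is to show that the {\small F} derived functor $\mathbf{L}^{\op{F}}(\cL_{\tt N}\circ F\circ i)$, equipped with its defining natural isomorphism, \emph{already} solves the universal problem defining the right Kan extension $(\cL_{\tt M}\circ i)_*(\cL_{\tt N}\circ F\circ i)$; existence of the latter and the asserted equality then both drop out. The only real inputs are Corollary \ref{BasicCor}, which says that $({\tt Ho}_{\op{K}}({\tt M}),\cL_{\tt M}\circ i)$ is a weak (in particular faint) localization of ${\tt M}_{\op{c}}$ at $W$, and the weakness of that localization, i.e.\ property (L2'): these are exactly what is needed to upgrade a mere factorization-up-to-isomorphism to a Kan-type universal property.

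First I would set up the {\small F} derived functor. Since $F$ sends weak equivalences between cofibrant objects to weak equivalences, the composite $\cL_{\tt N}\circ F\circ i\in{\tt Fun}({\tt M}_{\op{c}},{\tt Ho}_{\op{K}}({\tt N}))$ sends the morphisms of $W$ to isomorphisms, i.e.\ it is an object of $[{\tt M}_{\op{c}},{\tt Ho}_{\op{K}}({\tt N})]_W$. By Corollary \ref{BasicCor} the pair $({\tt Ho}_{\op{K}}({\tt M}),\cL_{\tt M}\circ i)$ is a weak localization, hence by Proposition \ref{WFSS} a faint localization, of ${\tt M}_{\op{c}}$ at $W$. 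Applying its faint universal property to $(\,{\tt Ho}_{\op{K}}({\tt N}),\cL_{\tt N}\circ F\circ i\,)$ produces the functor $\tilde{\cF}:=\mathbf{L}^{\op{F}}(\cL_{\tt N}\circ F\circ i)=\mathbb{L}^{\op{F}}F$ together with a natural isomorphism $\imath:\tilde{\cF}\circ\cL_{\tt M}\circ i\stackrel{\cong}{\Rightarrow}\cL_{\tt N}\circ F\circ i$ as in \eqref{U2}, the pair $(\tilde{\cF},\imath)$ being unique up to unique natural isomorphism by Proposition \ref{EUFDerFun}.

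Next I would verify directly that $(\tilde{\cF},\imath)$ is a right Kan extension of $\cL_{\tt N}\circ F\circ i$ along $P:=\cL_{\tt M}\circ i$, with $\imath$ itself playing the role of the counit $\ze$ (this is legitimate because the natural transformation \eqref{U2} and the counit of a right Kan extension point in the same direction $\tilde{\cF}\circ P\Rightarrow\cL_{\tt N}\circ F\circ i$). So let $F'\in{\tt Fun}({\tt Ho}_{\op{K}}({\tt M}),{\tt Ho}_{\op{K}}({\tt N}))$ and let $\zz:F'\circ P\Rightarrow\cL_{\tt N}\circ F\circ i$ be arbitrary; I must exhibit a unique $\zy':F'\Rightarrow\tilde{\cF}$ with $\imath\circ(\zy'\star P)=\zz$. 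Since $\imath$ is invertible this equation is equivalent to $\zy'\star P=\imath^{-1}\circ\zz$, whose right-hand side is a natural transformation $F'\circ P\Rightarrow\tilde{\cF}\circ P$. Property (L2') of $({\tt Ho}_{\op{K}}({\tt M}),P)$ — the bijectivity \eqref{FulFai} of $-\circ P$ on ${\tt Fun}({\tt Ho}_{\op{K}}({\tt M}),{\tt Ho}_{\op{K}}({\tt N}))$ — then supplies a \emph{unique} $\zy':F'\Rightarrow\tilde{\cF}$ with $\zy'\star P=\imath^{-1}\circ\zz$, which is precisely what the universal property demands. Hence the universal pair $\bigl((\cL_{\tt M}\circ i)_*(\cL_{\tt N}\circ F\circ i),\ze\bigr)$ exists and may be taken to be $(\tilde{\cF},\imath)$, so $\mathbb{L}^{\op{K}}F=\tilde{\cF}=\mathbf{L}^{\op{F}}(\cL_{\tt N}\circ F\circ i)=\mathbb{L}^{\op{F}}F$.

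Finally I would note that this argument simultaneously proves Proposition \ref{ExistKDerFunK}: existence of the Kan extension has just been established, and its uniqueness up to unique natural isomorphism follows from the general uniqueness of right Kan extensions recalled in Subsection \ref{KFCatW} (or, equivalently, from the uniqueness of $(\tilde{\cF},\imath)$), so that no appeal to Brown's lemma is needed there. The only point demanding genuine care is the bookkeeping of $2$-categorical directions — checking that it is $\imath$, and not $\imath^{-1}$, that serves as the counit, and that (L2') is invoked for functors valued in ${\tt Ho}_{\op{K}}({\tt N})$, which is permissible exactly because (L2') is required to hold for \emph{every} target category. Beyond this there is no obstacle: the entire content of the theorem is that the weakness — as opposed to mere faintness — of the Kan localization turns the {\small F} derived functor into a right Kan extension.
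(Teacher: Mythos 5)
Your proof is correct and follows essentially the same route as the paper: take the F derived functor $\tilde{\cF}=\mathbb{L}^{\op{F}}F$ with its natural isomorphism $\imath$ from Proposition \ref{EUFDerFun}, reduce the Kan-extension universal property to solving $\zy'\star(\cL_{\tt M}\circ i)=\imath^{-1}\circ\zz$ (using invertibility of $\imath$), and then invoke the full faithfulness of $-\circ\cL_{\tt M}\circ i$ supplied by the weak-localization property of Corollary \ref{BasicCor}. The paper's proof is a more compressed version of exactly this argument.
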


The usually different K and F derived functors coincide here since in view of Corollary \ref{BasicCor} the pair $({\tt Ho}_{\op{K}}({\tt M}),\cL_{\tt M}\circ i)$ is not only a faint but a weak localization of $\,{\tt M}_{\op{c}}$ at $W$:

\begin{proof}
From Proposition \ref{EUFDerFun} it follows that $\mathbb{L}^{\op{F}}F\in\tt Fun(Ho_{\op{K}}(M),Ho_{\op{K}}(N))$ satisfies $$\imath:\mathbb{L}^{\op{F}} F\circ \cL_{\tt M}\circ i\stackrel{\cong}{\Rightarrow}\cL_{\tt N}\circ F\circ i\;.$$ Hence $\mathbb{L}^{\op{K}}F=\mathbb{L}^{\op{F}}F$ if for any functor $G\in\tt Fun(Ho_{\op{K}}(M),Ho_{\op{K}}(N))$ and any natural transformation $$\xi:G\circ\cL_{\tt M}\circ i\Rightarrow\cL_{\tt N}\circ F\circ i\;,$$ there is a unique natural transformation $\zz:G\Rightarrow \mathbb{L}^{\op{F}}F$ such that $$\imath\circ (\zz\star(\cL_{\tt M}\circ i))=\xi\;.$$ However, as $({\tt Ho}_{\op{K}}({\tt M}),\cL_{\tt M}\circ i)$ is a weak localization, the functor $-\circ\cL_{\tt M}\circ i$ is fully faithful, so that the natural transformation $$\imath^{-1}\circ\xi:G\circ\cL_{\tt M}\circ i\Rightarrow\mathbb{L}^{\op{F}}F\circ\cL_{\tt M}\circ i$$ reads $\zz\star(\cL_{\tt M}\circ i)$ for a unique $\zz:G\Rightarrow\mathbb{L}^{\op{F}}F\,.$
\end{proof}

We finally prove Proposition \ref{ExistKDerFun}. More precisely, we show that if $F\in\tt Fun(M,N)$ sends weak equivalences between cofibrant objects to weak equivalences, the Kan extension derived functor $$\mathbb{L}^{\op{K}}F\in\tt Fun(Ho(M),Ho(N))$$ exists and is given by $\mathbb{L}_Q^{\op{S}}F=\op{Ho}(F\circ Q)$ and by $\op{Ho}(F\circ\tilde{C})\,,$ where $Q$ and $\tilde{C}$ are defined as usual.

\begin{proof}[Proof of Proposition \ref{ExistKDerFun}]
Let $\tilde{C}$ be a {\it local cofibrant F-replacement} as in Subsection \ref{Rep} and recall (see proof of Theorem \ref{KanWeak}) that $\tilde{C}$ is an endofunctor of $\tt M$ up to left homotopy, in the sense that its value at an $\tt M$-morphism $f:X\to Y$ is well-defined only up to left homotopy and therefore it respects compositions and identities only up to left homotopy. Nevertheless $$\cC:=\zg_{\tt N}\circ F\circ\tilde{C}$$ is a well-defined functor from $\tt M$ to $\tt Ho(N)\,$. Indeed, let $T_1:=\tilde{C}_1f$ and $T_2:=\tilde{C}_2f$ be two different liftings. The fact that these $\tt M$-morphisms from $A:=\tilde{C}X\in{\tt M}_{\op{c}}$ to $B:=\tilde{C}Y\in{\tt M}_{\op{c}}$ are left homotopic means that $T_1\amalg T_2:A\amalg A\to B$ factors through a cylinder object $\op{Cyl}(A)\,,$ i.e., means that there is a factorization \be\label{WDA}w\circ i_1:=w\circ i\circ \zf_1=\id_A\quad\text{and}\quad w\circ i_2:=w\circ i\circ\zf_2=\id_A\;,\ee where $\zf_1,\zf_2:A\to A\amalg A\,,$ $i:A\amalg A\rightarrowtail\op{Cyl}(A)$ and $w:\op{Cyl}(A)\stackrel{\sim}{\to} A\,,$ as well as a factorization \be\label{WDB}H\circ i_1=T_1\quad\text{and}\quad H\circ i_2=T_2\;,\ee where $H:\op{Cyl}(A)\to B$ (see proof of Theorem \ref{KanWeak}). As the cylinder of a cofibrant object is cofibrant (see proof of Corollary \ref{BasicCor}), we get from $w\circ i_1=\id_A$ that $i_1:A\to\op{Cyl}(A)$ is a weak equivalence between cofibrant objects. If we apply $\zg_{\tt N}\circ F\in\tt Fun(M,Ho(N))$ to \eqref{WDA} and remember that $\zg_{\tt N}(F(i_1))$ is an isomorphism, we see that $\zg_{\tt N}(F(w))$ is the inverse isomorphism and that $\zg_{\tt N}(F(i_1))=\zg_{\tt N}(F(i_2))\,.$ Hence it follows from \eqref{WDB} that $$\zg_{\tt N}(F(\tilde{C}_1f))=\zg_{\tt N}(F(T_1))=\zg_{\tt N}(F(T_2))=\zg_{\tt N}(F(\tilde{C}_2f))\;,$$ so that $\cC f:=\zg_{\tt N}(F(\tilde{C}f))$ is well-defined. Moreover it is now easy to check that $\cC$ respects compositions and identities and is therefore a genuine functor $\cC\in\tt Fun(M,Ho(N))\,$. Finally the diagram \eqref{CTilde} allow us to see that $\tilde{C}f$ is a weak equivalence between cofibrant objects if $f$ is a weak equivalence, so that $\cC$ sends weak equivalences to isomorphisms and factors uniquely through $\tt Ho(M):$ there is a unique functor $\op{Ho}(\cC)$ or \be\label{ExistKDF3}\op{Ho}(F\circ\tilde{C})\in\tt Fun(Ho(M),Ho(N))\;,\ee such that \be\label{ExistKDF4}\op{Ho}(F\circ\tilde{C})\circ \zg_{\tt M}=\zg_{\tt N}\circ F\circ \tilde{C}\;.\ee

From Equations \eqref{ExistKDF1} and \eqref{ExistKDF2} we know that if $Q$ is a {\it cofibrant F-replacement functor}, the functor \be\label{ExistKDF5}\mathbb{L}_Q^{\op{S}}F=\op{Ho}(F\circ Q)\in\tt Fun(Ho(M),Ho(N))\;\ee satisfies the commutation relation \be\label{ExistKDF6}\op{Ho}(F\circ Q)\circ \zg_{\tt M}=\zg_{\tt N}\circ F\circ Q\;.\ee

In the following $\cQ$ denotes both, the replacement $\tilde{C}$ and the replacement $Q\,.$ We now show that $\op{Ho}(F):=\op{Ho}(F\circ\cQ)$ is the right Kan extension $\mathbb{L}^{\op{K}}F$ of $\zg_{\tt N}\circ F$ along $\zg_{\tt M}\,.$\medskip

First we construct a natural transformation $$\ze:\op{Ho}(F)\circ\zg_{\tt M}\Rightarrow\zg_{\tt N}\circ F\;,$$ i.e., a family $\ze_X\,,$ $X\in\tt M\,,$ of $\tt Ho(N)$-maps $$\ze_X:F(\cQ X)\to F(X)$$ that is natural in $X\,.$ Denoting the trivial fibration $$c_X:\tilde{C}X\stackrel{\sim}{\twoheadrightarrow}X\quad\text{or}\quad q_X:QX\stackrel{\sim}{\twoheadrightarrow}X\;$$ by $$\zvf_X:\cQ X\stackrel{\sim}{\twoheadrightarrow}X\;,$$ we get a $\tt Ho(N)$-map $$\zg_{\tt N}(F(\zvf_X)):F(\cQ X)\to F(X)\;$$ and set \be\label{Defze}\ze_X:=\zg_{\tt N}(F(\zvf_X))\;.\ee From the commutation of the lower triangle in \eqref{CTilde} and the naturality of $q$ it follows that \be\label{Natzvf}\zvf_Y\circ\cQ f=f\circ\zvf_X\;,\ee so that the transformation $\ze$ is natural.\medskip

It remains to prove that the pair $(\op{Ho}(F),\ze)$ is universal. Let $H\in\tt Fun(Ho(M),Ho(N))$ and $\zh:H\circ\zg_{\tt M}\Rightarrow\zg_{\tt N}\circ F$ be another such pair. We will show that there is a unique natural transformation $\zk:H\Rightarrow\op{Ho}(F)$ such that $\zh=\ze\circ(\zk\star\zg_{\tt M})\,.$\medskip

In order to define $X$-natural $\tt Ho(N)$-maps $$\zk_{X}:H(X)\to \op{Ho}(F)(X)\,,\quad\text{where}\quad \op{Ho}(F)(X)=F(\cQ X)\;$$ such that $\zh_X=\zg_{\tt N}(F(\zvf_X))\,\circ\,\zk_X\,,$ we consider the $X$-natural $\tt Ho(N)$-maps $$\zh_X:H(X)\to F(X)\;$$ and their naturality square for $\zvf_X:\mathcal{Q}X\to X\,:$

\begin{equation}
\begin{tikzpicture}
 \matrix (m) [matrix of math nodes, row sep=3em, column sep=3em]
   {H(\cQ X) & & F(\cQ X)   \\
    H(X) & & F(X) \\};
 \path[->]
 (m-1-1) edge [->] node[auto] {\small{$\;\;\zh_{\cQ X}$}} (m-1-3)
 (m-1-1) edge [->] node[left] {\small{$H(\zg_{\tt M}(\zvf_X))$}} (m-2-1)
 (m-2-1) edge [->] node[auto] {\small{$\;\;\zh_X$}} (m-2-3)
 (m-1-3) edge [->] node[right] {\small{$\zg_{\tt N}(F(\zvf_X))$}}(m-2-3)
 (m-2-1) edge [->, dashed] node[auto] {\small{$\zk_{X}$}} (m-1-3);
\end{tikzpicture}
\end{equation}
Since $\zvf_X$ is a weak equivalence, the map $H(\zg_{\tt M}(\zvf_X))$ is an isomorphism and we can define $\zk_{X}$ by \be\label{Defzk}\zk_{X}:=\zh_{\cQ X}\circ (H(\zg_{\tt M}(\zvf_X)))^{-1}\;.\ee Of course, since the square commutes, the lower triangle also commutes, which means that $\zh=\ze\circ(\zk\star\zg_{\tt M})$ as already mentioned above.\medskip

In view of Lemma \ref{NatTraMHoM}, the family \eqref{Defzk} of $\tt Ho(N)$-maps defines a natural transformation $\zk:H\Rightarrow \op{Ho}(F)$ if and only if it defines a natural transformation $$\zk:H\circ\zg_{\tt M}\Rightarrow \op{Ho}(F)\circ\zg_{\tt M}\;,\quad\text{where}\quad \op{Ho}(F)\circ \zg_{\tt M}=\zg_{\tt N}\circ F\circ \cQ\;.$$ Let $f:X\to Y$ be an $\tt M$-morphism. If we apply the naturality of the transformation $\zh:H\circ\zg_{\tt M}\Rightarrow\zg_{\tt N}\circ F$ to the morphism $\cQ f:\cQ X\to\cQ Y\,,$ we get $$\op{Ho}(F)(\zg_{\tt M}f)\circ\zk_X=\zg_{\tt N}(F(\cQ f))\circ \zh_{\cQ X}\circ (H(\zg_{\tt M}(\zvf_X)))^{-1}=\zh_{\cQ Y}\circ H(\zg_{\tt M}(\cQ f))\circ (H(\zg_{\tt M}(\zvf_X)))^{-1}\;.$$ Since the commutation relation \eqref{Natzvf} gives $$H(\zg_{\tt M}(\zvf_Y))\circ H(\zg_{\tt M}(\cQ f))=H(\zg_{\tt M}f)\circ H(\zg_{\tt M}(\zvf_X))\;,$$ we finally find that \be\label{Natzk}\op{Ho}(F)(\zg_{\tt M}f)\circ\zk_X=\zk_Y\circ H(\zg_{\tt M}f)\;.\ee

We still have to show that $\zk$ is unique. If $\zk:H\Rightarrow\op{Ho}(F)$ such that $\zh_X=\ze_X\circ\zk_X=\zg_{\tt N}(F(\zvf_X))\circ\zk_X$ exists and if $X\in\tt M_{\op{c}}\,,$ then $\zg_{\tt N}(F(\zvf_X))$ is an isomorphism and $\zk_X$ is necessarily given by \be\label{zkCof}\zk_X=(\zg_{\tt N}(F(\zvf_X)))^{-1}\circ\zh_X=\zh_{\cQ X}\circ (H(\zg_{\tt M}(\zvf_X)))^{-1}\;.\ee If $X\notin{\tt M}_{\op{c}}\,,$ Equation \eqref{Natzk} applied to $$(f:X\to Y)=(\zvf_X:\cQ X\to X)$$ gives $$\zk_X=\zg_{\tt N}(F(\cQ\zvf_X))\circ\zk_{\cQ X}\circ (H(\zg_{\tt M}(\zvf_X)))^{-1}\;.$$ Equation \eqref{zkCof} applied to $\cQ X$ leads to \be\label{Uni1}\zk_X=\zg_{\tt N}(F(\cQ\zvf_X))\circ\zh_{\cQ(\cQ X)}\circ (H(\zg_{\tt M}(\zvf_{\cQ X})))^{-1}\circ (H(\zg_{\tt M}(\zvf_X)))^{-1}\ee and the naturality equation of $\zh$ applied to $$(f:X\to Y)=(\cQ\zvf_X:\cQ(\cQ X)\to \cQ X)$$ shows that \be\label{Uni2}\zg_{\tt N}(F(\cQ\zvf_X))\circ \zh_{\cQ(\cQ X)}=\zh_{\cQ X}\circ H(\zg_{\tt M}(\cQ\zvf_X))\;.\ee From \eqref{Natzvf} applied to $$(f:X\to Y)=(\zvf_X:\cQ X\to X)$$ it follows that $$H(\zg_{\tt M}(\zvf_X))\circ H(\zg_{\tt M}(\cQ\zvf_X))=H(\zg_{\tt M}(\zvf_X))\circ H(\zg_{\tt M}(\zvf_{\cQ X}))\;,$$ so that \be\label{Uni3}H(\zg_{\tt M}(\cQ\zvf_X))=H(\zg_{\tt M}(\zvf_{\cQ X}))\;.\ee If we combine \eqref{Uni1}, \eqref{Uni2} and \eqref{Uni3}, we finally get that $$\zk_X=\zh_{\cQ X}\circ (H(\zg_{\tt M}(\zvf_X)))^{-1}\;,$$ which proves that $\zk$ is unique.
\end{proof}

The previous proof allows us to complete Proposition \ref{ExistKDerFun} as follows:

\begin{theo}\label{Fundamental0}
If $F\in\tt Fun(M,N)$ sends weak equivalences between cofibrant objects to weak equivalences, the Kan extension derived functor $$\mathbb{L}^{\op{K}}F\in\tt Fun(Ho(M),Ho(N))$$ exists and is given by $\op{Ho}(F\circ\tilde{C})$ and by $$\mathbb{L}_Q^{\op{S}}F=\op{Ho}(F\circ Q)\;,$$ where $\tilde{C}$ is a local cofibrant F-replacement and $Q$ is a cofibrant F-replacement functor. This means that \be\label{Fundamental1}\mathbb{L}^{\op{K}}F=\op{Ho}(F\circ\tilde{C})=\mathbb{L}_Q^{\op{S}}F=\op{Ho}(F\circ Q)\;\ee and implies that \be\label{Fundamental2}\mathbb{L}^{\op{K}}F\circ\zg_{\tt M}=\zg_{\tt N}\circ F\circ\tilde{C}=\mathbb{L}_Q^{\op{S}}F\circ\zg_{\tt M}=\zg_{\tt N}\circ F\circ Q\;.\ee
\end{theo}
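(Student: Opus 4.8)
The plan is to harvest the statement directly from the proof of Proposition~\ref{ExistKDerFun} carried out above, which already contains all the substance. In that proof the symbol $\cQ$ ranged over \emph{both} the local cofibrant F-replacement $\tilde{C}$ \emph{and} the cofibrant F-replacement functor $Q$, and it was shown, uniformly in $\cQ$, that the functor $\op{Ho}(F\circ\cQ)\in\tt Fun(Ho(M),Ho(N))$ together with the natural transformation $\ze$ whose component at $X$ is $\ze_X=\zg_{\tt N}(F(\zvf_X))$ forms a universal pair, i.e.\ that $\op{Ho}(F\circ\cQ)=(\zg_{\tt M})_*(\zg_{\tt N}\circ F)=\mathbb{L}^{\op{K}}F$ is the right Kan extension. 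Hence $\mathbb{L}^{\op{K}}F$ exists, and specializing $\cQ$ gives both $\mathbb{L}^{\op{K}}F=\op{Ho}(F\circ\tilde{C})$ and $\mathbb{L}^{\op{K}}F=\op{Ho}(F\circ Q)$; here the equality signs are legitimate because a right Kan extension is unique up to unique natural isomorphism and, following the convention of the Warning in the introduction, objects linked by a canonical isomorphism are identified.

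Next I would bring in the S derived functor. By \eqref{ExistKDF5}--\eqref{ExistKDF6}, or equivalently by Equation~\eqref{CompDerFunE1} of Proposition~\ref{CompDerFunP1}, one has $\mathbb{L}_Q^{\op{S}}F=\op{Ho}(F)\circ\op{Ho}(Q)=\op{Ho}(F\circ Q)$. Combining this with the preceding paragraph produces the chain $\mathbb{L}^{\op{K}}F=\op{Ho}(F\circ\tilde{C})=\mathbb{L}_Q^{\op{S}}F=\op{Ho}(F\circ Q)$, which is exactly \eqref{Fundamental1}.

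Finally, to get \eqref{Fundamental2} I would whisker the identities of \eqref{Fundamental1} on the right with the localization functor $\zg_{\tt M}$. By construction the unique factorizations $\op{Ho}(F\circ\tilde{C})$ and $\op{Ho}(F\circ Q)$ satisfy the commutation relations \eqref{ExistKDF4} and \eqref{ExistKDF6}, namely $\op{Ho}(F\circ\tilde{C})\circ\zg_{\tt M}=\zg_{\tt N}\circ F\circ\tilde{C}$ and $\op{Ho}(F\circ Q)\circ\zg_{\tt M}=\zg_{\tt N}\circ F\circ Q$; replacing these factorizations by $\mathbb{L}^{\op{K}}F$ and $\mathbb{L}_Q^{\op{S}}F$ via \eqref{Fundamental1} yields the four-term equality \eqref{Fundamental2}.

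I do not expect a genuine obstacle here, since the analytic content is entirely contained in the proof of Proposition~\ref{ExistKDerFun}. The one point that deserves a word of care is that $\op{Ho}(F\circ\tilde{C})$ and $\op{Ho}(F\circ Q)$ are being identified \emph{as right Kan extensions}: the canonical natural isomorphism between them must be the one compatible with the two counits $\ze$, and this compatibility is precisely the uniqueness clause of the universal property that was verified in that proof, so nothing further has to be checked.
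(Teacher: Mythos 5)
Your proposal is correct and mirrors the paper's own treatment exactly: the paper states Theorem \ref{Fundamental0} with the preamble ``The previous proof allows us to complete Proposition \ref{ExistKDerFun} as follows,'' relying, just as you do, on the fact that the proof was carried out uniformly in $\cQ\in\{\tilde{C},Q\}$, and on the identifications \eqref{ExistKDF4}, \eqref{ExistKDF6} and \eqref{CompDerFunE1}. Your closing remark about the equalities being canonical identifications of naturally isomorphic Kan extensions is also the right reading of the paper's Warning.
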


If we denote \be\label{ompDerFunE2}\mathfrak{i}:\mathbb{L}^{\op{S}}F\stackrel{\cong}{\Rightarrow}\mathbb{L}^{\op{S}}_QF=\op{Ho}(F\circ Q)\ee the natural transformation \eqref{CompDerFunE1} in Proposition \ref{CompDerFunP1}, Equation \eqref{ompDerFunE2} and Equation \eqref{Fundamental2} imply that for every $X\in\tt M\,,$ the value at $\zg_{\tt M}X=X\in\tt Ho(M)$ of the derived functor is $$\mathbb{L}^{\op{S}}F(X)\stackrel{\mathfrak{i}_X}{\cong}\mathbb{L}_Q^{\op{S}}F(X)=\mathbb{L}^{\op{K}}F(X)=\zg_{\tt N}(F(Q
X))=\zg_{\tt N}(F(\tilde{C}X))=F(QX)=F(\tilde{C}X)\;.$$ Further, for every $f\in\op{Hom}_{\tt M}(X,Y)\,,$ the value at $\zg_{\tt M}f\in\op{Hom}_{\tt Ho(M)}(X,Y)$ of the derived functor is
$$\mathbb{L}_Q^{\op{S}}F(\zg_{\tt M}f)=\mathbb{L}^{\op{K}}F(\zg_{\tt M}f)=\zg_{\tt N}(F(Q f))=\zg_{\tt N}(F(\tilde{C}f))\;$$ and $$\mathbb{L}^{\op{S}}F(\zg_{\tt M}f)=\mathfrak{i}^{-1}_Y\circ\mathbb{L}_Q^{\op{S}}F(\zg_{\tt M}f)\circ\mathfrak{i}_X\;.$$

\section{Future directions}\label{FutDir}

Since the discovery of general relativity, the prevailing tendency in mathematics has again been to favor coordinate-independent approaches to problems, as was inevitable in the pre-Descartes era. In particular, the Vinogradov school proposed a coordinate-free cohomological analysis of partial differential equations (PDE-s) \cite{Vino}, an endeavor also promoted in the setting of algebraic geometry by Beilinson and Drinfeld \cite{BD04}. Other authors, e.g., Costello and Gwilliam \cite{CG1}, Schreiber... have investigated a covariant Batalin-Vilkovisky (BV) formalism for gauge field theories. In a series of papers \cite{KTRCR, HAC, PP}, Di Brino and two of the authors of the present article have proposed a generalization to differential operators $\cD$ of homotopical algebraic geometry in the sense of \cite{TV05, TV08} as a suitable framework for the moduli space of solutions of a system of PDE-s modulo symmetries. Indeed, the new geometry in particular provides a convenient method of encoding total derivatives and leads to a covariant description of the classical BV complex which arises as a specific case of general constructions. Further evidence for this standpoint appears in \cite{P11, Paugam1}.\medskip

The mathematically rigorous implementation of the previous ideas requires that the tuple $({\tt DG\cD M, DG\cD M, DG\cD A}, \tau, \mathbf{P})$ be a homotopical algebraic geometric context (HAGC) in the sense of \cite{TV08}. Here $\tt DG\cD M$ is the symmetric monoidal model category of differential graded $\cD$-modules, the subcategory $\tt DG\cD A$ is the model category of differential graded $\cD$-algebras, $\zt$ is a suitable model pre-topology on the opposite category of $\tt DG\cD A$ and $\mathbf{P}$ is a compatible class of morphisms. We expect the proof of the HAGC theorem to be based on a generalization of the concept of homotopy fiber sequence and of Puppe's long exact sequence. In this derived setting, different types of derived functors on model categories are used and need to be compared. We are convinced that the present paper enables us to prove the HAGC theorem and thus to take an important step towards the full implementation of the program described above.

{}
\vfill
{\emph{email:} {\sf alisa.govzmann@uni.lu}; \emph{email:} {\sf damjan.pistalo@uni.lu}; \emph{email:} {\sf norbert.poncin@uni.lu}.}

\end{document}